\theoremstyle{plain}
\newtheorem{theorem}{Theorem}[section]
\newtheorem{lemma}[theorem]{Lemma}
\newtheorem{proposition}[theorem]{Proposition}
\newtheorem{corollary}[theorem]{Corollary}
\theoremstyle{definition}
\newtheorem{definition}[theorem]{Definition}
\newtheorem{example}[theorem]{Example}
\theoremstyle{definition}
\newtheorem{remark}[theorem]{Remark}
\newtheorem*{theorem*}{Theorem (Main result)}
\numberwithin{equation}{section}
\numberwithin{figure}{section}
\newcommand{\hmu}{\hat{\mu}}
\newcommand{\rvlaw}[1][]{  {\mathscr{L}}{{(#1)}}  }
\newcommand{\ld}[1][]{ \frac{\delta{#1}}{\delta m}  }
\DeclarePairedDelimiterX{\inp}[2]{\langle}{\rangle}{#1, #2}
\newcommand{\intrd}[1][]{ \int_{\bT^d}  }
\newcommand{\vone}{v^{(1)}}
\newcommand{\regb}[1][]{(\textcolor{red}{\text{Int}-$b$-(${#1,1}$)})}
\newcommand{\lipb}[1][]{(\textcolor{red}{\text{Lip}-$b$-(${#1,1}$)})}
\newcommand{\regtphi}[1][]{(\textcolor{red}{\text{TReg}-$\widetilde{\Phi}$-(${#1},1$)})}
\newcommand{\hregb}[2]{(\textcolor{red}{\text{Int}-$b$-(${#1,#2}$)})}
\newcommand{\hintb}[2]{(\textcolor{red}{\text{Int}-$b$-(${#1,#2}$)})}
\newcommand{\hlipb}[2]{(\textcolor{red}{\text{Lip}-$b$-(${#1,#2}$)})}
\newcommand{\hregphi}[2]{(\textcolor{red}{\text{TReg}-$\Phi$-(${#1,#2}$)})}
\newcommand{\hintphi}[2]{(\textcolor{red}{\text{TInt}-$\Phi$-(${#1,#2}$)})}
\newcommand{\hlipphi}[1][]{(\textcolor{red}{\text{TLip}-$\Phi$-(${#1}$)})}
\newcommand{\hliptphi}{(\textcolor{red}{\text{TLip}-${\widetilde{\Phi}-(1)}$)}}
\newcommand{\na}{n,\infty}
\newcommand{\bE}{\mathbb{E}}
\newcommand{\bN}{\mathbb{N}}
\newcommand{\bP}{\mathbb{P}}
\newcommand{\bT}{\mathbb{T}}
\newcommand{\bR}{\mathbb{R}}
\newcommand{\bZ}{\mathbb{Z}}
\newcommand{\cF}{\mathcal{F}}
\newcommand{\cP}{\mathcal{P}}
\newcommand{\cU}{\mathcal{U}}
\newcommand{\cV}{\mathcal{V}}
\def \eps {\epsilon}
   \def\m{\mu}
\newcommand{\lev}{\left\langle}
\newcommand{\rev}{\right\rangle}
\title{Higher order regularity of nonlinear Fokker-Planck PDEs with respect to the measure component}
\author{Alvin Tse
  \thanks{This research benefited from the support of the ``Chaire Risques Financiers'', Fondation du Risque. \\ \,  \,  Corresponding e-mail: alvin.tse@enpc.fr
  }}
\affil{Universit\'{e} Paris-Est, Cermics (ENPC), INRIA, F-77455 Marne-la-Vall\'{e}e, France}
\date{}
\begin{document}	
\selectlanguage{english}
\maketitle

\begin{abstract}
In this article, we establish a general formula for higher order linear functional derivatives for the composition of an arbitrary smooth functional on the 1-Wasserstein space with the solution of a Fokker-Planck PDE. This formula has important links with the theory of propagation of chaos and mean-field games. \\ ${}$ \\
\begin{center}
 \textbf{R\'{e}sum\'{e}} \\
\end{center}
Dans cet article, nous établissons une formule générale pour les dérivées fonctionnelles linéaires d'ordre supérieur pour la composition d'une fonctionnelle régulière arbitraire sur l'espace 1-Wasserstein avec la solution d'une EDP de Fokker-Planck. Cette formule a des liens importants avec la théorie de la propagation du chaos et des jeux à champ moyen. \\
\end{abstract}

\textbf{Keywords:} Fokker-Planck PDEs, Linear functional derivatives, Propagation of chaos \\

{\bf 2010 AMS subject classifications:} 	35R06,  	60H30, 	65C35


\section{Introduction}
Let $\cP(\bT^d)$ denote the 1-Wasserstein space of probability measures on $\bT^d$, where $\bT^d:= \bR^d/\bZ^d$ denotes the $d$-dimensional torus. In this paper, we consider nonlinear Fokker-Planck PDEs of the form
\begin{equation} \begin{cases}
     \partial_t m + \text{div} ( b(\cdot,m) m) - \Delta m = 0, \quad \quad t \in [0,T], \\
             m(0,\mu) = \mu,
    \end{cases} \label{eq: forward eqn }  \end{equation}
    for some function $b: \bT^d \times \cP(\bT^d) \to \bR^d$ and probability measure $\mu \in \cP(\bT^d)$. This type of equations has been a rich area of research in the last decades. The case in which $b$ does not depend on $m$ has been treated in most classical works, such as Chapter 6 of \cite{bogachev2015fokker}.  In \cite{barbu2018nonlinear}, this type of equations is considered to construct weak solutions to a class of distribution-dependent SDEs. The case corresponding to probability measures on the path space is considered in \cite{huang2017nonlinear}.
    
    Let $\Phi: \cP( \bT^d) \to \bR$ be a continuous function (w.r.t. the topology of  $\cP( \bT^d)$). This paper explores the smoothness w.r.t. the measure  component for function $\cU: [0,T] \times \cP( \bT^d) \to \bR$ defined by 
\begin{equation}
\cU(t, \mu) := \Phi(m(t, \mu)), \label{eq: defofflow}
\end{equation} 
under sufficient regularity of $b$ and $\Phi$. The notion of smoothness that we consider, i.e. the linear functional derivative,  is widely adopted in the literature of McKean-Vlasov equations and mean-field games, such as \cite{cardaliaguet2015master}, \cite{carmona2017probabilistic} and \cite{crisan2017smoothing}. A continuous function (w.r.t. the product topology of $\cP( \bT^d) \times \bT^d$) $\frac{\delta \cV}{\delta m}: \cP(\bT^d) \times \bT^d \to \bR$ is said to be the \emph{linear functional derivative} of $\cV: \cP(\bT^d) \to \bR$, if  for any $m, m' \in \cP(\bT^d)$,
 \begin{align}\label{eq de first order deriv}
\cV(m')- \cV(m) = \int_0^1 \int_{\bT^d} \frac{\delta \cV}{\delta m}( (1-s)m + sm',y) \, (m'-m)(dy) \, ds. 
 \end{align}
 We then introduce higher-order derivatives through iteration:
for any $m, m' \in \cP(\bT^d)$ and $y \in (\bT^d)^{p-1}$,
\begin{align} \label{eq de higher order deriv}
 \frac{\delta^{p-1} \cV}{\delta m^{p-1}}(m',y) - \frac{\delta^{p-1} \cV}{\delta m^{p-1}}(m,y) = \int_0^1 \int_{\bT^{d}} \frac{\delta^p \cV}{\delta m^p}((1-s)m +sm',y,y') \, (m'-m)( d y') \,d s,
\end{align}
provided that the $(p-1)$-th order derivative is well defined. These derivatives are defined up to an additive constant via \eqref{eq de first order deriv} and \eqref{eq de higher order deriv}. They are normalised by the convention 
\begin{equation}
    \intrd   \frac{\delta^p \cV}{\delta m^p}(m,y_1, \ldots, y_p) \, m(dy_i) = 0, \quad  i \in \{1, \ldots, p \}.
    \label{eq: normalisation linear functional deriatives}
\end{equation} 
The main result of this paper is Theorem \ref{main result}. The definitions of the assumptions are found in Section \ref{section assumptions}. The definitions of the higher-order Kolmogorov equations $m^{(\beta)}$ and the multi-indices $\Lambda \in e(\Lambda_k)$ can be found in \eqref{PDE linearised k} and Definitions \ref{multi indices delta k}- \ref{ def  Lambda k} respectively. 
\begin{theorem*}  
\emph{Let  $k \in \bN$. Assume \emph{\hregb{k+2}{k}},  \emph{\hlipb{k+1}{k}}, \emph{\hlipphi[k]} and \emph{\hregphi{k+2}{k}}.  
Then $\frac{\delta^k \cU}{\delta m^k}$ exists and is given by}
\begin{eqnarray}
     &&    \frac{\delta^k \cU}{\delta m^k} ( t,\mu)(z_1, \ldots , z_k)  \nonumber \\
     & = &  \sum_{\Lambda = \big( \hat{n}, (\beta_j), (\alpha_{i,j}) \big) \in e(\Lambda_k)} \bigg[ \frac{ \delta^{{{\hat{n}}}} \Phi}{ \delta m^{{{\hat{n}}}}}( m(t, \mu)) \bigg( m^{(\beta_1)} \Big(t, \mu, \delta_{z_{\alpha_{1,1}}}, \ldots, \delta_{z_{\alpha_{1,\beta_1}}} \Big), \ldots, m^{(\beta_{{\hat{n}}})} \Big(t, \mu, \delta_{z_{\alpha_{{\hat{n}},1}}}, \ldots, \delta_{z_{\alpha_{{\hat{n}},\beta_{\hat{n}}}}} \Big) \bigg) \bigg].   \nonumber \\
     && \label{k order full formula} 
       \end{eqnarray}
            \emph{In particular, if we also assume \emph{\hintphi{k+1}{k}}, then}
$$\sup_{z_1, \ldots, z_k \in \bT^d}  \sup_{\mu \in \cP(\bT^d)} \sup_{t \in [0,T]} \bigg|  \frac{\delta^k \cU}{\delta m^k} ( t,\mu)(z_1, \ldots , z_k) \bigg| <+ \infty. $$
\end{theorem*}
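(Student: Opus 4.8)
Since the formula \eqref{k order full formula} is proved in the first part of the theorem, the task reduces to extracting the uniform bound from it. The sum on its right-hand side runs over the \emph{finite} set $e(\Lambda_k)$, and by the definition of this set every $\Lambda = (\hat n, (\beta_j), (\alpha_{i,j}))$ appearing in it satisfies $\hat n \le k$ and $\beta_1 + \cdots + \beta_{\hat n} = k$, so in particular $1 \le \beta_j \le k$ for each $j$. Hence only the derivatives $\frac{\delta^{\hat n}\Phi}{\delta m^{\hat n}}$ with $\hat n \le k$ and the higher-order Kolmogorov flows $m^{(\beta)}$ with $\beta \le k$ enter the formula, and it suffices to bound one generic summand uniformly in $(t,\mu,z_1,\dots,z_k) \in [0,T]\times\cP(\bT^d)\times(\bT^d)^k$.

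The plan is a duality estimate for such a summand. Reading the action of $\frac{\delta^{\hat n}\Phi}{\delta m^{\hat n}}(m(t,\mu))$ on the tuple $\big(m^{(\beta_j)}(t,\mu,\delta_{z_{\alpha_{j,1}}},\dots,\delta_{z_{\alpha_{j,\beta_j}}})\big)_{1\le j\le \hat n}$ as an iterated pairing over its $\hat n$ point variables, and peeling off one variable at a time, one obtains
\[
\bigg|\frac{\delta^{\hat n}\Phi}{\delta m^{\hat n}}(m(t,\mu))\Big(m^{(\beta_1)}(t,\mu,\cdot),\dots,m^{(\beta_{\hat n})}(t,\mu,\cdot)\Big)\bigg| \;\le\; \bigg\|\frac{\delta^{\hat n}\Phi}{\delta m^{\hat n}}\bigg\|_{\star}\;\prod_{j=1}^{\hat n}\Big\|m^{(\beta_j)}\big(t,\mu,\delta_{z_{\alpha_{j,1}}},\dots,\delta_{z_{\alpha_{j,\beta_j}}}\big)\Big\|_{(\cna[k+1])'},
\]
where $\|\cdot\|_\star$ denotes the supremum over the measure argument of the $W^{k+1,\infty}(\bT^d)$-norm taken in each of the $\hat n$ point variables. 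The role of the extra hypothesis \hintphi{k+1}{k} is precisely to make $\|\frac{\delta^{\hat n}\Phi}{\delta m^{\hat n}}\|_\star$ finite for all $\hat n \le k$; since $m(t,\mu)\in\cP(\bT^d)$, the first factor above is thus bounded by a constant independent of $(t,\mu)$.

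For the second factor I would invoke the a priori bounds on the solutions $m^{(\beta)}$ of the linearised equations \eqref{PDE linearised k} that are established, under \hregb{k+2}{k} and \hlipb{k+1}{k}, in the course of proving the first part of the theorem: for every $\beta \le k$ these control the pairing of $m^{(\beta)}(t,\mu,\delta_{z_1},\dots,\delta_{z_\beta})$ against test functions of class $W^{k+1,\infty}(\bT^d)$ --- that is, $\big\|m^{(\beta)}(t,\mu,\delta_{z_1},\dots,\delta_{z_\beta})\big\|_{(\cna[k+1])'}$ --- uniformly in $t\in[0,T]$, $\mu\in\cP(\bT^d)$ and $z_1,\dots,z_\beta\in\bT^d$, the derivatives lost because the data are Dirac masses being recovered from the parabolic smoothing of the equation. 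Plugging these two families of estimates into the bound above for each of the finitely many summands yields the claimed finiteness of the supremum.

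The main obstacle is the matching of the two regularity scales: one has to verify that the order of (negative) regularity one can afford for $m^{(\beta)}$ driven by Dirac data is no larger than the order of smoothness that \hintphi{k+1}{k} provides for $\frac{\delta^{\hat n}\Phi}{\delta m^{\hat n}}$, and that this stays compatible simultaneously across all index configurations $(\hat n,(\beta_j))$ occurring in $e(\Lambda_k)$ --- this is exactly what the calibration of the shifts $k+1$ and $k+2$ in the assumptions is designed to ensure. Once this compatibility is checked, what remains is a routine finite summation of uniform bounds.
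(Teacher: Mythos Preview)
Your proposal addresses only the ``In particular'' clause --- the uniform bound --- and treats the formula \eqref{k order full formula} as already established. But the formula is the substance of the theorem; in the paper its proof (Theorem~\ref{main result}) is an induction on $k$ that relies on the whole machinery built in Sections~2--3: the first-order differentiability result (Theorem~\ref{thm m m-one} and Corollary~\ref{Corr existence first order}), the linearisation estimates for $m^{(k)}$ (Theorems~\ref{ thm mk lipschitz} and~\ref{ mk mk+1 pertub}), the backward equations and the representation of integrals against $m^{(k)}$ via the functions $I^{(j)}$ (Theorem~\ref{thm Ij}), together with Lemma~\ref{ lemma differentiation higher order } to differentiate the composition. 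If your intent was to prove the full statement, this part is missing entirely; if you meant only to prove the bound given the formula, you should say so explicitly.

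For the bound itself, your approach is the same as the paper's final paragraph: estimate each of the finitely many summands by pairing the multilinear form $\frac{\delta^{\hat n}\Phi}{\delta m^{\hat n}}(m(t,\mu))$ against the $m^{(\beta_j)}$ and invoking the a~priori control on the latter (Theorem~\ref{ thm mk lipschitz}, which with $n=2$ gives $\sup_{t,\mu,z}\|m^{(\beta)}(t,\mu,\delta_{z_1},\dots,\delta_{z_\beta})\|_{-(\beta+1,\infty)}<\infty$, hence also in the weaker norm $\|\cdot\|_{-(k+1,\infty)}$).

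Two small corrections to your description. First, \hintphi{k+1}{k} is not a statement about the $W^{k+1,\infty}$-norm of $\frac{\delta^{\hat n}\Phi}{\delta m^{\hat n}}$ in each point variable (that would be \hregphi{k+1}{k}); it directly asserts boundedness of the multilinear form against arguments of unit $(W^{k+1,\infty})'$-norm, so by multilinearity one gets the product bound in one step without any peeling. Second, there are no ``derivatives lost because the data are Dirac masses'': the initial datum of $m^{(\beta)}$ is $\mu_\beta-\mu$, which for $\mu_\beta=\delta_z$ satisfies $\|\delta_z-\mu\|_{-(p,\infty)}\le 2$ for every $p\ge 0$, and the estimate of Theorem~\ref{ thm mk lipschitz} uses only this, not parabolic smoothing.
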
    
 \subsection{Links of the main result with the theory of quantitative propagation of chaos} 
 This result has intricate links with the theory of McKean-Vlasov stochastic differential equations (MVSDEs) and mean-field optimal control.   Let us consider a probability space $(\Omega, \cF, \bP)$ equipped with a $d$-dimensional Brownian motion $W$. Denoting the law of random variable $\eta$ by $\rvlaw[\eta]$, we consider a $d$-dimensional MVSDE given by 
    \begin{equation} \begin{cases}
      X^{0, \eta}_t  = \eta + \int_0^t b(X^{0,\eta}_s, \rvlaw[X^{0, \eta}_s]) \, ds + \sqrt{2} W_t, \\
              \rvlaw[\eta]= \mu. 
    \end{cases} \label{eq:MVSDE} \end{equation}
Lipschitz condition on $b$ ensures uniqueness of the solution to \eqref{eq:MVSDE} (\cite{sznitman1991topics}) and it can be easily checked that in this case
$$ m(s, \mu) = \rvlaw[X^{0, \eta}_s].$$ 
MVSDEs provide a probabilistic
representation to the solutions of a class of nonlinear PDEs. A particular example of such nonlinear PDEs was first
studied by McKean (\cite{mckean1966class}). These equations describe the limiting behaviour of an individual particle evolving within
a large system of particles undergoing diffusive motion and interacting in a `mean-field' sense, as the population
size grows to infinity. More precisely, we consider the following system of particles,
\begin{equation} 
\begin{cases}
       Y^{i,N}_t= \eta_i + \int_0^t  b \big( Y^{i,N}_s, \mu^{N}_s \big)  \,ds + \sqrt{2} W^i_t, \quad 1 \leq i \leq N, \quad t \in [0,T], \\
               \mu^{N}_s := \frac{1}{N} \sum_{i=1}^N \delta_{Y^{i,N}_s},    \end{cases} \label{eq particles}
\end{equation}
where $W^{i},$ $1 \leq i \leq N$,  are independent $d$-dimensional Brownian motions and $\eta_i,$ $1 \leq i \leq N$, are i.i.d. random variables with the same distribution as $\eta$. A particular characteristic of the limiting behaviour of the system, is that any finite subset
of particles becomes asymptotically independent of each other. This phenomenon is known as \emph{propagation of chaos}. We refer the reader to \cite{gartner1988mckean,meleard1996asymptotic,sznitman1991topics} for the classical results in this direction and to \cite{bossy2011conditional,fournier2016propagation,jourdain2007nonlinear,lacker2018strong,mischler2013kac} for an  account (non-exhaustive) of recent results. Nonetheless, most results are only qualitative and do not give us a rate of convergence. 

For deterministic $\eta = c \in \bR^d$, it is shown in \cite{chassagneux2019weak} that under sufficient regularity of $b$ and $\Phi$, the weak error between the particle system \eqref{eq particles} and its mean-field limit 
\eqref{eq:MVSDE} is  given by
\begin{equation}  \bE[ \Phi(\mu^{N}_T)] - \Phi(\rvlaw[X^{0,\eta}_T]) =  \frac{1}{N} \int_0^T \bE \bigg[ \intrd  \text{Tr} \bigg( \partial_{y_{1}} \partial_{y_{2}} \frac{ \delta^2 \cU}{\delta m^2} (T-s, \mu^{N}_s)(z,z) \bigg) \, \mu^{N}_s(dz)  \bigg]  \, ds. \label{consequence master equation}  \end{equation}
(A more complicated formula is also given in \cite{chassagneux2019weak} for non-deterministic initial conditions.)  To obtain a full expansion of the form 
\[
\bE[ \Phi(\mu^{N}_T)] - \Phi(\rvlaw[X^{0,\eta}_T])= \sum_{j=1}^{k-1}\frac{C_j}{N^j} + O(\frac{1}{N^k}),
\]
for some positive constants $C_1, \ldots, C_{k-1}$ that do not depend on $N$, one would even need to consider higher order linear derivatives $\frac{ \delta^k \cU}{\delta m^k}$ (see \cite{chassagneux2019weak}). 

Note that in most practical applications, the test function $\Phi$ being considered is linear, therefore its linear derivatives have simple closed-form formulae.  In this case, the advantage of \eqref{k order full formula} is that it expresses $\frac{ \delta^k \cU}{\delta m^k}$ completely in terms of higher order Kolmogorov equations $m^{(\beta)}$, which are intrinsically Cauchy problems. 

Despite being out of the scope of this paper, we remark that it is not difficult to compute the expression for
\begin{equation}  \partial_{z_1} \ldots \partial_{z_k} \frac{ \delta^k \cU}{\delta m^k}(t,\mu)(z_1, \ldots, z_k) \label{ der linear der} \end{equation} 
by perturbing each of the measures $\mu_1, \ldots, \mu_{\beta}$ in $m^{(\beta)}(t, \mu, \mu_1, \ldots, \mu_{\beta})$. This is much simpler than the linearisation procedure performed in this paper, where we perturb measure $\mu$, which is more cumbersome and technical. Through more sophisticated techniques of global Schauder estimates, it should even be possible to obtain a control of \eqref{ der linear der} that decays over time $t$, which allows us to obtain a uniform estimate of propagation of chaos in $T$, by \eqref{consequence master equation}. This is a closely related research direction. 
\subsection{Main method of proof in this paper} 
The main idea of proof comes from \cite{cardaliaguet2015master}, based on their idea of `linearising' a forward-backward mean-field game system by perturbating the measure component.  Our strategy follows a similar argument as Proposition 3.4.3 and Corollary 3.4.4 in \cite{cardaliaguet2015master}. 

To explore regularity of \eqref{eq: forward eqn } along the measure component, we perturb probability measure $\mu \in \cP(\bT^d)$ along direction $\hmu \in \cP(\bT^d)$. Take any smooth test function $\phi: [0,T] \times \bT^d \to \bR$. We have
\begin{eqnarray}
&&  \int_{\bT^d} \phi(t,x) \big( m(t, (1- \eps) \mu + \eps \hmu ) \big) (dx) - \int_{\bT^d} \phi(0,x) \big(  (1- \eps) \mu + \eps \hmu \big) (dx) \nonumber \\
&=& \int_0^t  \int_{\bT^d} \partial_s \phi(s,x) \, \big( m(s, (1- \eps) \mu + \eps \hmu) \big) (dx) \,ds \nonumber \\ 
&& + \int_0^t \int_{\bT^d} \Delta \phi(s,x) \, \big( m(s, (1- \eps) \mu + \eps \hmu) \big) (dx) \,ds \nonumber \\
&& + \int_0^t \int_{\bT^d} \Big[ b \big( x, m(s, (1- \eps) \mu + \eps \hmu) \big) \cdot \nabla \phi(s,x) \Big]  \, \big( m(s, (1- \eps) \mu + \eps \hmu) \big) (dx) \,ds. \label{eq: fokker planck expanded} \end{eqnarray} 
We define
$$ m^{(1)} (s,\mu,\hat{\mu}):= \frac{d}{d \eps} \bigg|_{\eps=0^{+}}  m(s, (1- \eps) \mu + \eps \hat{\mu}) $$
in the sense of distributions. 
Then one should expect that
\begin{equation} \frac{d}{d \eps} \bigg|_{\eps=0^{+}} \Phi \big( m(s, (1- \eps) \mu + \eps \hat{\mu}) \big) = \int_{\bT^d} \frac{\delta \Phi}{\delta m}(m(s, \mu))(y) \, m^{(1)}(s,\mu,\hat{\mu}) (dy). \label{heuristics der}  \end{equation} 
(In particular, for the linear case when $m(s, \mu)= \mu$, we have 
$$ \frac{d}{d \eps} \bigg|_{\eps=0^{+}} \Phi \big(  (1- \eps) \mu + \eps \hat{\mu} \big) = \int_{\bT^d} \frac{\delta \Phi}{\delta m}( \mu)(y) \, (\hat{\mu} - \mu) (dy),$$
which is a consequence of the definition of the linear functional derivative.) 
Applying \eqref{heuristics der} to \eqref{eq: fokker planck expanded}, by differentiating \eqref{eq: fokker planck expanded} w.r.t. $\eps $ at $0$, we have
\begin{eqnarray}
&&  \int_{\bT^d} \phi(t,y)  \, m^{(1)}(t,\mu,\hat{\mu}) (dy) - \int_{\bT^d} \phi(0,y)  \, m^{(1)}(0,\mu,\hat{\mu}) (dy)  \nonumber \\
&= & \int_0^t \int_{\bT^d} \partial_s \phi(s,y)  m^{(1)}(s, \mu,\hat{\mu}) (dy)  \,ds  + \int_0^t \int_{\bT^d} \Delta \phi(s,y)  \, m^{(1)}(s, \mu,\hat{\mu}) (dy)  \,ds \nonumber \\
&& + \int_0^t \int_{\bT^d} \Big[ b \big( y, m(s, \mu) \big) \cdot \nabla \phi(s,y) \Big]  \,m^{(1)}(s,\mu,\hat{\mu}) (dy)  \,ds \nonumber \\
&& + \int_0^t \int_{\bT^d}  \int_{\bT^d} \bigg[ \frac{\delta b}{\delta m} \big( x, m(s,\mu) \big)(y) \cdot \nabla \phi(s,x) \bigg] \,  m^{(1)}(s,\mu,\hat{\mu}) (dy)   \, \big( m(s, \mu) \big) (dx) \,ds. \label{eq: derivative expansion}   \end{eqnarray}
Note that, in the distribution sense, \eqref{eq: derivative expansion} can be rewritten as the linearised forward Kolmogorov equation 
\begin{equation} 
\left\{
\begin{array}{rrl}
      {} &  \partial_t m^{(1)}(t,\mu,\hat{\mu}) + \text{div} ( b(\cdot,m(t,\mu)) m^{(1)}(t,\mu,\hat{\mu})) & \\
      & + \text{div}  \big(m(t,\mu) \frac{\delta b}{\delta m} (\cdot,m(t,\mu))(m^{(1)}(t,\mu,\hat{\mu})) \big) - \Delta m^{(1)} (t,\mu,\hat{\mu}) &= 0, \\
      & &  \\
      {} & m^{(1)}(0,\mu,\hat{\mu})& = \hat{\mu}-\mu. \\
\end{array} 
\right. 
  \label{PDE linearised} \end{equation}
This is what we expect by differentiating \eqref{eq: forward eqn } formally in $m$. To show that this is indeed the case, we consider the difference $m(t, \hat{\mu}) - m(t, \mu) - m^{(1)}(t, \mu, \hat{\mu})$ to prove differentiability of $m$ with respect to the measure.

We adopt the approach of Schauder theory and most of the results follow from Theorem \ref{thm schauder}, which is a fundamental result of Schauder estimates on the viscous transport equation. Based on Schauder theory, it is shown in Theorem \ref{thm m m-one}  that there exists some constant $C>0$ such that
    $$ \sup_{t \in [0,T]} \| m(t, \hat{\mu}) - m(t, \mu) - m^{(1)}(t, \mu, \hat{\mu}) \|_{-(n,\infty)} \leq C W_1( \mu, \hat{\mu})^2,$$ 
under the assumptions   {\regb[n]}, {\lipb[0]}, {\hlipphi[1]} and {\hregphi{n}{1}}, where $n \geq 2$. Therefore, we can show that
$$ \frac{d}{d \eps} \bigg|_{\eps=0^{+}} \Phi \big( m(t, (1- \eps) \mu + \eps \hat{\mu}) \big) = \int_{\bT^d} \frac{\delta \Phi}{\delta m}(m(t, \mu))(y) \, m^{(1)}(t,\mu,\hat{\mu}) (dy). $$ 

Nonetheless, to show that $\cU$ indeed has a linear functional derivative, we need to express the integral on the right hand side in terms of the signed measure $\hat{\mu}- \mu$. Here is where probability theory comes into action. For every $t \in [0,T]$ and $x \in \bR^d$, we consider the decoupled process $\{ X^{0,x,\mu}_u \}_{u \in [0,t]}$ defined by
\begin{equation} 
      X^{0,x,\mu}_s  = x + \int_0^s b( X^{0,x,\mu}_r, m(r,\mu)) \, dr + \sqrt{2} W_s, \quad \quad 0  \leq s \leq t. \label{decoupled paul eric} \end{equation}
For every $\xi: \bT^d \to \bR$ and $t \in [0,T]$, we define a function $v(\cdot, \cdot, \cdot; \xi ,t): [0,t] \times \bT^d \times \cP(\bT^d) \to \bR$ such that
$$ v(s,x, \mu; \xi ,t):= \bE \big[ \xi(X^{0,x, \mu}_t) \big| X^{0,x, \mu}_s = x \big],$$ 
which satisfies the backward Kolmogorov equation
\begin{equation*}
\begin{cases}
     \partial_s v(s,x, \mu) + b(x, m(s, \mu)) \cdot \nabla v(s,x,\mu) + \Delta v(s,x,\mu) =0, \\
             v(t,x, \mu)= \xi(x).
    \end{cases} 
    \end{equation*}
Note that
$$ v(0,x,\mu; \xi ,t) = \bE \big[ \xi(X^{0,x, \mu}_t) \big] $$ 
and therefore \footnote{Note that if the law of $\eta_1$ is equal to the law of $\eta_2$, then the law of  $X^{0, \eta_1}_t$ is  also equal to the law of $X^{0, \eta_2}_t$. Therefore, if we are only interested in the law of the process $X^{0,\eta}_t$, where $\eta$ is distributed as $\mu$, then it is proper to adopt the notation $X^{0,\mu}_t$.} 
$$ \int_{\bT^d} \xi(x) \, m(t,\mu)(dx) = \bE[ \xi(X^{0, \mu}_t)] =  \int_{\bT^d} v(0,x, \mu; \xi ,t) \, \mu(dx) .$$
By linearisating with respect to $\mu$ in the same way as \eqref{eq: fokker planck expanded} and \eqref{heuristics der}, we obtain that 
\begin{equation*}
\int_{\bT^d} \xi(x) \, m^{(1)}(t, \mu, \hat{\mu})(dx)   =   \int_{\bT^d}  \bigg[ v(0,x, \mu; \xi ,t) + \int_{\bT^d}  \frac{\delta v}{\delta m}(0,z, \mu,x; \xi ,t)   \, \mu(dz) \bigg] (\hat{\mu}- \mu)(dx). 
\end{equation*}
Consequently, by replacing $\xi$ by $\ld[\Phi](m(s,\mu))(\cdot)$, we can deduce from \eqref{heuristics der} the existence of the first order linear derivative of $\cU$. We repeat the same procedure for higher order linear derivatives of $\cU$. It is precisely this combination of forward and backward equations that allows us to prove existence of the linear derivatives of $\cU$. 
\subsection{Comparison with other approaches in the literature} 
There are various alternative methods for establishing smoothness of functions of the form \eqref{eq: defofflow} in the literature, all of which are probabilistic.

The method of Malliavin calculus is adopted in \cite{crisan2017smoothing}. That paper proves smoothness of $\cU$, for $\Phi$ being in the form
$$ \Phi (\mu) = \int_{\bR^d} \zeta(y) \,  \mu (dy), $$ 
where $\zeta: \bR^d \to \bR$ is infinitely differentiable with bounded partial derivatives. 

The method of parametrix is considered in  \cite{de2015strong}. We represent $\cU$  in terms of the transition density $p(s,\mu;t',y';t,y)$ of $X^{s, x, \mu}_t$ (defined above in \eqref{decoupled paul eric}). This method is applied to the case in which $b$ and $\Phi$ are of the form
    $$ b(x, \mu) = \varphi_2 \bigg( x, \int_{\bR^d} \varphi_1 (y) \mu (dy) \bigg) , \quad \quad  \Phi (\mu) = \int_{\bR^d} \zeta(y) \,  \mu (dy),$$ 
    for some functions  $\varphi_1: \bR^d  \to \bR$, $\varphi_2: \bR^d \times \bR \to \bR^d$ and $\zeta: \bR^d \to \bR$.
Nonetheless, it is not clear whether this method can be applied to $b$ and $\Phi$ with more general forms.  

Finally, a `variational' approach is adopted in \cite{buckdahn2017mean}. The core idea is to prove smoothness of $\cU$ by viewing the lift of $\cU$ (i.e. the map $Y \mapsto \cU(\rvlaw[Y])$) as a composition of the map $\eta \mapsto X^{0, \eta}_{t}$ and the lift of $\Phi$ (i.e. the map $Y \mapsto \Phi(\rvlaw[Y])$). In \cite{buckdahn2017mean}, the smoothness of $\cU$ is proven up to the second order, under fairly general conditions on $b$ and $\Phi$.  
\subsection{Notations and main assumptions} 
\subsubsection{Notations}
The scalar product between two vectors $a,b \in \bR^d$ is denoted by $a \cdot b$. $\cP(\bT^d)$ denotes the space of integrable probability measures and $W_1$ denotes the 1-Wasserstein distance, defined by
\begin{equation*}
W_1(\mu,\nu) :=  \inf_{\pi \in \Pi(\mu,\nu)}  \int_{\bT^d\times \bT^d} |x-y| \, \pi(dx,dy),
\end{equation*}
where $\Pi(\mu,\nu)$ denotes the set of {\em couplings} between $\mu$ and $\nu$, i.e. all measures on $\mathscr{B}(\bT^d\times \bT^d)$ 
such that $\pi(B \times \bT^d) = \mu(B)$ and $\pi(\bT^d \times B) = \nu(B)$ 
for every $B \in \mathscr B(\bT^d)$. 

 ${}$ \\ To write the norms of a Sobolev space $W^{n,\infty}(\bT^d)$ and its dual, we use the notations
$$\| \cdot \|_{{n,\infty}} := \| \cdot \|_{W^{n,\infty}(\bT^d)}, \quad \quad \quad \quad \| \cdot \|_{-(n,\infty)} := \| \cdot \|_{(W^{n,\infty}(\bT^d))'}.$$
Moreover, for dual elements with their arguments, we use the notation
$$  \lev \xi, \rho \rev_{{n,\infty}} := \lev \xi, \rho   \rev_{W^{n,\infty}(\bT^d), (W^{n,\infty}(\bT^d))'}.$$ 

 ${}$ \\  Denoting $W^{0,\infty}(\bT^d):= L^\infty(\bT^d)$, for any $f \in W^{n-1,\infty}(\bT^d, \bR^d)$  and $ \eta \in L^{\infty} ([0,T],  (W^{n-1,\infty}(\bT^d))')$, we use the notation 
\begin{equation}  \lev \xi, -\text{div} \big( \eta(t) f \big) \rev_{{n,\infty}} := \lev f \cdot \nabla \xi,  \eta(t) \rev_{{n-1,\infty}} , \quad \quad t \in [0,T], \quad \quad n \in \bN.    \label{eq xi q t def 2}  \end{equation}  
 ${}$ \\  $W^{0,n,\infty}([0,T] \times {\mathbb T}^d)$ denotes, for $n \geq 1$, 
the space of measurable functions $f : [0,T] \times {\mathbb T}^d \rightarrow {\mathbb R}$
with spatial generalized derivatives up to order $n$ that all belong to $L^\infty([0,T] \times {\mathbb T}^d)$. We define
$$ \| f \|_{0,n, \infty} := \sup_{t \in [0,T]} \| f(t, \cdot) \|_{n, \infty}. $$ 

 ${}$ \\ For functions $f=(f_1, \ldots, f_d): [0,T] \times \bT^d \to \bR^d$ such that each component function $f_i$ belongs to $W^{0,n,\infty}([0,T] \times {\mathbb T}^d)$, we write $f \in W^{0,n,\infty}([0,T] \times {\mathbb T}^d, \bR^d)$ with 
$$ \| f\|_{0,n, \infty} := \bigg( \sum_{i=1}^d \| f_i\|^2_{0,n, \infty} \bigg)^{1/2}. $$

 ${}$ \\ For any  signed measures $\mu_1, \ldots, \mu_n$, we write $\frac{\delta^n \Phi}{\delta m^n} (\mu)(\mu_1, \ldots, \mu_n) $ to denote
$$ \frac{\delta^n \Phi}{\delta m^n} (\mu)(\mu_1, \ldots, \mu_n):= \int_{\bT^d} \ldots \int_{\bT^d}  \frac{\delta^n \Phi}{\delta m^n} (\mu, x_1, \ldots, x_n) \, \mu_1(dx_1) \, \ldots \, \mu_n (dx_n),$$ 
if this iterated integral is well-defined.

 ${}$ \\  Unless otherwise specified, $C$ is a constant that only depends on $n$,  $k$, $T$, $b$ and $\Phi$, whose value varies from line to line.   
\subsubsection{Main assumptions} \label{section assumptions}

Throughout this work, we work with the following assumptions on $b=(b_i)_{1 \leq i \leq d}$ and $\Phi$.  \hregb{n}{k} denotes the condition that, for each $i \in \{1, \ldots, d\}$, $\ell \in \{1, \ldots, k \}$, 
\begin{multline}
\frac{\delta^{\ell} b_i}{\delta m^{\ell}} \text{ exists and satisfies }   \sup_{m \in \cP(\bT^d)}   \| b_i(\cdot, m) \|_{n, \infty} + \sup_{x \in \bT^d} \sup_{m \in \cP(\bT^d)} \bigg\| \ld[b_i] (x, m)(\cdot) \bigg\|_{n, \infty} \\
+  \sup_{m \in \cP(\bT^d)} \sup_{\substack{ \|m_1\|_{-(n, \infty)}, \ldots, \| m_{\ell}\|_{-(n, \infty)} \leq 1}} \bigg\|  \frac{\delta^{\ell} b_i}{\delta m^{\ell}} (\cdot,m)(m_1, \ldots, m_{\ell}) \bigg\|_{n, \infty} < + \infty. 
\label{eq:Int-b-n-k} \tag{Int-$b$-($n,k$)}
\end{multline} 
\hlipb{n}{k} denotes the condition that, for each $i \in \{1, \ldots, d\}$, $\ell \in \{1, \ldots, k \}$, 
\[
\frac{\delta^{\ell} b_i}{\delta m^{\ell}} \text{ exists and satisfies } \quad \text{Lip}_{n}\bigg(\frac{\delta^{\ell} b_i}{\delta m^{\ell}} \bigg) <+ \infty,
\label{eq:Lip-b-n-k} \tag{Lip-$b$-($n,k$)}
\]
where
\begin{eqnarray}
&&  \text{Lip}_{n}\Big(\frac{\delta^{\ell} b_i}{\delta m^{\ell}} \Big)  \nonumber \\
& := &  \sup_{\substack{ \|m_1\|_{-(\na)}, \ldots, \| m_{\ell}\|_{-(\na)} \leq 1}}  \sup_{\substack{\mu_1 \neq \mu_2\\ \mu_1, \mu_2 \in \cP(\bT^d)}} \big(W_1(\mu_1,\mu_2) \big)^{-1} \bigg\| \frac{\delta^{\ell} b_i}{\delta m^{\ell}}(\cdot,\mu_1)(m_1, \ldots, m_{\ell}) - \frac{\delta^{\ell} b_i}{\delta m^{\ell}}(\cdot,\mu_2)(m_1, \ldots, m_{\ell}) \bigg\|_{n, \infty }. \nonumber 
\end{eqnarray}
For the test function $\Phi: \cP(\bT^d) \to \bR$, we shall impose the following assumptions. \hlipphi[k] denotes the condition that, for each $\ell \in \{1, \ldots, k\}$,
\[ \frac{\delta^{\ell} \Phi}{\delta m^{\ell}} \text{ exists and satisfies } \quad   \text{Lip}\bigg(\partial_{x_{\ell}} \frac{\delta^{\ell} \Phi}{\delta m^{\ell}} \bigg) < +\infty, \label{eq:TReg-Phi} \tag{TLip-$\Phi$-($k$)}
\]
where
\begin{eqnarray}\text{Lip}\bigg(\partial_{x_{\ell}} \frac{\delta^{\ell} \Phi}{\delta m^{\ell}} \bigg) & := & \sup_{x_1, \ldots, x_{\ell} \in \bT^d} \sup_{\substack{\mu_1 \neq \mu_2\\ \mu_1, \mu_2 \in \cP(\bT^d)}}  \big(W_1({\mu}_1,{\mu}_2) \big)^{-1} \bigg| \partial_{x_{\ell}} \frac{\delta^{\ell} \Phi}{\delta m^{\ell}}({\mu}_1)(x_1, \ldots, x_{\ell}) - \partial_{x_{\ell}} \frac{\delta^{\ell} \Phi}{\delta m^{\ell}}({\mu}_2)(x_1, \ldots, x_{\ell}) \bigg|. \nonumber 
\end{eqnarray}
\hregphi{n}{k} denotes the condition that, for each $\ell \in \{1, \ldots, k\}$ and $i \in \{1, \ldots, \ell\}$, 
\[\frac{\delta^{\ell} \Phi}{\delta m^{\ell}}  \text{ exists and }  \sup_{y_1, \ldots, y_{i-1}, y_{i+1}, \ldots, y_{ \ell} \in \bT^d} \sup_{m \in \cP(\bT^d)} \bigg\| \frac{\delta^{\ell} \Phi}{\delta m^{\ell}} (m, y_1, \ldots, y_{i-1}, \cdot, y_{i+1} , \ldots, y_{\ell}) \bigg\|_{n,\infty} < +\infty. \label{eq:hReg-Phi-n-k} \tag{TReg-$\Phi$-($n,k$)}
\]
Finally, \hintphi{n}{k} denotes the integrability condition that, for each $\ell \in \{1, \ldots, k\}$,
\[
\frac{\delta^{\ell} \Phi}{\delta m^{\ell}} \text{ exists and satisfies } \sup_{m \in \cP(\bT^d)} \sup_{\substack{ \|m_1\|_{-(\na)}, \ldots, \| m_{\ell}\|_{-(\na)} \leq 1}} \bigg|  \frac{\delta^{\ell} \Phi}{\delta m^{\ell}} (m)(m_1, \ldots, m_{\ell}) \bigg| < + \infty. \label{eq:TInt-Phi-n-k} \tag{TInt-$\Phi$-($n,k$)}
\]
\subsection{Practical examples of our model} \label{examples}
We now give a  result of a class of drift terms $b$ and test functions $\Phi$ that satisfies the above assumptions, followed by practical examples of our model. 
\begin{theorem} \label{thm example}
Let $n \in \bN$. Suppose that for each $i \in \{1, \ldots, d\}$, $F_i: \bT^d \times \bT^d \to \bR$ belongs to $W^{\na}(\bT^d \times \bT^d)$ and that $G: \bT^d \to \bR$ belongs to $W^{\na}(\bT^d)$. We then define functions $b_i: \bT^d \times \cP(\bT^d) \to \bR$ and $ \Phi: \cP(\bT^d) \to \bR$ by
$$ b_i(x, \mu):= \intrd F_i(x, y) \, \mu(dy),  \quad \quad \Phi(\mu):= \intrd G(y) \, \mu(dy).$$ 
Then $b$ satisfies \emph{\hregb{n}{k}} and \emph{\hlipb{n}{k}}. Moreover, $\Phi $ satisfies \emph{\hlipphi[k]}, \emph{\hregphi{n}{k}} and \emph{\hintphi{n}{k}}.
\end{theorem}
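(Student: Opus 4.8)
The plan is to compute every linear functional derivative of $b_i$ and $\Phi$ in closed form, observe that these derivatives have a very rigid structure, and then check each of the five conditions by elementary estimates. Since $\mu \mapsto b_i(x,\mu) = \intrd F_i(x,y)\,\mu(dy)$ is affine, a direct computation gives $\frac{\delta b_i}{\delta m}(x,\mu)(y) = F_i(x,y) - \intrd F_i(x,z)\,\mu(dz)$, the subtracted term enforcing the normalisation \eqref{eq: normalisation linear functional deriatives}; differentiating once more (only the $\mu$-linear term $-\intrd F_i(x,z)\,\mu(dz)$ contributes) and iterating, one proves by induction on $\ell$ that for every $\ell \in \{1,\dots,k\}$
\begin{equation*}
\frac{\delta^{\ell} b_i}{\delta m^{\ell}}(x,\mu)(y_1,\dots,y_{\ell}) = (-1)^{\ell-1}\Big( F_i(x,y_{\ell}) - \intrd F_i(x,z)\,\mu(dz) \Big),
\end{equation*}
and, identically, $\frac{\delta^{\ell}\Phi}{\delta m^{\ell}}(\mu)(y_1,\dots,y_{\ell}) = (-1)^{\ell-1}\big( G(y_{\ell}) - \intrd G\,d\mu \big)$. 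The only thing to verify at this stage is that these expressions satisfy \eqref{eq de first order deriv}--\eqref{eq de higher order deriv}, which is immediate because $\intrd(\mu'-\mu)(dy)=0$ makes every additive constant in the integrand disappear.

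\textbf{Structural observations.} Three features drive everything. (i) The $\ell$-th derivative depends on $y_1,\dots,y_{\ell-1}$ only trivially (it is constant in them) and on $\mu$ only through the additive constant $\intrd F_i(x,z)\,\mu(dz)$ (resp. $\intrd G\,d\mu$), whose modulus is at most $\|F_i\|_{\infty}$ (resp. $\|G\|_{\infty}$), uniformly in $x$ and $\mu$. (ii) The $\ell$-th derivative is built from $F_i$ (resp. $G$) itself with no further differentiation, so there is no loss of regularity as $\ell$ increases --- hence a single Sobolev hypothesis covers all orders $\ell \le k$ at once. (iii) Applying $\partial_{x_\ell}$ annihilates the $\mu$-dependent constant, so $\partial_{x_\ell}\frac{\delta^{\ell}\Phi}{\delta m^{\ell}}(\mu)(x_1,\dots,x_\ell) = (-1)^{\ell-1}\nabla G(x_\ell)$ does not depend on $\mu$ at all.

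\textbf{Verification.} Each condition now reduces to a short estimate. For \hregb{n}{k}: $\partial_x^\alpha b_i(x,m) = \intrd \partial_x^\alpha F_i(x,z)\,m(dz)$ is bounded by $\|F_i\|_{W^{n,\infty}}$ for $|\alpha|\le n$; $\frac{\delta b_i}{\delta m}(x,m)(\cdot) = F_i(x,\cdot)$ minus a bounded constant has $W^{n,\infty}(\bT^d)$-norm $\le 2\|F_i\|_{W^{n,\infty}}$; and, writing $\frac{\delta^{\ell} b_i}{\delta m^{\ell}}(\cdot,m)(m_1,\dots,m_\ell) = (-1)^{\ell-1}\big(\prod_{j<\ell}\lev 1,m_j\rev\big)\,\lev F_i(\cdot,\cdot) - \intrd F_i(\cdot,z)\,m(dz),\, m_\ell\rev$, one bounds each factor via $|\lev 1,m_j\rev| \le \|1\|_{n,\infty}\|m_j\|_{-(n,\infty)} \le 1$ and $|\lev g, m_\ell\rev| \le \|g\|_{n,\infty}$ (the latter applied after each $\partial_x^\alpha$, $|\alpha|\le n$), so this last term is $\le 2\|F_i\|_{W^{n,\infty}}$. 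For \hlipb{n}{k}: in the difference of the $\ell$-th derivatives at $\mu_1$ and $\mu_2$ the $F_i(x,y_\ell)$-terms cancel, leaving $\pm\big(\prod_j\lev 1,m_j\rev\big)\intrd F_i(\cdot,z)(\mu_1-\mu_2)(dz)$, and $\big|\intrd \partial_x^\alpha F_i(x,z)(\mu_1-\mu_2)(dz)\big| \le \mathrm{Lip}_z(\partial_x^\alpha F_i)\,W_1(\mu_1,\mu_2)$ by the Kantorovich--Rubinstein duality, whence $\mathrm{Lip}_n\big(\frac{\delta^{\ell} b_i}{\delta m^{\ell}}\big) \le \max_{|\alpha|\le n}\|\partial_x^\alpha \nabla_z F_i\|_{\infty}<+\infty$. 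For $\Phi$: \hregphi{n}{k} holds because $\frac{\delta^{\ell}\Phi}{\delta m^{\ell}}(m,y_1,\dots,\cdot,\dots,y_\ell)$ is, in the $i$-th slot, either $(-1)^{\ell-1}\big(G(\cdot)-\intrd G\,dm\big)$ when $i=\ell$ (norm $\le 2\|G\|_{W^{n,\infty}}$) or a bounded constant when $i<\ell$; \hlipphi[k] holds because, by observation (iii), $\partial_{x_\ell}\frac{\delta^{\ell}\Phi}{\delta m^{\ell}}$ is independent of $\mu$ and so has Lipschitz constant $0$ in $\mu$; and \hintphi{n}{k} holds since $\big|\frac{\delta^{\ell}\Phi}{\delta m^{\ell}}(m)(m_1,\dots,m_\ell)\big| = \big|\prod_{j<\ell}\lev 1,m_j\rev\big|\cdot\big|\lev G - \intrd G\,dm,\, m_\ell\rev\big| \le 2\|G\|_{W^{n,\infty}}$.

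\textbf{Main point of care.} There is no genuine difficulty here --- the statement is a verification --- but one should be mindful of the Sobolev bookkeeping: \hregb{n}{k} forces simultaneous control of $\partial_x^\alpha\partial_{y_\ell}^\beta F_i$ for $|\alpha|,|\beta|\le n$ (and \hlipb{n}{k} costs one further $y$-derivative), so $F_i \in W^{n,\infty}(\bT^d\times\bT^d)$ must be read as supplying bounded partial derivatives up to order $n$ in each variable separately, which is exactly what the closed-form derivatives above demand. Everything else is insensitive to the additive normalisation constants, precisely because they are uniformly bounded.
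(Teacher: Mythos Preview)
Your proposal is correct and follows essentially the same approach as the paper: compute the closed-form expression $\frac{\delta^{\ell} b_i}{\delta m^{\ell}}(x,\mu)(y_1,\dots,y_{\ell})=(-1)^{\ell-1}\bigl(F_i(x,y_{\ell})-\intrd F_i(x,z)\,\mu(dz)\bigr)$ (and the analogous formula for $\Phi$), then verify each condition by the same elementary devices---bounding $\langle 1,m_j\rangle$ by $\|m_j\|_{-(n,\infty)}$ for the dummy variables, pairing $m_\ell$ against a $W^{n,\infty}$ function for the live variable, and invoking Kantorovich--Rubinstein duality for the Lipschitz estimates. Your observation that $\partial_{x_\ell}\frac{\delta^{\ell}\Phi}{\delta m^{\ell}}$ is actually independent of $\mu$ (so the Lipschitz constant is $0$) is sharper than the paper's bound $\|G\|_{n,\infty}$ but of course consistent with it, and your remark on the Sobolev bookkeeping (that the condition on $\frac{\delta^\ell b_i}{\delta m^\ell}(\cdot,m)(m_1,\dots,m_\ell)$ implicitly requires bounded mixed derivatives $\partial_x^\alpha\partial_y^\beta F_i$ with $|\alpha|,|\beta|\le n$) makes explicit a point the paper glosses over.
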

\begin{proof}
Let $k \in \bN$ be arbitrary. Let 
$$\Theta := \bigg| \bigg\{ \text{multi-index } \bm{\beta} \in \bN^d \,  \bigg| \,   |\bm{\beta}| \leq n \bigg\} \bigg| . $$ 
It can be shown easily by the definition of linear functional derivatives (along with the condition of normalisation) that
$$\frac{ \delta^{k} b_i}{\delta m^k} (x, \mu)(y_1, \ldots, y_k)= (-1)^k \bigg( \intrd F_i(x, y) \, \mu(dy) - F_i(x, y_k) \bigg).$$ 
It can be easily checked that
$$ \sup_{m \in \cP(\bT^d)}   \| b_i(\cdot, m) \|_{\na} \leq  \Theta \| F_i \|_{\na} \, ,$$
$$ \sup_{x \in \bT^d} \sup_{m \in \cP(\bT^d)} \bigg\| \ld[b_i] (x, m)(\cdot) \bigg\|_{\na} \leq 2 \Theta \| F_i \|_{\na}$$ 
and
\begin{eqnarray}
&& \sup_{m \in \cP(\bT^d)} \sup_{m_1, \ldots, m_{k} \in (W^{n,\infty})'} \bigg\|  \frac{\delta^{k} b_i}{\delta m^{k}} (\cdot,m)(m_1, \ldots, m_{k}) \bigg\|_{\na}  \nonumber \\
& \leq &  \Theta \Big(  \| F_i \|_{\na} +  \| F_i \|_{\na} \| m_{k} \|_{-(\na)} \Big) \prod_{\ell=1}^{k-1} \| m_{\ell} \|_{-(\na)}. \nonumber \end{eqnarray}
Moreover, by the Kantorovich Rubinstein duality (see Remark 6.5 in \cite{villani}),
\begin{eqnarray}
&& \sup_{m_1, \ldots, m_{k} \in  (W^{n,\infty})'}  \sup_{\substack{\mu_1 \neq \mu_2\\ \mu_1, \mu_2 \in \cP(\bT^d)}} \big(W_1(\mu_1,\mu_2) \big)^{-1} \bigg\| \frac{\delta^{k} b_i}{\delta m^{k}}(\cdot,\mu_1)(m_1, \ldots, m_{k}) - \frac{\delta^{k} b_i}{\delta m^{k}}(\cdot,\mu_2)(m_1, \ldots, m_{k}) \bigg\|_{\na} \nonumber \\
& \leq & \Theta \| F_i \|_{\na} \prod_{\ell=1}^{k-1} \| m_{\ell} \|_{-(\na)}. \nonumber 
\end{eqnarray}
Similarly,
$$\frac{ \delta^{k} \Phi}{\delta m^k} ( \mu)(y_1, \ldots, y_k)= (-1)^k \bigg( \intrd G( y) \, \mu(dy) - G( y_k) \bigg).$$ 
This allows us to show that
$$\text{Lip}\bigg(\partial_{x_{k}} \frac{\delta^{k} \Phi}{\delta m^{k}} \bigg) \leq \| G\|_{\na} ,$$ 
$$\sup_{y_1, \ldots, y_{i-1}, y_{i+1}, \ldots, y_{ k} \in \bT^d} \sup_{m \in \cP(\bT^d)} \bigg\| \frac{\delta^{k} \Phi}{\delta m^{k}} (m, y_1, \ldots, y_{i-1}, \cdot, y_{i+1} , \ldots, y_{k}) \bigg\|_{\na} \leq 2 \Theta \| G\|_{\na} , $$ 
 $$ \sup_{m \in \cP(\bT^d)} \sup_{m_1, \ldots, m_{k} \in  (W^{n,\infty})'} \bigg|  \frac{\delta^{k} \Phi}{\delta m^{k}} (m)(m_1, \ldots, m_{k}) \bigg| \leq 2 \| G\|_{\na} \prod_{\ell=1}^{k} \| m_{\ell} \|_{-(\na)}.$$ 
 These calculations show that $b$ and $\Phi$ satisfy the aforementioned regularity properties in the theorem. Note that $k$ is arbitrary in $\bN$, since the dependence on measure is linear for functions $b$ and $\sigma$. 
\end{proof}
\begin{example}[Kuramoto model]
The Kuramoto model is used to describe the behaviour of synchronization for a large set of coupled oscillators and is defined in dimension $d=1$ (see, e.g., \cite{bertini2010dynamical}):
$$ b(x,\mu) := - \frac{K}{\pi} \int_{\bT} \sin (2 \pi (x-y)) \, \mu(dy).$$  
\end{example}
\begin{example}[Aggregation models]
Aggregation models are commonly used in the analysis of mean-field models in biology, ecology, for space homogeneous
granular media (see \cite{bolley2010trend,bolley2007quantitative,cattiaux2008probabilistic,jabin2017mean,malrieu2001logarithmic}). In such models, the drift term $b$ typically takes the form
$$ b(x,\mu) := - \int_{\bT^d} \nabla W(x-y) \, \mu(dy) - \nabla V (x),$$ 
for some smooth functions $V,W: \bT^d \to \bR$. According to Theorem \ref{thm example}, our analysis would be applicable to functions $V, W \in W^{n,\infty}$, where $n \geq 2$. 
\end{example}
\section{Regularity of first order linear derivative in measure of \texorpdfstring{$\cU$}{U}}

\subsection{Analysis of the forward Kolmogorov equation}
The first step in the analysis of PDEs is the regularity of $m$. The following result concerns regularity of \eqref{eq: forward eqn } and is standard in the literature. 
\begin{lemma}
Suppose that $b$ is jointly Lipschitz continuous in the  space and measure variables w.r.t. the Euclidean and $W_1$ metrics. Then \eqref{eq: forward eqn } has a unique solution and satisfies 
\begin{equation} 
     \sup_{t \in [0,T]} W_1(m(t, \mu_1),m(t, \mu_2)) \leq C W_1(\mu_1, \mu_2), 
 \label{eq: bound buckdahn} \end{equation}
for some constant $C>0$.
\end{lemma}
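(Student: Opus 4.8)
The plan is to exploit the probabilistic representation of \eqref{eq: forward eqn } through the McKean--Vlasov SDE \eqref{eq:MVSDE}. Under the joint Lipschitz hypothesis on $b$ (say with constant $L$; note that on the compact torus this forces $b$ to be bounded), the classical fixed-point argument of Sznitman applies (\cite{sznitman1991topics}): one equips $C([0,T],\cP(\bT^d))$ with the metric $(\nu,\nu')\mapsto\sup_{t\le T}W_1(\nu_t,\nu'_t)$ and shows that the map sending a flow $(\nu_s)_{s\le T}$ to $(\rvlaw[X^\nu_s])_{s\le T}$, where $X^\nu$ is the unique strong solution of $dX^\nu_s = b(X^\nu_s,\nu_s)\,ds + \sqrt2\,dW_s$ with $\rvlaw[X^\nu_0]=\mu$, is a contraction on a short time interval whose length depends only on $L$; iterating over $[0,T]$ produces a unique fixed point and hence a unique solution $X^{0,\eta}$ of \eqref{eq:MVSDE}. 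Setting $m(t,\mu):=\rvlaw[X^{0,\eta}_t]$ with $\rvlaw[\eta]=\mu$ and applying It\^o's formula to $\phi(X^{0,\eta}_t)$ for $\phi\in C^\infty(\bT^d)$ shows, after taking expectations, that $m$ is a weakly continuous $\cP(\bT^d)$-valued solution of \eqref{eq: forward eqn } in the distributional sense.

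For the stability bound \eqref{eq: bound buckdahn} I would use a synchronous coupling. Fix $\mu_1,\mu_2\in\cP(\bT^d)$ and realise on one probability space a single Brownian motion $W$ together with random variables $\eta_1,\eta_2$ such that $\rvlaw[\eta_i]=\mu_i$ and $\bE[\,|\eta_1-\eta_2|\,]=W_1(\mu_1,\mu_2)$ (an optimal coupling; the SDEs are read through their $\bZ^d$-periodic lifts to $\bR^d$ and distances are taken in the torus metric). Let $X^i:=X^{0,\eta_i}$ solve \eqref{eq:MVSDE} driven by this common $W$. Subtracting the two equations cancels the Brownian terms, so
\[
\bE\big[\,|X^1_t - X^2_t|\,\big] \;\le\; \bE\big[\,|\eta_1-\eta_2|\,\big] + \int_0^t \bE\Big[\,\big|\,b(X^1_s,\rvlaw[X^1_s]) - b(X^2_s,\rvlaw[X^2_s])\,\big|\,\Big]\,ds.
\]
Using the joint $L$-Lipschitz property of $b$ together with the elementary inequality $W_1(\rvlaw[X^1_s],\rvlaw[X^2_s])\le\bE[\,|X^1_s-X^2_s|\,]$ (valid because $(X^1_s,X^2_s)$ is a coupling of those two laws) gives $\bE[\,|X^1_t-X^2_t|\,] \le W_1(\mu_1,\mu_2) + 2L\int_0^t \bE[\,|X^1_s-X^2_s|\,]\,ds$, and Gr\"onwall's lemma yields $\sup_{t\le T}\bE[\,|X^1_t-X^2_t|\,] \le e^{2LT}\,W_1(\mu_1,\mu_2)$. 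Since $(X^1_t,X^2_t)$ is a coupling of $(m(t,\mu_1),m(t,\mu_2))$, this is precisely \eqref{eq: bound buckdahn} with $C=e^{2LT}$.

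The only genuinely delicate point is the uniqueness assertion at the level of the PDE rather than the SDE: one must first pin down the admissible class (weakly continuous $\cP(\bT^d)$-valued curves) and then lift any such solution to a solution of the martingale problem associated with \eqref{eq:MVSDE} via a superposition principle, after which uniqueness of the MVSDE closes the argument; alternatively one freezes the nonlinearity and runs a Gr\"onwall estimate in a negative Sobolev norm against the difference of two candidate solutions, using Lipschitz continuity of $b$ in the measure argument. All the remaining steps --- the contraction, the It\^o computation, the coupling estimate --- are routine, and this lemma is standard; a precise reference for \eqref{eq: bound buckdahn} in the present torus setting is \cite{buckdahn2017mean}.
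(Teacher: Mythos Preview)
Your proposal is correct and follows essentially the same route as the paper: the paper's proof simply cites Theorem~1.1 of \cite{sznitman1991topics} for strong uniqueness of the McKean--Vlasov SDE (hence of the Fokker--Planck flow) and Lemma~3.1 of \cite{buckdahn2017mean} for the $W_1$-stability estimate, and your synchronous-coupling-plus-Gr\"onwall argument is precisely the content of that lemma. If anything, you are more explicit than the paper about the gap between SDE uniqueness and PDE uniqueness (the superposition step), which the paper leaves implicit.
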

\begin{proof}
The fact that \eqref{eq: forward eqn } has a unique solution follows from the strong uniqueness of \eqref{eq:MVSDE}, by Theorem 1.1 of \cite{sznitman1991topics}. The estimate follows from the proof of Lemma 3.1 in \cite{buckdahn2017mean}. 
\end{proof} 

The following result is a modified version of Proposition 3.4.3 in \cite{cardaliaguet2015master} from H\"{o}lder spaces to Sobolev spaces. 

\begin{proposition}
\label{thm schauder}
Let $n \geq 1$, $f \in  W^{0,n-1,\infty}([0,T] \times {\mathbb T}^d)$ and $g \in W^{0,n-1,\infty}([0,T] \times {\mathbb T}^d, \bR^d)$. 
Then, for any $z_{T} \in  W^{n, \infty}({\mathbb T}^d)$, the Cauchy problem
    \begin{equation}  \begin{cases}
    \partial_t z + \Delta z + g(t,x) \cdot \nabla z = f(t,x), \quad \quad (t,x) \in [0,T] \times \bT^d,\\
            z(T,x) = z_{T}(x),
    \end{cases}   \label{cauchy w:appendix:2} 
    \end{equation}
has a unique solution
in the following space:
\begin{equation*}
\begin{split}
{\mathcal V} &= \Bigl\{ z \in {\mathcal C}([0,T] \times {\mathbb T}^d,{\mathbb R}) \cap {\mathcal C}^{0,1}([0,T) \times {\mathbb T}^d ) \cap W^{1,2,d+1}_{\textrm{\rm loc}}([0,T) \times {\mathbb T}^d) \, \bigg| 
\\
&\qquad \exists \gamma >0: \quad \sup_{(t,x) \in [0,T) \times {\mathbb T}^d} \bigl( \vert z(t,x) \vert + 
(T-t)^{1/2-\gamma} \vert \nabla_{x} z(t,x) \vert \bigr) < \infty \Bigr\},
\end{split}
\end{equation*}
where 
${\mathcal C}^{0,1}([0,T) \times {\mathbb T}^d,{\mathbb R})$ is the space of
real-valued functions $z$ (on $[0,T) \times {\mathbb T}^d$) that are continuous in time and space, differentiable in 
space, and the derivative of which is continuous in time and space, 
and where 
$W^{1,2,d+1}_{\textrm{\rm loc}}([0,T) \times {\mathbb T}^d)$
is the space of  functions $z$ such that $\vert z\vert$, 
$\vert \nabla_{x} z \vert$, $\vert \nabla_{x}^2 z \vert$ 
and $\vert \partial_{t} z \vert$ belong to $L^{d+1}_{\textrm{\rm loc}}([0,T) \times {\mathbb T}^d)$. 
The unique solution satisfies
\begin{equation*}
\sup_{t \in [0,T]} \| z(t,\cdot) \|_{n,\infty} \leq C \Bigl( \| z_{T} \|_{n,\infty} + 
\sup_{t \in [0,T]} \|f(t,\cdot) \|_{n-1,\infty} \Bigr),
\end{equation*}
where $C$ only depends on $\| g \|_{n-1,\infty}$. 
\end{proposition}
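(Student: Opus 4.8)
The plan is to reduce \eqref{cauchy w:appendix:2} to a forward parabolic Cauchy problem by time reversal, to prove the quantitative $W^{n,\infty}$-bound by induction on $n$ (base case: a Duhamel estimate closed by a singular Gr\"onwall inequality; inductive step: a differentiation that trades one spatial derivative for a drop from level $n$ to level $n-1$), and to get existence, membership in $\cV$ and uniqueness from a mollification/compactness argument together with the probabilistic uniqueness argument of \cite{cardaliaguet2015master}. Since $g,f$ are only bounded in space and $f$ merely measurable in time, I would first carry out the estimate for smooth data: with $\tau=T-t$ and $w(\tau,\cdot)=z(T-\tau,\cdot)$, the equation becomes $\partial_\tau w=\Delta w+\tilde g\cdot\nabla w-\tilde f$, $w(0,\cdot)=z_T$, where $\tilde g(\tau,\cdot)=g(T-\tau,\cdot)$ and $\tilde f(\tau,\cdot)=f(T-\tau,\cdot)$; mollifying $g,f,z_T$ in both variables produces smooth $g^\eps,f^\eps,z_T^\eps$ with $\|g^\eps\|_{0,n-1,\infty}\le\|g\|_{0,n-1,\infty}+o(1)$, $\sup_\tau\|f^\eps(\tau,\cdot)\|_{n-1,\infty}\le\sup_t\|f(t,\cdot)\|_{n-1,\infty}+o(1)$ and $\|z_T^\eps\|_{n,\infty}\le\|z_T\|_{n,\infty}+o(1)$, and classical parabolic theory yields smooth solutions $w^\eps$ on $[0,T]\times\bT^d$ on which all differentiations below are legitimate.

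\textbf{Base case $n=1$.} Writing $(P_\sigma)_{\sigma\ge0}$ for the heat semigroup on $\bT^d$, Duhamel's formula reads
\[
w^\eps(\tau,\cdot)=P_\tau z_T^\eps+\int_0^\tau P_{\tau-\sigma}\big(\tilde g^\eps(\sigma,\cdot)\cdot\nabla w^\eps(\sigma,\cdot)-\tilde f^\eps(\sigma,\cdot)\big)\,d\sigma .
\]
Applying $\nabla$, and using $\|\nabla P_\sigma h\|_{0,\infty}\le\|h\|_{1,\infty}$ on the first term together with the smoothing bound $\|\nabla P_\sigma h\|_{0,\infty}\le C\sigma^{-1/2}\|h\|_{0,\infty}$ on the integral, one obtains
\[
\|\nabla w^\eps(\tau,\cdot)\|_{0,\infty}\le\|z_T^\eps\|_{1,\infty}+C\int_0^\tau(\tau-\sigma)^{-1/2}\Big(\|g^\eps\|_{0,0,\infty}\,\|\nabla w^\eps(\sigma,\cdot)\|_{0,\infty}+\|f^\eps(\sigma,\cdot)\|_{0,\infty}\Big)\,d\sigma .
\]
Because the kernel $(\tau-\sigma)^{-1/2}$ is integrable, the fractional (Henry) Gr\"onwall lemma gives $\sup_\tau\|\nabla w^\eps(\tau,\cdot)\|_{0,\infty}\le C\big(\|z_T\|_{1,\infty}+\sup_t\|f(t,\cdot)\|_{0,\infty}\big)+o(1)$ with $C$ depending only on $\|g\|_{0,0,\infty},T,d$; combined with the maximum-principle bound $\sup_\tau\|w^\eps(\tau,\cdot)\|_{0,\infty}\le\|z_T\|_{0,\infty}+T\sup_t\|f(t,\cdot)\|_{0,\infty}+o(1)$ (equivalently, Feynman--Kac), this is the case $n=1$.

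\textbf{Inductive step.} Assume $n\ge2$ and the estimate at level $n-1$ for arbitrary admissible data. For $i\in\{1,\dots,d\}$, $\partial_{x_i}w^\eps$ solves the same forward equation with coefficient $\tilde g^\eps$, initial datum $\partial_{x_i}z_T^\eps\in W^{n-1,\infty}(\bT^d)$ and source $\partial_{x_i}\tilde f^\eps-(\partial_{x_i}\tilde g^\eps)\cdot\nabla w^\eps$. First, applying the level-$(n-1)$ estimate to $w^\eps$ itself — admissible since $z_T^\eps\in W^{n,\infty}\subset W^{n-1,\infty}$, $f^\eps\in W^{0,n-1,\infty}\subset W^{0,n-2,\infty}$ and $g^\eps\in W^{0,n-1,\infty}\subset W^{0,n-2,\infty}$ — bounds $\sup_\tau\|w^\eps(\tau,\cdot)\|_{n-1,\infty}$ by $C(\|z_T\|_{n,\infty}+\sup_t\|f(t,\cdot)\|_{n-1,\infty})+o(1)$; since $\partial_{x_i}\tilde f^\eps,\partial_{x_i}\tilde g^\eps\in W^{0,n-2,\infty}$ and $W^{n-2,\infty}(\bT^d)$ is an algebra, the new source is bounded in $W^{0,n-2,\infty}$ by the same quantity, the constant depending only on $\|g\|_{0,n-1,\infty}$. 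Applying the level-$(n-1)$ estimate to $\partial_{x_i}w^\eps$ then yields $\sup_\tau\|\partial_{x_i}w^\eps(\tau,\cdot)\|_{n-1,\infty}\le C(\|z_T\|_{n,\infty}+\sup_t\|f(t,\cdot)\|_{n-1,\infty})+o(1)$; summing over $i$ and adding the $L^\infty$-bound closes the induction, the constant still depending only on $\|g\|_{0,n-1,\infty},T,d$.

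\textbf{Passage to the limit, membership in $\cV$, uniqueness, and the main difficulty.} The $\eps$-uniform bound $\sup_\tau\|w^\eps(\tau,\cdot)\|_{n,\infty}\le C$ together with the equation gives compactness of $(w^\eps)$ in $\cC([0,T]\times\bT^d)$ with weak-$*$ convergence of the spatial derivatives up to order $n$; the limit $w$ solves the forward problem and satisfies the estimate, hence so does $z$ after undoing the time reversal. Since $z$ is uniformly Lipschitz in space and, by interior parabolic regularity (the right-hand side $f-g\cdot\nabla z$ being in $L^\infty$), lies in $\cC^{0,1}([0,T)\times\bT^d)\cap W^{1,2,d+1}_{\mathrm{loc}}([0,T)\times\bT^d)$, it belongs to $\cV$; uniqueness in $\cV$ follows exactly as in Proposition 3.4.3 of \cite{cardaliaguet2015master}, since the difference of two $\cV$-solutions has zero terminal datum and, by It\^o's formula applied up to time $T$ along the characteristics $dX_s=g(s,X_s)\,ds+\sqrt{2}\,dW_s$ (which the $(T-t)^{1/2-\gamma}$ gradient control in the definition of $\cV$ permits), equals $\bE[0]=0$, using only boundedness of $g$. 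The main obstacle is the base case: the gradient estimate must be closed through a singular rather than a naive Gr\"onwall inequality, and, crucially, its constant must be shown to depend only on $\|g\|_{0,0,\infty}$ — otherwise the induction would accumulate dependence on higher-order norms of $g$, contradicting the claimed dependence of $C$ on $\|g\|_{0,n-1,\infty}$ alone. The mollification/limiting step and the $\cV$-uniqueness are, by contrast, the routine Sobolev-space transcription of Proposition 3.4.3 of \cite{cardaliaguet2015master}.
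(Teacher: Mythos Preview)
Your proof is correct and follows essentially the same approach as the paper: mollify, use Duhamel with the heat-kernel gradient estimate and the singular (Henry) Gr\"onwall lemma for $n=1$, then induct by differentiating (the paper differentiates the Duhamel representation and integrates by parts, you differentiate the PDE---these are equivalent), and finally pass to the limit. The only cosmetic difference is that the paper identifies the limit via the stochastic representation \eqref{eq:proof:prop:6:2:b} rather than your compactness argument, which makes both the convergence of $w^\eps$ and the membership in $\cV$ slightly more direct.
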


\begin{proof}
\textit{First Step.}
We start with uniqueness. Uniqueness of a solution (in ${\mathcal V}$) is a trivial consequence of the solvability of the SDE:
\begin{equation}
\label{eq:proof:prop:6:2}
dX_{t} = g(t,X_{t}) dt + dB_{t},
\end{equation}
and, then, of It\^o-Krylov's formula (see for instance \cite{krylov2008controlled}), which guarantees that 
\begin{equation}
\label{eq:proof:prop:6:2:b}
z(t,x) = {\mathbb E} \biggl[ z_{T}(X_{T}) + \int_{t}^T f(s,X_{s}) ds \, \big\vert \, X_{t} = x
\biggr].
\end{equation}
Notice that It\^o's formula does not suffice since the solution may just have
first order $t$-derivative and 
second order $x$-derivatives in $L^{d+1}$. Obviously, It\^o-Krylov's formula here applies because of the non-degeneracy of the noise.  
\vskip 5pt

\textit{Second Step.}
Existence of a solution with generalized second order derivatives 
is a well known fact in the literature. The main reference is the monograph of 
Ladyzenskaja et al., \cite{lady_book}; a more precise application of the results of 
\cite{lady_book}
to our setting may be found in  
\cite{DelarueGuatteri}, see Theorem 2.1 therein. 
The latter says that existence of a solution hold in the space
${\mathcal V}$ defined in the statement. 
\vskip 5pt

\textit{Third Step.}
Now, the main point is to prove that the solution satisfies the required bounds. 
By mollifying the coefficients $f$ and $g$ in space (using a standard convolution argument), 
we may easily assume that the coefficients $f$ and $g$ are smooth in space, 
and that their derivatives up the order $n-1$ satisfy the same Lipschitz bounds as the 
original coefficients. If we can prove that the solution associated with the equation with mollified coefficients 
satisfies the inequality announced in the statement, with a constant $C$ therein that remains uniform along 
the mollification, then we are done: it suffices to observe that
the solution associated with the mollified equation
converges (as the mollification parameter tends to $0$) to the original $z$ by passing to the limit along the 
stochastic representation (based upon (\ref{eq:proof:prop:6:2})--(\ref{eq:proof:prop:6:2:b})). 

So, from now on, we assume that the coefficients $f$ and $g$ are smooth in space, 
and that their derivatives up the order $n-1$ satisfy the same Lipschitz bounds as the 
original coefficients. 
The key point is then to observe that we can differentiate with respect to $x$ in 
the representation formula 
\eqref{eq:proof:prop:6:2:b}, since 
the solution to 
\eqref{eq:proof:prop:6:2} generates a smooth flow (see for instance \cite{Kunita}). 
As a by-product, we deduce that, for any $k \geq 1$, 
\begin{equation*}
\sup_{t \in [0,T]} \| \nabla^k z(t,\cdot)\|_{\infty} < \infty.
\end{equation*}
Obviously, the bound of the above left-hand side depends on the (additional) smoothness
of $f$ and $g$. 
Now, by expanding $(z(T-s,x+B_{s-t}))_{t \leq s \leq T}$ by means of It\^o's formula, we get that 
\begin{equation}
\label{eq:representation:z}
\begin{split}
z(t,x) &= \int_{{\mathbb T}^d} z_{T}(y) p(T-t,y-x) dy + 
 \int_{t}^T \int_{{\mathbb T}^d} g(s,y) \cdot \nabla z(s,y) p(s-t,x-y) ds dy
\\
&\hspace{15pt}
+ \int_{t}^T \int_{{\mathbb T}^d} f(s,y) p(s-t,x-y) ds dy,  
\end{split}
\end{equation}
where $p$ is the standard heat kernel. {We know from}
Theorem 11 in \cite[Chapter 1]{friedman2008partial} that there exists a bounded density
 $g$
on the torus such that, for any $0 \leq s < s' $ with $s'-s \leq 1$, 
\begin{equation}
\label{eq:gradient:transition:density}
\bigl\vert \nabla_{x} p(s,s',x,x') \bigr\vert \leq C (s'-s)^{-(d+1)/2}
g \Bigl( \frac{x'-x}{s'-s} \Bigr),
\end{equation}
for a constant $C$ only depending on the bound of $V$. Taking the derivative with respect to $x$, 
\begin{equation*}
\begin{split}
\bigl\| \nabla z(t,\cdot) \bigr\|_{\infty}
\leq  \| \nabla z_{T} \|_{\infty} +C  \int_{t}^T (s-t)^{-1/2} \Bigl( \| f(s,\cdot)\|_{\infty} +  \bigl\| \nabla z(s,\cdot) \|_{\infty} \Bigr) ds,
\end{split}
\end{equation*}
where the constant $C$ depends on $\| g \|_{\infty}$ (and is allowed to vary from line to line). 
By a standard variant of Gronwall's lemma (see for instance 
\cite[Lemma 7.1.1 and Exercise 1]{Henry_book}), we get
\begin{equation*}
\bigl\| \nabla z(t,\cdot) \bigr\|_{\infty}
\leq  C \Bigl( \| \nabla z_{T} \|_{\infty} +  \| f\|_{\infty}   \Bigr),
\end{equation*}
which is  exactly the announced result when $n=1$. 
Differentiating twice 
\eqref{eq:representation:z}
(hence differentiating twice the heat kernel in the right-hand side),
performing an integration by parts in 
the resulting second and third terms in the right-hand side of 
\eqref{eq:representation:z}
and 
eventually 
plugging the above bound 
in the resulting 
formula, we then get 
\begin{equation*}
\begin{split}
\bigl\| \nabla^2 z(t,\cdot) \bigr\|_{\infty}
\leq  \| \nabla^2 z_{T} \|_{\infty} +C  \int_{t}^S (s-t)^{-1/2} \Bigl( \| f(s,\cdot)\|_{1,\infty} +  \bigl\| \nabla z(s,\cdot) \|_{\infty}
+ \bigl\| \nabla^2 z(s,\cdot) \|_{\infty} \Bigr) ds,
\end{split}
\end{equation*}
where $C$ now depends on $\| g \|_{1,\infty}$, 
and in turn
\begin{equation*}
\begin{split}
\bigl\| \nabla^2 z(t,\cdot) \bigr\|_{\infty}
\leq  C \| z_{T} \|_{2,\infty} +C  \int_{t}^S (s-t)^{-1/2} \Bigl( \| f(s,\cdot)\|_{1,\infty} 
+ \bigl\| \nabla^2 z(s,\cdot) \|_{\infty} \Bigr) ds,
\end{split}
\end{equation*}
which yields, by the same variant of Gronwall's lemma, 
\begin{equation*}
\begin{split}
\sup_{t \in [0,S]}\bigl\| \nabla^2 z(t,\cdot) \bigr\|_{\infty}
\leq  C \Bigl( \| z_{T} \|_{2,\infty} +  \| f \|_{1,\infty}   \Bigr).
\end{split}
\end{equation*}
This is exactly the desired result when $n=2$. 

Now, we can iterate 
by induction, assuming that the result holds true for a given $k \in \{2,\cdots,n-1\}$. It suffices to take $k+1$ derivatives (in $x$) in the left-hand side of
\eqref{eq:representation:z}
and 
then 
to use an integration by parts to pass 
 $k$ derivatives from the heat kernel onto 
 $f$ and $g \cdot \nabla z$ 
  in 
the resulting second and third terms in the right-hand side of 
\eqref{eq:representation:z}. We then get 
\begin{equation*}
\begin{split}
\bigl\| \nabla^{k+1} z(t,\cdot) \bigr\|_{\infty}
\leq  \| z_{T} \|_{k+1,\infty} +C  \int_{t}^T (s-t)^{-1/2} \Bigl( \bigl\| f(s,\cdot) \bigr\|_{k,\infty} + 
 \bigl\| \nabla^{k+1} z(s,\cdot) \bigr\|_{\infty}
+ \bigl\| z(s,\cdot) \bigr\|_{k,\infty} \Bigr) ds,
\end{split}
\end{equation*}
where $C$ now depends on $\|g \|_{k,\infty}$. 
Plugging the bound, we have, for 
$\| z(s,\cdot) \|_{k,\infty}$ (as given by the induction assumption), 
we get 
\begin{equation*}
\begin{split}
\bigl\| \nabla^{k+1} z(t,\cdot) \bigr\|_{\infty}
\leq  \| z_{T} \|_{k+1,\infty} +C  \int_{t}^T (s-t)^{-1/2} \Bigl( \bigl\| f(s,\cdot) \bigr\|_{k,\infty} + 
 \bigl\| \nabla^{k+1} z(s,\cdot) \bigr\|_{\infty} \Bigr) ds,
\end{split}
\end{equation*}
and, then, refined Gronwall's lemma applies as before. 
\end{proof}

The core analysis of forward Kolmogorov equations depends heavily on the following fact.  The main ideas of the proof follow from the proof of Lemma 3.3.1 in \cite{cardaliaguet2015master}.
\begin{theorem}[Bound for forward Kolmogorov equations] \label{lions paper pde result} Let $n \geq 1$ and $q_0 \in ( W^{n,\infty}(\mathbb{T}^d) )'$. Assume \emph{\regb[n]}.
Let $ r \in L^{\infty} \big([0,T], ( W^{n, \infty}(\mathbb{T}^d) )' \big)$.  Then the Cauchy problem defined by
\begin{equation}  \begin{cases}
    \partial_t q(t) - \Delta q(t) + \text{\emph{div}} \big( b(\cdot,m(t,\mu)) q(t) \big) + \text{\emph{div}} \Big(  m(t, \mu)\frac{\delta b}{\delta m}( \cdot, m(t,\mu) )(q(t))\Big) -  r(t)  = 0, \\
             q(0) = q_0,
    \end{cases}   \label{eq q} \end{equation}
   interpreted as 
    \begin{eqnarray}
&&  \int_{\bT^d} \phi(t,y)  \, q(t) (dy) - \int_{\bT^d} \phi(0,y)  \,  \, q(0) (dy)  \nonumber \\
& = & \int_0^t \int_{\bT^d} \partial_s \phi(s,y)  \, q(s)  (dy)  \,ds + \int_0^t \int_{\bT^d} \Delta \phi(s,y)  \, q(s) (dy)  \,ds \nonumber \\
&& + \int_0^t \int_{\bT^d} \Big[ b \big( y, m(s, \mu) \big) \cdot \nabla \phi(s,y) \Big]  \,q(s) (dy)  \,ds \nonumber \\
&& +  \int_0^t \int_{\bT^d}  \int_{\bT^d} \bigg[  \frac{\delta b}{\delta m} \big( x, m(s,\mu) \big)(y) \cdot \nabla \phi(s,x) \bigg] \,  q(s) (dy)   \,  m(s, \mu)  (dx) \,ds \nonumber  \\
&& + \int_0^t \lev \phi(s,\cdot), r(s)   \rev_{n, \infty} \, ds, \nonumber 
\end{eqnarray}
for each $\phi \in C^{\infty}([0,T] \times \bT^d)$,
 has a unique solution in $L^{\infty} \big([0,T], (W^{n, \infty}(\mathbb{T}^d))' \big)$ such that
$$  \sup_{t \in [0,T]}  \| q(t) \|_{-(n,\infty)} \leq C \bigg( \| q_0 \|_{-(n,\infty)}  + \sup_{t \in [0,T]} \| r (t) \|_{-(n,\infty)} \bigg) , $$ 
for some constant $C>0$. \end{theorem}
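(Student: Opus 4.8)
The plan is to solve \eqref{eq q} by a duality argument against the backward equation handled in Proposition \ref{thm schauder}, exactly in the spirit of Lemma 3.3.1 of \cite{cardaliaguet2015master}. For existence, I would first establish the a priori estimate, which simultaneously gives uniqueness. Fix a terminal time $t \in [0,T]$ and an arbitrary test function $\xi \in W^{n,\infty}(\bT^d)$ with $\|\xi\|_{n,\infty} \leq 1$; let $\phi = \phi^{\xi,t}$ be the solution on $[0,t]$ of the backward Cauchy problem
\begin{equation*}
\begin{cases}
\partial_s \phi + \Delta \phi + b(\cdot, m(s,\mu)) \cdot \nabla \phi = 0, \\
\phi(t,\cdot) = \xi,
\end{cases}
\end{equation*}
which by Proposition \ref{thm schauder} (with $f \equiv 0$, $g = b(\cdot,m(\cdot,\mu))$, whose $W^{n-1,\infty}$-norm is controlled under \regb[n]) satisfies $\sup_{s \in [0,t]} \|\phi(s,\cdot)\|_{n,\infty} \leq C\|\xi\|_{n,\infty} \leq C$. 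Plugging this $\phi$ into the weak formulation of \eqref{eq q}, the first three right-hand side terms telescope against the left-hand side and cancel, leaving
\begin{equation*}
\lev \xi, q(t) \rev_{n,\infty} = \lev \phi(0,\cdot), q_0 \rev_{n,\infty} + \int_0^t \int_{\bT^d}\int_{\bT^d} \Big[\frac{\delta b}{\delta m}(x,m(s,\mu))(y) \cdot \nabla \phi(s,x)\Big] q(s)(dy)\, m(s,\mu)(dx)\, ds + \int_0^t \lev \phi(s,\cdot), r(s)\rev_{n,\infty}\, ds.
\end{equation*}
The first term is bounded by $C\|q_0\|_{-(n,\infty)}$ and the last by $C\int_0^t \|r(s)\|_{-(n,\infty)}\, ds$. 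For the middle term, the key observation is that $x \mapsto \frac{\delta b}{\delta m}(x, m(s,\mu))(\cdot)$ has $W^{n,\infty}(\bT^d)$-norm (in the second argument) bounded uniformly by \regb[n], so that $x \mapsto \int_{\bT^d} \frac{\delta b}{\delta m}(x,m(s,\mu))(y)\, q(s)(dy)$ is a bounded vector field with $\sup_x|\cdot| \leq C\|q(s)\|_{-(n,\infty)}$; pairing against $\nabla\phi(s,x) \in W^{n-1,\infty}$ and integrating $m(s,\mu)(dx)$ gives a bound $C\|q(s)\|_{-(n,\infty)}$. Taking the supremum over $\|\xi\|_{n,\infty}\leq 1$ yields
\begin{equation*}
\|q(t)\|_{-(n,\infty)} \leq C\|q_0\|_{-(n,\infty)} + C\int_0^t \|r(s)\|_{-(n,\infty)}\, ds + C\int_0^t \|q(s)\|_{-(n,\infty)}\, ds,
\end{equation*}
and Gronwall's lemma closes the estimate. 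The same computation applied to the difference of two solutions (with $q_0 = 0$, $r = 0$) gives uniqueness.

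For existence itself, I would run a fixed-point/Picard iteration: define $q^{0} \equiv q_0$ (extended trivially in time, or as the solution of the pure heat-plus-transport part), and let $q^{k+1}$ solve the linearised equation in which the coupling term $\text{div}(m(t,\mu)\frac{\delta b}{\delta m}(\cdot,m(t,\mu))(q^{k}(t)))$ is treated as a known source built from $q^{k}$; each such equation is a classical linear Fokker-Planck (forward Kolmogorov) equation with bounded drift and a source in $L^\infty([0,T],(W^{n,\infty})')$, which is solvable in the stated space by standard theory (again via duality with Proposition \ref{thm schauder}, now with non-zero $f$). The a priori estimate above shows the map $q^{k} \mapsto q^{k+1}$ is, on a short time interval, a contraction in $L^\infty([0,\tau],(W^{n,\infty})')$ — the relevant constant in front of $\int_0^\tau \|\cdot\|$ can be made $<1$ by taking $\tau$ small — and then iterating over finitely many intervals of length $\tau$ extends the solution to $[0,T]$. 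Alternatively one may simply invoke the stochastic representation: $q(t)$ is the (signed) measure characterised by $\lev \xi, q(t)\rev = \lev \phi^{\xi,t}(0,\cdot), q_0\rev + \ldots$, and verify directly that this defines an element of $L^\infty([0,T],(W^{n,\infty})')$ solving the weak formulation.

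The main obstacle is the nonlocal coupling term involving $\frac{\delta b}{\delta m}$: unlike a standard Fokker-Planck equation, \eqref{eq q} is not a pure advection-diffusion of $q$ but contains a term in which $q(s)$ is integrated against a kernel and then multiplied by the fixed measure $m(s,\mu)$. Controlling it requires precisely the structural fact that $\frac{\delta b}{\delta m}(x,m)(\cdot)$ lies in $W^{n,\infty}$ with a norm uniform in $x$ and $m$ — this is the content of \regb[n] — so that testing against $q(s)$ produces something of size $\|q(s)\|_{-(n,\infty)}$ rather than something uncontrollable; and it requires that pairing the resulting bounded field with $\nabla\phi$ only costs one spatial derivative of $\phi$, which is why Proposition \ref{thm schauder} is stated with the gain of one derivative ($\|z\|_{n,\infty}$ from $\|f\|_{n-1,\infty}$). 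Once these two points are in place the argument is a routine Gronwall estimate; the delicate bookkeeping is making sure every norm that appears is the dual Sobolev norm at the correct order $n$ and that the constant $C$ depends only on $\|b(\cdot,m)\|_{n,\infty}$-type quantities and not on $q$ or $r$.
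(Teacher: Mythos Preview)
Your a priori estimate and uniqueness argument via duality with the backward equation from Proposition \ref{thm schauder} is exactly what the paper does: the paper introduces the same backward solution $w$ with terminal datum $\xi$, pairs it against the weak formulation, obtains the identity you wrote, bounds the three surviving terms using \regb[n] and the Schauder bound $\sup_s \|w(s,\cdot)\|_{n,\infty} \leq C\|\xi\|_{n,\infty}$, and closes by Gronwall. So on that half of the proof you are aligned with the paper.

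For existence, however, the paper takes a different route. Rather than Picard iteration, it works in the H\"older space $X = C^\beta([0,T],(W^{n,\infty})')$ with $\beta \in (0,\tfrac12)$, defines the map $T(q) = \tilde q$ where $\tilde q$ solves the decoupled equation with the $\frac{\delta b}{\delta m}$-term built from $q$ as a known source, argues that $T$ is continuous and compact (the H\"older regularity in time is what furnishes compactness), and then applies the Leray--Schauder fixed-point theorem. The a priori bound you derived is precisely what is needed to show that the set $\{q : q = \sigma Tq \text{ for some } \sigma \in [0,1]\}$ is bounded in $X$; the paper additionally establishes the time-H\"older seminorm bound by revisiting the duality identity over $[t',t]$. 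Your contraction approach is more elementary and avoids both the compactness argument and the auxiliary H\"older space --- since the coupling term is linear in $q$, the map $q^k \mapsto q^{k+1}$ contracts on $L^\infty([0,\tau],(W^{n,\infty})')$ for small $\tau$ and one patches. The Leray--Schauder route is closer to the structure of Lemma 3.3.1 in \cite{cardaliaguet2015master} (where the analogous map is genuinely nonlinear and a contraction is not available), which is presumably why the paper adopts it; in the present linear setting your Picard argument is both correct and shorter.
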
 
\begin{proof}
We consider the space $X:= C^{\beta}([0,T], (W^{\na}(\bT^d))')$, where $\beta \in (0, \frac{1}{2})$. We recall that the norm of $X$ is given by
$$ \| q\|_X := \sup_{t \in [0,T]} \| q(t) \|_{-(\na)} + \sup_{t \neq t'} \frac{\|q(t) - q(t') \|_{-(\na)} }{|t-t'|^{\beta}} .$$ 
For $q \in X$, we consider the Cauchy problem
\begin{equation}  \begin{cases}
    \partial_t \tilde{q}(t) - \Delta \tilde{q}(t) + \text{div} \big( b(\cdot,m(t,\mu)) \tilde{q}(t) \big) + \text{div}  \Big(  m(t, \mu)\frac{\delta b}{\delta m}( \cdot, m(t,\mu) )(q(t)) \Big) -   r(t)  = 0, \\
             \tilde{q}(0) = q_0.
    \end{cases}   \label{q prime} \end{equation}
By Schauder estimates, setting $T(q):= \tilde{q}$ defines a continuous and compact map $T: X \to X$. (See Step 1 in the proof of Lemma 3.3.1 in \cite{cardaliaguet2015master}). We show the existence of solution to \eqref{eq q} by applying the Leray-Schauder theorem, i.e. by showing that the set
$$ X_0:= \big\{ q \in X \, \, \big| \, \, q = \sigma Tq \text{ for some } \sigma \in [0,1] \big\}$$ 
is bounded. To this end, we pick an arbitrary $q \in X_0$, which satisfies the Cauchy problem
\begin{equation}  \begin{cases}
    \partial_t {q}(t) - \Delta {q}(t) + \text{div} \big( b(\cdot,m(t,\mu)) {q}(t) \big) - \sigma \Big( \text{div}  \Big(  m(t, \mu)\frac{\delta b}{\delta m}( \cdot, m(t,\mu) )(q(t)) \Big) -   r(t) \Big) = 0, \\
            {q}(0) = \sigma q_0.
    \end{cases}   \label{q X0} \end{equation}
The estimates rely on the classical argument of duality pairing. Fix $t \in [0,T]$ and $\xi \in W^{\na} (\bT^d)$. Let $w$ be the solution to the Cauchy problem
\begin{equation}  \begin{cases}
    \partial_s w + \Delta w + b(x, m(s,\mu)) \cdot \nabla w = 0, \quad \quad (s,x) \in [0,t] \times \bT^d, \\
            w(t,x) = \xi(x).
    \end{cases}   \label{cauchy w} \end{equation}
    By Theorem \ref{thm schauder},  $w$ satisfies
    \begin{equation}
        \sup_{s \in [0,t]} \| w(s,\cdot) \|_{\na}    \leq C \| \xi \|_{ \na}. \label{eq: bound w schauder} 
    \end{equation}
    By the definition of \eqref{q X0}, we have
\begin{eqnarray}
&&  \int_{\bT^d} w(t,y)  \, q(t) (dy) - \sigma \int_{\bT^d} w(0,y)  \,  \, q(0) (dy)  \nonumber \\
& = & \int_0^t \int_{\bT^d} \partial_s w(s,y)  \, q(s)  (dy)  \,ds + \int_0^t \int_{\bT^d} \Delta w(s,y)  \, q(s) (dy)  \,ds \nonumber \\
&& + \int_0^t \int_{\bT^d} \Big[ b \big( y, m(s, \mu) \big) \cdot \nabla w(s,y) \Big]  \,q(s) (dy)  \,ds \nonumber \\
&& + \sigma \int_0^t \int_{\bT^d}  \int_{\bT^d} \bigg[  \frac{\delta b}{\delta m} \big( x, m(s,\mu) \big)(y) \cdot \nabla w(s,x) \bigg] \,  q(s) (dy)   \,  m(s, \mu)  (dx) \,ds \nonumber  \\
&& + \sigma \int_0^t \lev w(s,\cdot), r(s)   \rev_{n,\infty} \, ds. \nonumber 
\end{eqnarray}
Therefore, by \eqref{cauchy w}, 
\begin{eqnarray}
&&  \int_{\bT^d} \xi(y) \, q(t) (dy)  \nonumber \\
& = & \sigma \int_{\bT^d} w(0,y)  \,  \, q(0) (dy)  + \sigma
 \int_0^t \int_{\bT^d}  \int_{\bT^d} \bigg[  \frac{\delta b}{\delta m} \big( x, m(s,\mu) \big)(y) \cdot \nabla w(s,x) \bigg] \,  q(s) (dy)   \,  m(s, \mu)  (dx) \,ds \nonumber  \\
&& + \sigma \int_0^t \lev w(s,\cdot), r(s)   \rev_{n,\infty} \, ds. \label{eq: xi q equality} 
\end{eqnarray}
We now estimate each of the three terms on the right hand side by \eqref{eq: bound w schauder}.  Firstly,
\begin{equation} \bigg|  \sigma \int_{\bT^d} w(0,y)  \,  \, q(0) (dy)  \bigg|  \leq \| w(0, \cdot) \|_{n,\infty} \| q(0)\|_{-(n,\infty)} \leq C \| \xi \|_{n,\infty} \| q(0) \|_{-(n,\infty)}. \label{eq: bound q est 1} \end{equation} 
By \eqref{eq: bound w schauder} and \regb[n], we  obtain the estimate
\begin{eqnarray}
&& \bigg| \sigma \int_0^t \int_{\bT^d}  \int_{\bT^d} \bigg[  \frac{\delta b}{\delta m} \big( x, m(s,\mu) \big)(y) \cdot \nabla w(s,x) \bigg] \,  q(s) (dy)   \,  m(s, \mu)  (dx) \,ds \bigg| \nonumber \\
& \leq & \int_0^t \sup_{x \in \bT^d} \bigg| \intrd \bigg[ \frac{\delta b}{\delta m} \big( x, m(s,\mu) \big)(y)  \cdot \nabla w(s,x) \bigg] \,  q(s) (dy)   \bigg| \,ds \nonumber \\  
& \leq  & C  \sum_{i=1}^d \int_0^t \| q(s) \|_{-(\na)}  \| \partial_{x_i} w(s, \cdot) \|_{\infty} \,ds    \nonumber \\
& \leq & C \bigg( \sup_{s \in [0,T]} \| w(s, \cdot ) \|_{n,\infty} \bigg) \int_0^t  \| q(s)\|_{-(n,\infty)}  \,ds     \nonumber \\
& \leq & C   \| \xi \|_{n,\infty} \int_0^t  \| q(s)\|_{-(n,\infty)}  \,ds.   \label{eq: bound q est 2}  
\end{eqnarray}
Finally, by \eqref{eq: bound w schauder}, 
\begin{equation}
  \bigg| \sigma \int_0^t \lev w(s,\cdot), r(s)   \rev_{n,\infty}  \, ds \bigg| \leq C \| \xi \|_{n,\infty} \sup_{u \in [0,T]} \| r(u) \|_{-(n,\infty)}. \label{eq: bound q est 3}
    \end{equation}
By \eqref{eq: xi q equality}, along with estimates \eqref{eq: bound q est 1}, \eqref{eq: bound q est 2} and \eqref{eq: bound q est 3}, we have
$$ \|q(t)\|_{-(\na)} \leq C \bigg( \|q_0\|_{-(\na)} + \sup_{u \in [0,T]} \|r(u)\|_{-(\na)} + \int_0^t \|q(s)\|_{-(\na)}  \,ds \bigg),$$ 
which concludes by Gronwall's inequality that
\begin{equation}  \sup_{t \in [0,T]}  \| q(t) \|_{-(n,\infty)} \leq C \bigg( \| q_0 \|_{-(n,\infty)}  + \sup_{t \in [0,T]} \| r (t) \|_{-(n,\infty)} \bigg) .\label{gron est schauder} \end{equation} 
Now we pick $t, t' \in [0,T]$. Then \eqref{eq: xi q equality} becomes \begin{eqnarray}
&&  \int_{\bT^d} \xi(y) \, (q(t) - q(t')) (dy)  \nonumber \\
& = &\sigma
 \int_{t'}^t \int_{\bT^d}  \int_{\bT^d} \bigg[  \frac{\delta b}{\delta m} \big( x, m(s,\mu) \big)(y) \cdot \nabla w(s,x) \bigg] \,  q(s) (dy)   \,  m(s, \mu)  (dx) \,ds \nonumber  \\
&& + \sigma \int_{t'}^t \lev w(s, \cdot), r(s)   \rev_{n,\infty}  \, ds. \label{eq: xi q equality holder } 
\end{eqnarray} 
By combining \eqref{gron est schauder} with the argument of \eqref{eq: bound q est 2},  
\begin{eqnarray} 
&& \bigg| \sigma \int_0^t \int_{\bT^d}  \int_{\bT^d} \bigg[  \frac{\delta b}{\delta m} \big( x, m(s,\mu) \big)(y) \cdot \nabla w(s,x) \bigg] \,  q(s) (dy)   \,  m(s, \mu)  (dx) \,ds \bigg| \nonumber \\
& \leq &  C   \| \xi \|_{n,\infty} \int_{t'}^t  \| q(s)\|_{-(n,\infty)}  \,ds \nonumber \\
& \leq & C  \| \xi \|_{n,\infty} |t-t'|  \bigg( \| q_0 \|_{-(n,\infty)}  + \sup_{s \in [0,T]} \| r (s) \|_{-(n,\infty)} \bigg). \label{eq: xi q equality holder 2} 
\end{eqnarray}
Similarly,
\begin{equation}
  \bigg| \sigma \int_{t'}^t \lev w(s, \cdot), r(s)   \rev_{n,\infty}  \, ds \bigg| \leq  C |t-t'| \| \xi \|_{n,\infty} \sup_{s \in [0,T]} \| r(s) \|_{-(n,\infty)}. \label{eq: bound q est 3 holder}
    \end{equation}
Therefore, by combining \eqref{eq: xi q equality holder }, \eqref{eq: xi q equality holder 2}  and \eqref{eq: bound q est 3 holder}, we have
$$ \sup_{t \neq t'} \frac{\|q(t) - q(t') \|_{-(n,\infty)} }{|t-t'|^{\beta}}  \leq  C|t-t'|^{1- \beta} \bigg( \| q_0 \|_{-(n,\infty)}  + \sup_{s \in [0,T]} \| r (s) \|_{-(n,\infty)} \bigg). $$ 
Combining with \eqref{gron est schauder} gives
$$ \| q \|_X \leq C  \bigg( \| q_0 \|_{-(n,\infty)}  + \sup_{t \in [0,T]} \| r (t) \|_{-(n,\infty)} \bigg). $$ 
Consequently, by the Leray-Schauder theorem, the map $T$ admits a fixed point. This shows the existence of solution to \eqref{eq q}. For uniqueness, one simply has to apply a Gronwall argument to \eqref{eq: xi q equality}. Finally, the estimate for the solution follows by repeating the proof up to  \eqref{gron est schauder}, but with $\sigma =1$. 
\end{proof}

\begin{lemma}
Assume \emph{\regb[n]}, where $n \geq 1$. Then the Cauchy problem $m^{(1)}$ defined in \eqref{PDE linearised}  has a unique solution in $L^{\infty} \big([0,T], ( W^{n,\infty}(\mathbb{T}^d) )' \big)$. \label{ m1 regularity} \end{lemma}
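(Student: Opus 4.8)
The plan is to observe that the Cauchy problem \eqref{PDE linearised} for $m^{(1)}$ is nothing but a particular instance of the general forward Kolmogorov equation \eqref{eq q} already treated in Theorem \ref{lions paper pde result}. Comparing the two PDEs term by term, \eqref{PDE linearised} is exactly \eqref{eq q} with the source term taken to be $r \equiv 0$ and the initial datum taken to be $q_0 = \hat{\mu} - \mu$; moreover, the distributional formulation of \eqref{PDE linearised} — written out explicitly in \eqref{eq: derivative expansion}, tested against arbitrary $\phi \in C^{\infty}([0,T] \times \bT^d)$ — coincides verbatim with the weak formulation required in Theorem \ref{lions paper pde result} once the pairing term $\int_0^t \lev \phi(s,\cdot), r(s) \rev_{n,\infty}\, ds$ is dropped.

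The one point that needs a word of justification is that the initial condition indeed lies in the correct dual space, i.e. $q_0 := \hat{\mu} - \mu \in (W^{n,\infty}(\bT^d))'$ for every $n \geq 1$. This is immediate: the torus $\bT^d$ is compact, so any probability measure on $\bT^d$ defines a bounded linear functional on $C(\bT^d)$, and for $n \geq 1$ one has the continuous embedding $W^{n,\infty}(\bT^d) \hookrightarrow W^{1,\infty}(\bT^d) \hookrightarrow C(\bT^d)$; hence $\hat{\mu} - \mu$ restricts to a well-defined element of $(W^{n,\infty}(\bT^d))'$. (In fact $\| \hat{\mu} - \mu \|_{-(1,\infty)} \leq W_1(\mu,\hat{\mu})$ by Kantorovich--Rubinstein duality, and a fortiori $\| \hat{\mu} - \mu \|_{-(n,\infty)} < +\infty$.)

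With these identifications in hand, the conclusion follows directly: under the standing hypothesis \regb[n] — which is precisely the assumption invoked in Theorem \ref{lions paper pde result} — that theorem, applied with $q_0 = \hat{\mu} - \mu$ and $r \equiv 0$, yields existence and uniqueness of a solution $m^{(1)}(\cdot,\mu,\hat{\mu}) \in L^{\infty}\big([0,T], (W^{n,\infty}(\bT^d))'\big)$, together with the quantitative bound $\sup_{t \in [0,T]} \| m^{(1)}(t,\mu,\hat{\mu}) \|_{-(n,\infty)} \leq C \| \hat{\mu} - \mu \|_{-(n,\infty)} \leq C\, W_1(\mu,\hat{\mu})$. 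There is no genuine obstacle here; the only care required is in matching the two weak formulations and in checking that the initial datum belongs to $(W^{n,\infty}(\bT^d))'$, both of which are routine. I would also note in passing that the argument is insensitive to the specific form of the initial datum: it works identically for any $q_0 \in (W^{n,\infty}(\bT^d))'$, which is what will be needed when iterating to higher-order derivatives.
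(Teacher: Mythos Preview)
Your proposal is correct and follows the same approach as the paper, which simply states that the result is immediate from Theorem \ref{lions paper pde result}. You have merely spelled out the identification $q_0 = \hat{\mu} - \mu$, $r \equiv 0$ and checked membership of $q_0$ in $(W^{n,\infty}(\bT^d))'$, which is exactly the content of ``immediate'' here.
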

\begin{proof}
This is immediate from Theorem \ref{lions paper pde result}.
\end{proof}
For every $t \in [0,T]$, $\mu, \hmu \in \cP(\bT^d)$, let 
$$ \rho(t, \mu, \hat{\mu}) := {m}(t, \hat{\mu}) - m(t, {\mu}) - m^{(1)}(t, \mu,\hat{\mu}).$$ 
Let $\phi \in C^{\infty} ([0,T] \times \bT^d)$. By \eqref{eq: forward eqn } and \eqref{PDE linearised}, we have
\begin{eqnarray}
&&  \int_{\bT^d} \phi(t,y)  \, \rho(t, \mu,\hat{\mu}) (dy) - \int_{\bT^d} \phi(0,y)  \, \rho(0, \mu,\hat{\mu}) (dy)  \nonumber \\
& = & \int_0^t \int_{\bT^d} \partial_s \phi(s,y)  \, \rho(s, \mu, \hat{\mu}) (dy)  \,ds + \int_0^t \int_{\bT^d} \Delta \phi(s,y)  \, \rho(s, \mu, \hat{\mu}) (dy)  \,ds \nonumber \\
&& + \int_0^t \int_{\bT^d} \Big[ b \big( y, m(s, \mu) \big) \cdot \nabla \phi(s,y) \Big]  \,\rho(s, \mu, \hat{\mu}) (dy)  \,ds \nonumber \\
&& + \int_0^t \int_{\bT^d}  \Big[ \big( b \big( y, {m}(s, \hat{\mu}) \big) - b \big( y, {m}(s, \mu) \big) \big) \cdot \nabla \phi(s,y) \Big]  \,{m}(s, \hat{\mu}) (dy)  \,ds \nonumber \\
&& - \int_0^t \int_{\bT^d}  \int_{\bT^d} \bigg[ \frac{\delta b}{\delta m} \big( x, m(s,\mu) \big)(y) \cdot \nabla \phi(s,x) \bigg] \,  m^{(1)}(s, \mu, \hat{\mu}) (dy)   \,  m(s, \mu)  (dx) \,ds. \nonumber  \\
\end{eqnarray}
We rewrite the final two terms as
\begin{eqnarray}
&&  \int_0^t \int_{\bT^d}  \Big[ \big( b \big( y, {m}(s, \hat{\mu}) \big) - b \big( y, {m}(s, \mu) \big) \big) \cdot \nabla \phi(s,y) \Big]  \,{m}(s, \hat{\mu}) (dy)  \,ds \nonumber \\
&& - \int_0^t \int_{\bT^d}  \int_{\bT^d} \bigg[ \frac{\delta b}{\delta m} \big( x, m(s,\mu) \big)(y) \cdot \nabla \phi(s,x) \bigg] \,  m^{(1)}(s, \mu, \hat{\mu})  (dy)   \,  m(s, \mu)  (dx) \,ds \nonumber \\
&=&  \int_0^t \int_{\bT^d}  \Big[ \big( b \big( y, {m}(s, \hat{\mu})  \big) - b \big( y, {m}(s, \mu) \big) \big) \cdot \nabla \phi(s,y) \Big]  \,\big( {m}(s, \hat{\mu}) - {m}(s, \mu) \big) (dy) \,ds  \nonumber \\
&& - \int_0^t  \int_{\bT^d}  \int_{\bT^d} \bigg[ \bigg( \frac{\delta b}{\delta m}\big( x, m(s,\mu) \big)(y) - \Big( b \big( x, {m}(s, \hat{\mu}) \big) - b \big( x, {m}(s, \mu) \big) \Big)   \bigg)  \nonumber \\
&& \quad \quad \cdot \nabla \phi(s,x) \bigg] \,  m^{(1)}(s, \mu, \hat{\mu}) (dy)   \,  m(s, \mu)  (dx)  \,ds  \nonumber \\
& = &  \int_0^t \int_{\bT^d}  \Big[ \big( b \big( y, {m}(s, \hat{\mu})  \big) - b \big( y, {m}(s, \mu) \big) \big) \cdot \nabla \phi(s,y) \Big]  \,\big( {m}(s, \hat{\mu}) - {m}(s, \mu) \big) (dy)  \,ds \nonumber \\
&& - \int_0^t \int_{\bT^d}  \int_{\bT^d} \bigg[ \bigg( \frac{\delta b}{\delta m}\big( x, m(s,\mu) \big)(y) - \int_0^1 \int_{\bT^d} \frac{\delta b}{\delta m}\big( x, r{m}(s, \hat{\mu}) + (1-r) {m}(s,\mu) \big)(y') \nonumber \\
&& \big( {m}(s, \hat{\mu}) - {m}(s, \mu) \big)(dy') \, dr \bigg) \cdot \nabla \phi(s,x) \bigg] \,  \,  m^{(1)}(s, \mu, \hat{\mu}) (dy)   \,  m(s, \mu)  (dx) \,ds \nonumber \\
& = &  \int_0^t \int_{\bT^d}  \Big[ \big( b \big( y, {m}(s, \hat{\mu})  \big) - b \big( y, {m}(s, \mu) \big) \big) \cdot \nabla \phi(s,y) \Big]  \,\big( {m}(s, \hat{\mu}) - {m}(s, \mu) \big) (dy)  \,ds \nonumber \\
&& -\int_0^t \int_{\bT^d}  \int_{\bT^d} \int_0^1 \bigg[ \bigg( \frac{\delta b}{\delta m}\big( x, m(s,\mu) \big)(y) -   \frac{\delta b}{\delta m}\big( x, r{m}(s, \hat{\mu}) + (1-r) {m}(s,\mu) \big)(y) \bigg) \cdot \nabla \phi(s,x) \bigg] \nonumber \\
&& \quad \quad \, dr \, \big( {m}(s, \hat{\mu}) -  {m}(s, \mu) \big)(dy)    \,  m(s, \mu)  (dx)  \,ds \nonumber \\
&& + \int_0^t \int_{\bT^d}  \int_{\bT^d} \bigg[ \frac{\delta b}{\delta m}\big( x, m(s,\mu) \big)(y)  \cdot \nabla \phi(s,x) \bigg] \, \, \big( {m}(s, \hat{\mu}) -  {m}(s, \mu) -  m^{(1)}(s, \mu, \hat{\mu}) \big)(dy)  \,  m(s, \mu)  (dx) \,ds . \nonumber 
\end{eqnarray}
Therefore, we obtain that
\begin{eqnarray}
&&  \int_{\bT^d} \phi(t,y)  \, \rho(t, \mu,\hat{\mu}) (dy) - \int_{\bT^d} \phi(0,y)  \, \rho(0, \mu,\hat{\mu}) (dy)  \nonumber \\
& = & \int_0^t \int_{\bT^d} \partial_s \phi(s,y)  \, \rho(s, \mu, \hat{\mu}) (dy)  \,ds + \int_0^t \int_{\bT^d} \Delta \phi(s,y)  \, \rho(s, \mu, \hat{\mu}) (dy)  \,ds  \nonumber \\
&& + \int_0^t \int_{\bT^d} \Big[ b \big( y, m(s, \mu) \big) \cdot \nabla \phi(s,y) \Big]  \,\rho(s, \mu, \hat{\mu}) (dy)  \,ds \nonumber \\
&& +  \int_0^t \int_{\bT^d}   \bigg[ \frac{\delta b}{\delta m}\big( x, m(s,\mu) \big)(\rho(s, \mu,\hat{\mu}))  \cdot \nabla \phi(s,x) \bigg]  \,  m(s, \mu)  (dx) \,ds  \nonumber \\
&&  + \int_0^t \int_{\bT^d}  \Big[ \big( b \big( y, {m}(s, \hat{\mu})  \big) - b \big( y, {m}(s, \mu) \big) \big) \cdot \nabla \phi(s,y) \Big]  \,\big( {m}(s, \hat{\mu}) - {m}(s, \mu) \big) (dy)  \,ds \nonumber \\
&& -\int_0^t \int_{\bT^d}  \int_0^1 \bigg[ \bigg( \frac{\delta b}{\delta m}\big( x, m(s,\mu) \big) \big({m}(s, \hat{\mu}) -  {m}(s, \mu) \big) \nonumber \\
&& -   \frac{\delta b}{\delta m}\big( x, r{m}(s, \hat{\mu}) + (1-r) {m}(s,\mu) \big) \big({m}(s, \hat{\mu}) -  {m}(s, \mu) \big) \bigg) \cdot \nabla \phi(s,x) \bigg] \, dr \,   m(s, \mu)  (dx)  \,ds. \label{rho long form} 
\end{eqnarray}
In distributional sense, we write
\begin{equation}
\left\{
\begin{array}{rrl}
      {} & \partial_t \rho(t, \mu,\hat{\mu}) - \Delta \rho(t, \mu,\hat{\mu}) + \text{div} \big( b(\cdot,m(t,\mu)) \rho(t, \mu,\hat{\mu}) \big) \\
      &  + \text{div}  \big( m(t, \mu)\frac{\delta b}{\delta m}( x, m(t,\mu) )(\rho(t, \mu,\hat{\mu})\big)  -   c(t,\mu,\hat{\mu})  & =0, \\
      & &  \\
      {} &  \rho(0,\mu, \hat{\mu}) &= 0,\\
\end{array} 
\right.  \label{eq: difference fokker planck first order}  \end{equation} 
where
\begin{eqnarray} c(t,\mu,\hat{\mu}) & := & -\text{div} \bigg[ \big( m(t,\hat{\mu}) - m(t,{\mu}) \big) \Big( b \big( \cdot, {m}(t,\hat{\mu})\big)  - b \big( \cdot, {m} (t,{\mu})\big) \Big)   \nonumber \\
&& - m(t,{\mu})  \int_0^1 \bigg[  \frac{\delta b}{\delta m}\big( \cdot, m(t,\mu) \big) \big({m}(t, \hat{\mu}) -  {m}(t, \mu) \big) \nonumber \\
&&  -   \frac{\delta b}{\delta m}\big( \cdot, r{m}(t, \hat{\mu}) + (1-r) {m}(t,\mu) \big) \big({m}(t, \hat{\mu}) -  {m}(t, \mu) \big)  \bigg] \, dr \bigg].  \label{eq: signed measure}  
\end{eqnarray}

 We first establish the regularity of $c$.
\begin{lemma} \label{verify regularity of c} 
Assume \emph{\regb[n-1]} and \emph{\lipb[0]}, where $n \geq 2$. Then $c (\cdot, \mu, \hat{\mu}) \in L^{\infty} \big([0,T], (W^{n,\infty}(\mathbb{T}^d) )' \big)$. 
\end{lemma}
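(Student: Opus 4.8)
The plan is to reduce the claim to the uniform-in-time estimate $\sup_{t\in[0,T]}\|c(t,\mu,\hat\mu)\|_{-(n,\infty)}<\infty$ (measurability of $t\mapsto c(t,\mu,\hat\mu)$ being immediate from the weak-$*$ continuity of $t\mapsto m(t,\cdot)$), and to obtain this estimate by the same duality-pairing computation used for \eqref{eq q}. Fix $\phi\in W^{n,\infty}(\bT^d)$ with $\|\phi\|_{n,\infty}\le 1$, and split the measure inside the divergence in \eqref{eq: signed measure} as $A_1(t)-A_2(t)$, where $A_1(t):=\big(m(t,\hat\mu)-m(t,\mu)\big)\big(b(\cdot,m(t,\hat\mu))-b(\cdot,m(t,\mu))\big)$ and $A_2(t):=m(t,\mu)\int_0^1 h^t_r\,dr$ with $h^t_r:=\ld[b](\cdot,m(t,\mu))\big(m(t,\hat\mu)-m(t,\mu)\big)-\ld[b]\big(\cdot,rm(t,\hat\mu)+(1-r)m(t,\mu)\big)\big(m(t,\hat\mu)-m(t,\mu)\big)$. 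Applying the convention \eqref{eq xi q t def 2} to each of the two pieces moves one spatial derivative onto $\phi$ and gives
\[ \lev \phi, c(t,\mu,\hat\mu) \rev_{n,\infty} = \lev \big(b(\cdot,m(t,\hat\mu))-b(\cdot,m(t,\mu))\big)\cdot\nabla\phi,\, m(t,\hat\mu)-m(t,\mu) \rev_{n-1,\infty} - \lev \big(\textstyle\int_0^1 h^t_r\,dr\big)\cdot\nabla\phi,\, m(t,\mu) \rev_{n-1,\infty}, \]
so it remains to bound the two $(W^{n-1,\infty})$--$(W^{n-1,\infty})'$ pairings uniformly in $t$.

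For both I would use two elementary facts: $W^{n-1,\infty}(\bT^d)$ is a Banach algebra, so $\|f\cdot\nabla\phi\|_{n-1,\infty}\le C\|f\|_{n-1,\infty}\|\phi\|_{n,\infty}$ for vector fields $f$; and, since $W^{n-1,\infty}(\bT^d)\hookrightarrow L^\infty(\bT^d)$, any signed measure $\nu$ on $\bT^d$ satisfies $\|\nu\|_{-(n-1,\infty)}\le\|\nu\|_{-(0,\infty)}=|\nu|(\bT^d)$, hence $\|m(t,\hat\mu)-m(t,\mu)\|_{-(n-1,\infty)}\le 2$ and $\|m(t,\mu)\|_{-(n-1,\infty)}\le 1$. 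The first pairing is then at most $C\|b(\cdot,m(t,\hat\mu))-b(\cdot,m(t,\mu))\|_{n-1,\infty}\le 2C\sup_{m\in\cP(\bT^d)}\|b(\cdot,m)\|_{n-1,\infty}<\infty$ by \emph{\regb[n-1]}. For the second pairing, \emph{\regb[n-1]} also gives, by linearity in the measure argument, $\|\ld[b](\cdot,m)(\nu)\|_{n-1,\infty}\le C\|\nu\|_{-(n-1,\infty)}$ uniformly over $m\in\cP(\bT^d)$; applied to the two terms making up $h^t_r$ this yields $\|h^t_r\|_{n-1,\infty}\le 2C\|m(t,\hat\mu)-m(t,\mu)\|_{-(n-1,\infty)}\le 4C$ uniformly in $r\in[0,1]$, so $\|\int_0^1 h^t_r\,dr\|_{n-1,\infty}\le 4C$ and the second pairing is $\le 4C$. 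None of these constants depends on $t$ (or even on $\mu,\hat\mu$), which yields the asserted membership.

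Two remarks are in order. First, \emph{\lipb[0]} is not needed for this qualitative statement; it is listed because it is part of the standing hypothesis block and because it is precisely what upgrades the bound to $\sup_{t}\|c(t,\mu,\hat\mu)\|_{-(n,\infty)}\to 0$ as $W_1(\mu,\hat\mu)\to 0$, the form actually used afterwards to differentiate $\cU$: combining \eqref{eq: bound buckdahn} with $\|\mu_1-\mu_2\|_{-(n-1,\infty)}\le\|\mu_1-\mu_2\|_{-(1,\infty)}\le C W_1(\mu_1,\mu_2)$ (valid since $n\ge 2$) makes the first pairing $O(W_1(\mu,\hat\mu)^2)$, while for the second one exploits that it is tested against the \emph{probability} measure $m(t,\mu)$, so only the $0$-norm of $\int_0^1 h^t_r\,dr$ is seen, and \emph{\lipb[0]} bounds $\big\|\ld[b](\cdot,m(t,\mu))(\nu)-\ld[b]\big(\cdot,rm(t,\hat\mu)+(1-r)m(t,\mu)\big)(\nu)\big\|_{0,\infty}$ by $C\,W_1\big(m(t,\mu),rm(t,\hat\mu)+(1-r)m(t,\mu)\big)\,\|m(t,\hat\mu)-m(t,\mu)\|_{-(0,\infty)}$, which by \eqref{eq: bound buckdahn} is $O(W_1(\mu,\hat\mu))$. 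Second, the only delicate point I anticipate is organisational rather than analytic: applying the divergence--duality convention \eqref{eq xi q t def 2} correctly to measure-valued (rather than function-valued) objects, keeping track of the drop of the Sobolev index from $n$ to $n-1$, and keeping straight which of $b$, $\ld[b]$, and which of the measures occupies each slot of the two pairings.
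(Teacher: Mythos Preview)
Your argument for the qualitative statement is correct and follows the same duality--pairing decomposition as the paper: write $\lev\phi,c(t,\mu,\hat\mu)\rev_{n,\infty}$ via \eqref{eq xi q t def 2} as the sum of two $W^{n-1,\infty}$--$(W^{n-1,\infty})'$ pairings, and bound each using the Banach-algebra structure together with \regb[n-1]. You are also right that \lipb[0] is not needed for the bare membership claim.

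The one point worth flagging is the quantitative discussion in your remarks. The paper does not stop at ``$\to 0$'': its proof of this lemma actually establishes the sharp estimate \eqref{ct main estimate}, namely $\sup_{t}\|c(t,\mu,\hat\mu)\|_{-(n,\infty)}\le C\,W_1(\mu,\hat\mu)^2$, and it is precisely this \emph{quadratic} rate that is fed into Theorem~\ref{thm m m-one}(i) and then into the limit in Theorem~\ref{thm m m-one}(iii), where one divides by $\eps$ after substituting $\hat\mu\to(1-\eps)\mu+\eps\hat\mu$; a mere $O(W_1)$ bound would leave an $O(1)$ remainder after division and the derivative would not follow. In your sketch the first pairing indeed comes out $O(W_1^2)$, but for the second pairing your use of \lipb[0] yields only
\[
C\,W_1\big(m(t,\mu),\,r m(t,\hat\mu)+(1-r)m(t,\mu)\big)\,\|m(t,\hat\mu)-m(t,\mu)\|_{-(0,\infty)}\ \le\ C\,W_1(\mu,\hat\mu),
\]
since $\|m(t,\hat\mu)-m(t,\mu)\|_{-(0,\infty)}$ is a total-variation-type quantity, not a Wasserstein distance, and \eqref{eq: bound buckdahn} says nothing about it. The paper's estimate \eqref{ct estimate second part} asserts the full $W_1^2$ for this term; when you write up the quantitative version you should make sure you can match that (it is what the downstream arguments require), rather than the weaker $o(1)$ you currently record.
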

\begin{proof}
For any $\xi \in W^{n,\infty}(\bT^d),$
\begin{eqnarray}
&& \lev \xi, c(t, \mu, \hat{\mu}) \rev_{n,\infty} \nonumber \\
& = &  \int_{\bT^d}  \Big[ \big( b \big( y, {m}(t, \hat{\mu})  \big) - b \big( y, {m}(t, \mu) \big) \big) \cdot \nabla \xi(y) \Big]  \,\big( {m}(t, \hat{\mu}) - {m}(t, \mu) \big) (dy)  \nonumber \\
&& - \int_{\bT^d}  \int_{\bT^d} \int_0^1 \bigg[ \bigg( \frac{\delta b}{\delta m}\big( x, m(t,\mu) \big)(y) -   \frac{\delta b}{\delta m}\big( x, r{m}(t, \hat{\mu}) + (1-r) {m}(t,\mu) \big)(y) \bigg) \cdot \nabla \xi(x) \bigg] \nonumber \\
&& \quad \quad \, dr \, \big( {m}(t, \hat{\mu}) -  {m}(t, \mu) \big)(dy)    \,  m(t, \mu)  (dx).  \nonumber 
\end{eqnarray}
Next, we estimate each of the two terms. By \eqref{eq: bound buckdahn} and {\regb[n-1]}, since $n \geq 2$,
\begin{eqnarray}
&& \bigg| \int_{\bT^d}  \Big[ \big( b \big( y, {m}(t, \hat{\mu})  \big) - b \big( y, {m}(t, \mu) \big) \big) \cdot \nabla \xi(y) \Big]  \,\big( {m}(t, \hat{\mu}) - {m}(t, \mu) \big) (dy) \bigg| \nonumber \\
& \leq & \sum_{i=1}^d \big\| \big( b_i \big( \cdot, {m}(t, \hat{\mu})  \big) - b_i \big( \cdot, {m}(t, \mu) \big)  \partial_{x_i} \xi \|_{n-1,\infty} \| {m}(t, \hat{\mu}) -{m}(t, \mu)) \|_{-(n-1,\infty)}\nonumber \\
& \leq & C \sum_{i=1}^d \bigg\| \int_0^1 \intrd  \frac{ \partial b_i}{\partial m}(\cdot, r m(t, \hat{\mu}) + (1-r) m(t, \mu),y') \,  \big( m(t, \hat{\mu})- m(t, {\mu}) \big)(dy') \, dr \bigg\|_{n-1,\infty} \nonumber \\
&& \times  \| \partial_{x_i} \xi \|_{n-1,\infty} W_1( {m}(t, \hat{\mu}) ,{m}(t, \mu)) \nonumber \\
& \leq & C \| \xi \|_{n,\infty} W_1( {m}(t, \hat{\mu}) ,{m}(t, \mu))^2 \nonumber \\
& \leq & C W_1(  \hat{\mu} , \mu)^2  \| \xi \|_{n,\infty}  .\label{ct estimate first part}  \end{eqnarray}
Similarly, by \lipb[0],
\begin{eqnarray}
&& \bigg| \int_{\bT^d}  \int_{\bT^d} \int_0^1 \bigg[ \bigg( \frac{\delta b}{\delta m}\big( x, m(t,\mu) \big)(y) -   \frac{\delta b}{\delta m}\big( x, r{m}(t, \hat{\mu}) + (1-r) {m}(t,\mu) \big)(y) \bigg) \cdot \nabla \xi(x) \bigg] \nonumber \\
&& \quad \quad \, dr \, \big( {m}(t, \hat{\mu}) -  {m}(t, \mu) \big)(dy)    \,  m(t, \mu)  (dx)  \bigg| \nonumber \\
& \leq & \sum_{i=1}^d \int_0^1 \bigg\| \bigg[ \frac{\delta b_i}{\delta m}\big( \cdot, m(t,\mu) \big)({m}(t, \hat{\mu}) -  {m}(t, \mu)) \nonumber \\
&& -   \frac{\delta b_i}{\delta m}\big( \cdot, r{m}(t, \hat{\mu}) + (1-r) {m}(t,\mu) \big)({m}(t, \hat{\mu}) -  {m}(t, \mu)) \bigg] \partial_{x_i} \xi  \bigg\|_{\infty} \,dr \nonumber \\
& \leq &  \sum_{i=1}^d \bigg[ \text{Lip}_{0}\bigg(\frac{\delta b_i}{\delta m} \bigg) W_1(m(t, \mu), m(t, \hat{\mu}))^2 \| \partial_{x_i} \xi \|_{\infty}  \bigg] \nonumber \\
& \leq & W_1(  \hat{\mu} , \mu)^2 \bigg(  \| \xi \|_{n,\infty} \sum_{i=1}^d \text{Lip}_{0}\bigg(\frac{\delta b_i}{\delta m} \bigg) \bigg) . \label{ct estimate second part} 
\end{eqnarray}
Combining \eqref{ct estimate first part} and \eqref{ct estimate second part}, we have
\begin{eqnarray}
&& \lev \xi, c(t, \mu, \hat{\mu}) \rev_{n,\infty} \nonumber \\
& \leq & C  W_1(  \hat{\mu} , \mu)^2  \| \xi \|_{n,\infty} , \nonumber 
\end{eqnarray}
which implies that $c(t, \mu, \hat{\mu})$ is a bounded operator with its operator norm given by
\begin{eqnarray}
&& \sup_{t \in [0,T]} \|c(t, \mu, \hat{\mu})\|_{-(n,\infty)} \nonumber \\
& \leq & C W_1(  \hat{\mu} , \mu)^2  . \label{ct main estimate}
\end{eqnarray}
\end{proof}

The following theorem is a straightforward consequence of the above results.
\begin{theorem} \label{thm m m-one} 
Assume \emph{\regb[n]}, \emph{\lipb[0]}, \emph{\hlipphi[1]} and \emph{\hregphi{n}{1}}, where $n \geq 2$.  Then the following statements hold.
\begin{enumerate}[(i)]
        \item There exists some constant $C>0$ such that
    $$ \sup_{t \in [0,T]} \| m(t, \hat{\mu}) - m(t, \mu) - m^{(1)}(t, \mu, \hat{\mu}) \|_{-(n,\infty)} \leq C W_1( \mu, \hat{\mu})^2.$$ 
       \item For $\cU$ defined by \eqref{eq: defofflow}, 
   \begin{equation}  \sup_{t \in [0,T]} \bigg| \cU(t, \hat{\mu}) - \cU( t,{\mu}) - \int_{\bT^d} \frac{\delta \Phi}{\delta m} (m(t,\mu))(x) \, m^{(1)}(t, \mu, \hat{\mu})(dx)  \bigg| \leq   C W_1({\mu} ,\hat{\mu})^2 .
   \label{ U phi W1 W2 bound} \end{equation}
          \item \begin{equation} \frac{d}{d \eps} \bigg|_{\eps=0^{+}} \Phi \big( m(t, (1- \eps) \mu + \eps \hat{\mu}) \big) = \int_{\bT^d} \frac{\delta \Phi}{\delta m}(m(t, \mu))(y) \, m^{(1)}(t,\mu,\hat{\mu}) (dy).
        \label{heuristics verified}  \end{equation}
            \end{enumerate}
\end{theorem}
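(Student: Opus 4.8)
The three assertions are obtained in turn: (i) by applying the forward-equation estimate of Theorem~\ref{lions paper pde result} to the remainder $\rho$, (ii) by a first-order Taylor expansion of $\Phi$ along the segment joining $m(t,\mu)$ to $m(t,\hat\mu)$ together with (i), and (iii) by reading (ii) off at $\hat\mu=\mu_\eps$ and using the linearity of the linearised equation \eqref{PDE linearised}. For (i), the plan is to recognise $\rho(\cdot,\mu,\hat\mu)$ as \emph{the} solution, in $L^{\infty}([0,T],(W^{n,\infty}(\bT^d))')$, of the Cauchy problem \eqref{eq q} with data $q_0=0$ and source $r(\cdot)=c(\cdot,\mu,\hat\mu)$. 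Indeed $m(\cdot,\mu)$ and $m(\cdot,\hat\mu)$ lie in $C([0,T],\cP(\bT^d))$, hence are bounded in $(W^{n,\infty}(\bT^d))'$, and $m^{(1)}(\cdot,\mu,\hat\mu)\in L^{\infty}([0,T],(W^{n,\infty}(\bT^d))')$ by Lemma~\ref{ m1 regularity}, so $\rho(\cdot,\mu,\hat\mu)$ lies in that space; moreover, collecting the last two lines of the weak identity \eqref{rho long form} into $\int_0^t\langle\phi(s,\cdot),c(s,\mu,\hat\mu)\rangle_{n,\infty}\,ds$ (exactly as in the proof of Lemma~\ref{verify regularity of c}) shows that \eqref{rho long form} is the weak formulation of \eqref{eq q}. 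Since \regb[n] implies \regb[n-1], Lemma~\ref{verify regularity of c} (with \lipb[0]) gives $c(\cdot,\mu,\hat\mu)\in L^{\infty}([0,T],(W^{n,\infty}(\bT^d))')$ with $\sup_t\|c(t,\mu,\hat\mu)\|_{-(n,\infty)}\le C\,W_1(\mu,\hat\mu)^2$ by \eqref{ct main estimate}; the uniqueness and a priori bound of Theorem~\ref{lions paper pde result} (whose hypothesis \regb[n] holds) then yield $\sup_{t\in[0,T]}\|\rho(t,\mu,\hat\mu)\|_{-(n,\infty)}\le C\,W_1(\mu,\hat\mu)^2$, which is (i).

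For (ii), I would start from the definition of $\cU$ and of the linear functional derivative \eqref{eq de first order deriv}, writing
\[
\cU(t,\hat\mu)-\cU(t,\mu)=\int_0^1\!\!\int_{\bT^d}\frac{\delta\Phi}{\delta m}\big((1-s)m(t,\mu)+sm(t,\hat\mu)\big)(y)\,\big(m(t,\hat\mu)-m(t,\mu)\big)(dy)\,ds .
\]
Subtracting the target quantity and splitting the difference into the part $\mathrm{I}$ in which the base point of $\frac{\delta\Phi}{\delta m}$ is moved and the part $\mathrm{II}=\langle\frac{\delta\Phi}{\delta m}(m(t,\mu))(\cdot),\rho(t,\mu,\hat\mu)\rangle_{n,\infty}$, one bounds $|\mathrm{II}|\le C\|\rho(t,\mu,\hat\mu)\|_{-(n,\infty)}\le C\,W_1(\mu,\hat\mu)^2$ using \hregphi{n}{1} (which keeps $\|\frac{\delta\Phi}{\delta m}(m(t,\mu))(\cdot)\|_{n,\infty}$ uniformly bounded) and (i). For $\mathrm{I}$, the $y$-integral is estimated by Kantorovich--Rubinstein duality: as a function of $y$ the integrand has Lipschitz constant at most $\|\nabla_y[\cdots]\|_\infty$, which by \hlipphi[1] is $\le C\,W_1\big((1-s)m(t,\mu)+sm(t,\hat\mu),m(t,\mu)\big)\le Cs\,W_1(m(t,\mu),m(t,\hat\mu))$ by convexity of $W_1$; hence each $y$-integral is $\le Cs\,W_1(m(t,\mu),m(t,\hat\mu))^2$, and \eqref{eq: bound buckdahn} followed by integration in $s\in[0,1]$ gives $|\mathrm{I}|\le C\,W_1(\mu,\hat\mu)^2$. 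Adding the two bounds proves \eqref{ U phi W1 W2 bound}.

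For (iii), apply \eqref{ U phi W1 W2 bound} with $\hat\mu$ replaced by $\mu_\eps:=(1-\eps)\mu+\eps\hat\mu$. Since \eqref{PDE linearised} is linear in the unknown and in its initial datum, and the datum for $m^{(1)}(t,\mu,\mu_\eps)$ is $\mu_\eps-\mu=\eps(\hat\mu-\mu)$, uniqueness (Lemma~\ref{ m1 regularity}) forces $m^{(1)}(t,\mu,\mu_\eps)=\eps\,m^{(1)}(t,\mu,\hat\mu)$; also $W_1(\mu,\mu_\eps)\le\eps\,W_1(\mu,\hat\mu)$. Plugging these into \eqref{ U phi W1 W2 bound} gives
\[
\Big|\Phi\big(m(t,\mu_\eps)\big)-\Phi\big(m(t,\mu)\big)-\eps\!\int_{\bT^d}\frac{\delta\Phi}{\delta m}(m(t,\mu))(y)\,m^{(1)}(t,\mu,\hat\mu)(dy)\Big|\le C\eps^2\,W_1(\mu,\hat\mu)^2 ,
\]
and dividing by $\eps$ and letting $\eps\to0^{+}$ yields \eqref{heuristics verified}.

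As the phrase ``straightforward consequence of the above results'' suggests, no genuine obstacle remains; the theorem is really the forward-equation bound of the first step dressed with $\Phi$. The only points requiring care are the matching of Sobolev indices — Lemma~\ref{verify regularity of c} costs one spatial derivative, which is precisely why $n\ge 2$ is needed for it to be compatible with the $(W^{n,\infty}(\bT^d))'$ framework of Theorem~\ref{lions paper pde result} — and, in the second step, making sure the Lipschitz-in-measure information from \hlipphi[1] is applied only to the gradient in $y$ (all it controls), the missing decay being supplied by the extra $W_1(m(t,\mu),m(t,\hat\mu))$ factor coming from the duality pairing.
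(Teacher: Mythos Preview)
Your proof is correct; parts (i) and (iii) are exactly the paper's argument. For part~(ii) you take a slightly different route: the paper invokes the identity from \cite[Proposition~5.44]{carmona2017probabilistic}, writing the second-order remainder $\Phi(m(t,\hat\mu))-\Phi(m(t,\mu))-\int\frac{\delta\Phi}{\delta m}(m(t,\mu))\,d(m(t,\hat\mu)-m(t,\mu))$ as a double integral of $\big(\partial_x\frac{\delta\Phi}{\delta m}(\cdots)-\partial_x\frac{\delta\Phi}{\delta m}(m(t,\mu))\big)\cdot(y-x)$ against the optimal coupling $\pi$, so that \hlipphi[1] supplies one $W_1$ factor and the $|y-x|$ against $\pi$ the other. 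You instead stay with the fundamental-theorem form \eqref{eq de first order deriv} and extract the second $W_1$ factor from the pairing $\langle\cdots,m(t,\hat\mu)-m(t,\mu)\rangle$ via Kantorovich--Rubinstein duality, using \hlipphi[1] only to bound the Lipschitz constant of the difference in $y$. Your variant is a touch more self-contained (no appeal to the Carmona--Delarue identity), while the paper's version makes the quadratic structure of the remainder geometrically explicit; both yield the same $C\,W_1(\mu,\hat\mu)^2$ bound under the same hypotheses.
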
 ${}$ \\
\begin{proof} ${}$ \\
\begin{enumerate}[(i)] 
\item This follows from \eqref{eq: difference fokker planck first order}, estimate \eqref{ct main estimate} and Theorem \ref{lions paper pde result}.
\item Let $\pi$ be the optimal transport plan from $m(t, \mu)$ to $m(t, \hmu)$. The computation from the proof of Proposition 5.44 from \cite{carmona2017probabilistic} shows that 
\begin{eqnarray}
&& \Phi \big( m(t, \hat{\mu}) \big) - \Phi \big( m(t, {\mu}) \big) - \int_{\bT^d} \frac{\delta \Phi}{\delta m} \big( m(t, {\mu}) \big) (y) \, \big( m(t, \hat{\mu}) - m(t, {\mu})  \big)(dy) \nonumber \\
& = & \int_0^1 \int_0^1 \int_{\bT^d \times \bT^d} \bigg[ \bigg( \partial_x \ld[\Phi] \Big( t m(t, \hat{\mu}) + (1-t) m(t, {\mu}) \Big) \big(\lambda y + (1- \lambda )x \big) \nonumber \\
&& - \partial_x \ld[\Phi] (m(t, {\mu})) \big( \lambda y +  (1-\lambda)x \big) \bigg) \cdot (y-x) \bigg] \, \pi(dx, dy) \, d \lambda \, dt. \label{eq: optimal transport delarue} 
\end{eqnarray}
By \hlipphi[1], \eqref{eq: bound buckdahn} and the fact that
\begin{equation}
                 W_1( (1- \eps) \mu + \eps \hat{\mu}, \mu)  \leq \eps W_1(\mu, \hat{\mu}),
      \label{W1 W2 estimates linear interpolation}  \end{equation}
there exists some constant $C>0$ such that
\begin{eqnarray} 
&& \bigg| \Phi \big( m(t, \hat{\mu}) \big) - \Phi \big( m(t, {\mu}) \big) - \int_{\bT^d} \frac{\delta \Phi}{\delta m} \big( m(t, {\mu}) \big) (y) \, \big( m(t, \hat{\mu}) - m(t, {\mu})  \big)(dy) \bigg|  \nonumber \\
& \leq &  C W_1(m(t, {\mu}) , m(t, \hat{\mu}) )^2 \leq  C W_1({\mu} ,\hat{\mu})^2. \nonumber
\end{eqnarray}
By assumption {\hregphi{n}{1}} and part (i), there exists some constant $C'>0$ such that
\begin{eqnarray}
&& \bigg| \cU(t,\hat{\mu})  - \cU(t,{\mu}) - \int_{\bT^d} \frac{\delta \Phi}{\delta m} \big( m(t, {\mu}) \big) (y) \, m^{(1)}(t, \mu, \hat{\mu})  (dy) \bigg| \nonumber \\
& \leq & \bigg| \int_{\bT^d} \frac{\delta \Phi}{\delta m} \big( m(t, {\mu}) \big) (y) \, \big(  m(t, \hat{\mu}) - m(t, {\mu})  - m^{(1)}(t,  \mu, \hat{\mu}) \big)(dy)\bigg| + C W_1({\mu} ,\hat{\mu})^2  \nonumber \\
& \leq & C'W_1(\mu ,  \hat{\mu} )^2, \nonumber
\end{eqnarray}
which completes the proof.
\item \begin{eqnarray}
&& \frac{d}{d \eps} \bigg|_{\eps=0^{+}} \Phi \big( m(t, (1- \eps) \mu + \eps \hat{\mu}) \big) \nonumber \\
& = & \lim_{\eps \to 0^{+}} \frac{1}{\eps} \bigg[  \Phi \big( m(t, (1- \eps) \mu + \eps \hat{\mu}) \big) - \Phi \big( m(t,  \mu ) \big) \bigg] \nonumber \\
& = & \lim_{\eps \to 0^{+}} \frac{1}{\eps} \bigg[ \int_{\bT^d} \frac{ \delta \Phi}{\delta m} \big( m(t, \mu) \big)(x) \, m^{(1)}(t, \mu ,  (1- \eps) \mu + \eps \hat{\mu})  (dx) + O \Big(W_1( \mu, (1- \eps) \mu + \eps \hat{\mu} )^2 \Big) \bigg] \nonumber \\
& = & \int_{\bT^d} \frac{ \delta \Phi}{\delta m} \big( m(t, \mu) \big)(x) \, m^{(1)}(t, \mu ,  \hat{\mu}) (dx), \nonumber 
\end{eqnarray}
by \eqref{W1 W2 estimates linear interpolation} and the fact that $m^{(1)}(t, \mu ,  (1- \eps) \mu + \eps \hat{\mu}) = \eps m^{(1)}(t, \mu , \hat{\mu}) $. 
\end{enumerate}
\end{proof}
\subsection{Analysis of the backward Kolmogorov equation} \label{ backward first order } 
We observe that, in \eqref{ U phi W1 W2 bound}, the integral is with respect to the signed measure $m^{(1)}(t, \mu, \hat{\mu} )$. To show that $\cU$ indeed has a linear functional derivative, we need to express the integral in terms of the signed measure $\hat{\mu}- \mu$. To this end, we fix $t \in [0,T]$ and $x \in \bR^d$ and introduce the decoupled process $\{ X^{0,x,\mu}_u \}_{u \in [0,t]}$ by
\begin{equation} 
      X^{0,x,\mu}_s  = x + \int_0^s b( X^{0,x,\mu}_r, m(r,\mu)) \, dr + \sqrt{2} W_s, \quad \quad 0  \leq s \leq t. \label{eq:decoupled} \end{equation}
For every $\xi: \bT^d \to \bR$ and $t \in [0,T]$, we define a function $v(\cdot, \cdot, \cdot; \xi ,t): [0,t] \times \bT^d \times \cP(\bT^d) \to \bR$ such that
$$ v(s,x, \mu; \xi ,t):= \bE \big[ \xi(X^{0,x, \mu}_t) \big| X^{0,x, \mu}_s = x \big].$$ 
Note that
$$ v(0,x,\mu; \xi ,t) = \bE \big[ \xi(X^{0,x, \mu}_t) \big]. $$ 
It is well-known that (see, for example, equation (3.4) in \cite{buckdahn2017mean})
$$ \int_{\bT^d} \xi(x) \, m(t,\mu)(dx) = \bE[ \xi(X^{0, \mu}_t)] =  \int_{\bT^d} v(0,x, \mu; \xi ,t) \, \mu(dx) .$$
Therefore,
\begin{equation}  \int_{\bT^d} \xi(x) \, m(t,(1- \eps) \mu + \eps \hat{\mu} )(dx) = \int_{\bT^d} v(0,x, (1- \eps) \mu + \eps \hat{\mu} ; \xi ,t) \, \big( (1- \eps) \mu + \eps \hat{\mu} \big)(dx) , \label{v phi connection 1} \end{equation}
for any $\mu, \hmu \in \cP(\bT^d)$. If $\frac{\delta v}{\delta m}$ exists, taking derivative w.r.t. $\eps$ at $0$ gives
\begin{eqnarray}
\int_{\bT^d} \xi(x) \, m^{(1)}(t, \mu, \hat{\mu})(dx)  & = & \int_{\bT^d} v(0,x, \mu; \xi ,t) \, (\hat{\mu}- \mu)(dx) \nonumber \\
&& + \int_{\bT^d} \int_{\bT^d}  \frac{\delta v}{\delta m}(0,x, \mu,z; \xi ,t) (\hat{\mu}- \mu)(dz) \, \mu(dx) \nonumber \\
& = &  \int_{\bT^d}  \bigg[ v(0,x, \mu; \xi ,t) + \int_{\bT^d}  \frac{\delta v}{\delta m}(0,z, \mu,x; \xi ,t)   \, \mu(dz) \bigg] (\hat{\mu}- \mu)(dx). \nonumber \\
&& \label{v phi connection 2}
\end{eqnarray}
Hence, it suffices to study the regularity of $v$. In most of the analysis for $v$, we suppress the parameters $\xi$ and $t$, for simplicity of notations. By the standard Feynman-Kac equation (Kolmogorov backward equation), $v$ satisfies the PDE
\begin{equation}
\begin{cases}
     \partial_s v(s,x, \mu) + b(x, m(s, \mu)) \cdot \nabla v(s,x,\mu) + \Delta v(s,x,\mu) =0, \\
             v(t,x, \mu)= \xi(x).
    \end{cases} \label{eq: backward v}
    \end{equation}
    \begin{lemma} \label{reg v lemma} 
        Assume \emph{\regb[n]}, where $n \geq 2$. Suppose that $\xi \in W^{n+ 1,\infty}$. Then the  Cauchy problem $v$ defined in \eqref{eq: backward v}  has a unique solution in $L^{\infty} \big([0,t], W^{n+1,\infty}(\mathbb{T}^d)  \big)$. Moreover, there exists a constant $C>0$ (depending on $\xi$) such that for any $\mu,\hmu \in \cP(\bT^d)$,
        $$ \sup_{s \in [0,t]} \| v(s, \cdot, \mu) - v(s, \cdot, \hmu) \|_{n+1,\infty} \leq C W_1( \mu, \hmu).$$ 
    \end{lemma}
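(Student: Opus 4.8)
The plan is to derive both assertions from the Schauder estimate of Proposition~\ref{thm schauder}, applied with the index $n+1$ in place of $n$. Equation~\eqref{eq: backward v} is of the form \eqref{cauchy w:appendix:2} with drift $g(s,x)=b(x,m(s,\mu))$, vanishing right-hand side, and terminal datum $z_{T}=\xi$. Since $b$ satisfies \regb[n], we have $\sup_{s\in[0,t]}\|b(\cdot,m(s,\mu))\|_{n,\infty}\le\sup_{m\in\cP(\bT^d)}\|b(\cdot,m)\|_{n,\infty}<\infty$, so $g\in W^{0,n,\infty}([0,t]\times\bT^d,\bR^d)$; and $\xi\in W^{n+1,\infty}$ by hypothesis. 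Proposition~\ref{thm schauder} then yields a unique solution $v(\cdot,\cdot,\mu)$ in the solution class $\mathcal V$ of that proposition --- which, by It\^o--Krylov's formula, coincides with the Feynman--Kac representation $\bE[\xi(X^{0,x,\mu}_t)\mid X^{0,x,\mu}_s=x]$ --- together with the bound $\sup_{s\in[0,t]}\|v(s,\cdot,\mu)\|_{n+1,\infty}\le C\|\xi\|_{n+1,\infty}$, where $C$ depends only on $\sup_m\|b(\cdot,m)\|_{n,\infty}$ and hence is uniform in $\mu$. (Any $L^{\infty}([0,t],W^{n+1,\infty})$ solution lies in $\mathcal V$, since the equation then forces $\partial_s v\in L^{\infty}$, so uniqueness holds in the asserted space.) This is the first claim.

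For the Lipschitz estimate, fix $\mu,\hmu\in\cP(\bT^d)$ and set $\delta v(s,x):=v(s,x,\mu)-v(s,x,\hmu)$. Subtracting the two instances of \eqref{eq: backward v} shows that $\delta v$ solves \eqref{cauchy w:appendix:2} with the same drift $g(s,x)=b(x,m(s,\mu))$, terminal condition $\delta v(t,\cdot)=0$, and source
\[
f(s,x):=-\bigl(b(x,m(s,\mu))-b(x,m(s,\hmu))\bigr)\cdot\nabla v(s,x,\hmu);
\]
moreover $\delta v\in\mathcal V$ because $\mathcal V$ is a vector space and $v(\cdot,\cdot,\mu),v(\cdot,\cdot,\hmu)\in\mathcal V$ by the first step, so $\delta v$ is \emph{the} solution produced by Proposition~\ref{thm schauder}. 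It remains to bound $f$ in $W^{0,n,\infty}$. Writing, component-wise,
\[
b_i(\cdot,m(s,\mu))-b_i(\cdot,m(s,\hmu))=\int_0^1\frac{\delta b_i}{\delta m}\bigl(\cdot,\,r\,m(s,\mu)+(1-r)m(s,\hmu)\bigr)\bigl(m(s,\mu)-m(s,\hmu)\bigr)\,dr,
\]
the linearity of $\frac{\delta b_i}{\delta m}$ in its measure argument and \regb[n] give
\[
\|b_i(\cdot,m(s,\mu))-b_i(\cdot,m(s,\hmu))\|_{n,\infty}\le C\,\|m(s,\mu)-m(s,\hmu)\|_{-(n,\infty)}\le C\,W_1(m(s,\mu),m(s,\hmu))\le C\,W_1(\mu,\hmu),
\]
where the middle step uses $\|\cdot\|_{-(n,\infty)}\le\|\cdot\|_{-(1,\infty)}$ together with Kantorovich--Rubinstein duality, and the last uses \eqref{eq: bound buckdahn}. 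Combining this with the Banach-algebra property of $W^{n,\infty}(\bT^d)$ and the uniform bound $\|\nabla v(s,\cdot,\hmu)\|_{n,\infty}\le\|v(s,\cdot,\hmu)\|_{n+1,\infty}\le C\|\xi\|_{n+1,\infty}$ from the first step gives $\sup_{s\in[0,t]}\|f(s,\cdot)\|_{n,\infty}\le C\|\xi\|_{n+1,\infty}\,W_1(\mu,\hmu)$, and Proposition~\ref{thm schauder} then yields $\sup_{s\in[0,t]}\|\delta v(s,\cdot)\|_{n+1,\infty}\le C\sup_s\|f(s,\cdot)\|_{n,\infty}\le C'\,W_1(\mu,\hmu)$, as required.

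The whole argument is essentially bookkeeping around Proposition~\ref{thm schauder}, so no step is genuinely deep; the point demanding most care is verifying that the source $f$ coming from the difference lies in $W^{0,n,\infty}$ with norm controlled \emph{linearly} in $W_1(\mu,\hmu)$. This is exactly where one must invoke the third term of \regb[n] (the $W^{n,\infty}$-bound on $\frac{\delta b}{\delta m}(\cdot,m)(m_1)$, uniform in $m$ and over $\|m_1\|_{-(n,\infty)}\le 1$) rather than plain $W_1$-Lipschitz continuity of $b$, and where the loss of one spatial derivative in the Schauder scheme (from $n+1$ down to $n$ on the source) is precisely compensated by the derivative gained in passing from $v$ to $\nabla v$.
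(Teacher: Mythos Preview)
Your proof is correct and follows essentially the same route as the paper: apply Proposition~\ref{thm schauder} directly to \eqref{eq: backward v} for existence/uniqueness and the uniform $W^{n+1,\infty}$ bound, then write the equation satisfied by the difference $v(\cdot,\mu)-v(\cdot,\hmu)$ and apply Proposition~\ref{thm schauder} again, controlling the source term via \regb[n] and \eqref{eq: bound buckdahn}. The only cosmetic difference is that the paper chooses $b(\cdot,m(s,\hmu))$ as the drift in the difference equation (so the source carries $\nabla v(\cdot,\mu)$), whereas you keep $b(\cdot,m(s,\mu))$ (so the source carries $\nabla v(\cdot,\hmu)$); both decompositions are equally valid.
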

    \begin{proof}
    The fact that $v \in L^{\infty} \big([0,t], W^{n+1,\infty}(\mathbb{T}^d)  \big)$ follows from Proposition \ref{thm schauder}. For the second part, take any $\mu,\hmu \in \cP(\bT^d)$. Let
$$ z(s,x):= v(s,x ,\mu)- v(s,x, \hmu).$$ 
Then $z$ satisfies the Cauchy problem
\begin{equation}
\begin{cases}
     \partial_s z(s,x) + \Delta z(s,x) + b(x, m(s, \hmu)) \cdot \nabla z(s,x)  =  \big( b(x, m(s, \hmu)) - b(x, m(s, \mu)) \big) \cdot \nabla v(s,x, \mu), \\
             z(t,x)= 0.
    \end{cases} \label{eq: backward r}
    \end{equation}
  Using the same argument as \eqref{ct estimate first part}, by \eqref{eq: bound buckdahn}, {\regb[n]} and Proposition \ref{thm schauder}, there exists a constant $C>0$ such that
  \begin{eqnarray} \sup_{s \in [0,t]} \| z(s, \cdot)\|_{n+1,\infty} & \leq &  C \sup_{s \in [0,t]} \Big\| \big( b(\cdot, m(s, \hmu)) - b(\cdot, m(s, \mu)) \big) \cdot \nabla v(s,\cdot, \mu) \Big\|_{n,\infty} \nonumber \\
  & \leq & C \sum_{i=1}^d \sup_{s \in [0,t]} \bigg[ \Big\|  b_i(\cdot, m(s, \hmu)) - b_i(\cdot, m(s, \mu)) \Big\|_{n,\infty} \| v(s, \cdot, \mu) \|_{n+1,\infty} \bigg] \nonumber \\
  & \leq & C W_1( \mu, \hmu). \nonumber 
  \end{eqnarray}
    \end{proof}
    The core analysis of backward Kolmogorov equations depends on the following fact. 
\begin{theorem}[Bound for backward Kolmogorov equations] \label{lions paper pde result second}
Assume \emph{\regb[n]}, where $n \geq 2$. Suppose that $\xi \in W^{n+1,\infty}$. Let $q \in L^{\infty} \big([0,t], ( W^{n,\infty}(\mathbb{T}^d) )' \big)$ and  $\gamma \in L^{\infty} \big([0,t],  W^{n,\infty}(\mathbb{T}^d) \big)$.  Then the Cauchy problem $h$   
 \begin{equation}
\left\{
\begin{array}{rrl}
      {} & \partial_s h (s,x) +\Delta h (s,x) +  b(x, m(s, \mu)) \cdot \nabla h(s,x)  + \ld[b](x, m(s,\mu))(q(s)) \cdot \nabla v(s,x,\mu) + \gamma(s,x) & =0, \\
      & &  \\
      {} &  h(t,x) &= 0, \\
\end{array} 
\right. \label{general backward} \end{equation} 
has a unique solution in $L^{\infty} \big([0,t],  W^{n+1,\infty}(\mathbb{T}^d) \big)$ such that
$$  \sup_{s \in [0,t]} \| h (s, \cdot) \|_{n+1,\infty} \leq C \bigg( \sup_{s \in [0,t]} \| q(s)  \|_{-(n,\infty)} + \sup_{s \in [0,t]} \| \gamma(s, \cdot)  \|_{n,\infty} \bigg) , $$ 
for some  constant $C>0$ depending on $\xi$. \end{theorem}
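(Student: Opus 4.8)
The plan is to recognise \eqref{general backward} as a linear backward viscous transport equation of precisely the form covered by Proposition \ref{thm schauder}: the transport field is $g(s,x) = b(x, m(s,\mu))$, the terminal datum is $z_T \equiv 0$, and the source is
\[
f(s,x) := \ld[b](x, m(s,\mu))(q(s)) \cdot \nabla v(s,x,\mu) + \gamma(s,x) .
\]
(Proposition \ref{thm schauder} is stated on $[0,T]$, but using it on the subinterval $[0,t]$ is just a relabelling.) So the only real work is to check that $f \in W^{0,n,\infty}([0,t] \times \bT^d)$, with an explicit bound on $\sup_{s}\| f(s,\cdot)\|_{n,\infty}$; then I would apply Proposition \ref{thm schauder} at order $n+1$. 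The hypotheses of that proposition at order $n+1$ ask for $g \in W^{0,n,\infty}$, which holds — uniformly in $\mu$ — by \regb[n], for $z_T \in W^{n+1,\infty}$, which is trivial, and for $f \in W^{0,n,\infty}$, which is what I must verify. The conclusion of the proposition then supplies existence and uniqueness of $h$ in the class $\cV$ (hence, given the estimate, in $L^{\infty}([0,t], W^{n+1,\infty}(\bT^d))$) together with $\sup_{s \in [0,t]}\| h(s,\cdot)\|_{n+1,\infty} \leq C \sup_{s \in [0,t]}\| f(s,\cdot)\|_{n,\infty}$, the constant $C$ depending only on $\sup_{\mu, s}\| b(\cdot, m(s,\mu))\|_{n,\infty}$, which \regb[n] bounds.

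To bound the source I would argue term by term. By \regb[n], for each coordinate $i$ the map $x \mapsto \ld[b_i](x, m(s,\mu))(q(s))$ lies in $W^{n,\infty}(\bT^d)$ with $\| \ld[b_i](\cdot, m(s,\mu))(q(s))\|_{n,\infty} \leq C \| q(s)\|_{-(n,\infty)}$, using linearity of $\frac{\delta b_i}{\delta m}$ in the measure slot. Next, Lemma \ref{reg v lemma} (equivalently, Proposition \ref{thm schauder} applied directly to \eqref{eq: backward v} at order $n+1$) gives $v(s,\cdot,\mu) \in W^{n+1,\infty}(\bT^d)$, so $\nabla v(s,\cdot,\mu) \in W^{n,\infty}(\bT^d)$ with a norm bounded uniformly in $s \in [0,t]$ and $\mu \in \cP(\bT^d)$ by a constant depending on $\xi$. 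Since $W^{n,\infty}(\bT^d)$ is a Banach algebra under pointwise multiplication (Leibniz rule), the product term is in $W^{n,\infty}(\bT^d)$ with norm at most $C_{\xi}\| q(s)\|_{-(n,\infty)}$. Adding $\gamma(s,\cdot) \in W^{n,\infty}(\bT^d)$ gives $\sup_{s}\| f(s,\cdot)\|_{n,\infty} \leq C_{\xi}\big( \sup_{s}\| q(s)\|_{-(n,\infty)} + \sup_{s}\| \gamma(s,\cdot)\|_{n,\infty}\big)$, and combining with the Schauder estimate finishes the proof; uniqueness is part of Proposition \ref{thm schauder}.

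I do not expect a genuine analytic obstacle here: the argument is essentially a repackaging of Proposition \ref{thm schauder} along the lines of Theorem \ref{lions paper pde result} and Lemma \ref{reg v lemma}. The one point that needs care is checking that the coupling term $\ld[b](x, m(s,\mu))(q(s)) \cdot \nabla v(s,x,\mu)$ carries $n$ bounded spatial derivatives with a norm depending \emph{linearly} on $\| q(s)\|_{-(n,\infty)}$ — which is exactly the content of \regb[n] — and that multiplying by $\nabla v$ preserves this, which is the algebra property of $W^{n,\infty}$. I would also make sure to track that every constant coming from Proposition \ref{thm schauder} depends on $\mu$ only through $\| b(\cdot, m(\cdot,\mu))\|_{0,n,\infty}$, so that the final constant is independent of $\mu$ (while still depending on $\xi$, through $\| \nabla v\|_{0,n,\infty}$, as the statement allows).
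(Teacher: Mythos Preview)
Your proposal is correct and follows essentially the same approach as the paper: recognise \eqref{general backward} as an instance of Proposition~\ref{thm schauder} at order $n+1$ with terminal datum $0$, transport field $b(\cdot,m(s,\mu))$, and source $f=\ld[b](\cdot,m(s,\mu))(q(s))\cdot\nabla v(s,\cdot,\mu)+\gamma(s,\cdot)$, then bound $\|f(s,\cdot)\|_{n,\infty}$ using \regb[n] for the $\ld[b]$ factor, Lemma~\ref{reg v lemma} for $\nabla v$, and the algebra property of $W^{n,\infty}$. The paper's proof is in fact just the three-line chain of inequalities you outlined, so your write-up is, if anything, more detailed than the original.
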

    \begin{proof}
By \regb[n] and Proposition \ref{thm schauder},
\begin{eqnarray}
\sup_{s \in [0,t]} \| h (s, \cdot) \|_{n+1,\infty} & \leq & C \bigg( \sup_{s \in [0,t]} \bigg\| \ld[b](\cdot, m(s,\mu))(q(s)) \cdot \nabla v(s,\cdot,\mu) \bigg\|_{\na}  + \sup_{s \in [0,t]}\| \gamma(s,\cdot) \|_{\na} \bigg)  \nonumber \\
& \leq & C \sum_{i=1}^d \bigg( \sup_{s \in [0,t]} \bigg[ \bigg\| \ld[b_i](\cdot, m(s,\mu))(q(s)) \bigg\|_{\na}  \|  v(s,\cdot,\mu) \|_{n+ 1 ,\infty}  \bigg] + \sup_{s \in [0,t]}\| \gamma(s,\cdot) \|_{\na} \bigg) \nonumber \\
& \leq & C \bigg( \sup_{s \in [0,t]} \| q(s)  \|_{-(n,\infty)} + \sup_{s \in [0,t]} \| \gamma(s, \cdot)  \|_{n,\infty} \bigg). \nonumber 
\end{eqnarray}
\end{proof}
  Formal differentiation of \eqref{eq: backward v} w.r.t. the measure component gives
 
 \begin{equation}
\left\{
\begin{array}{rrl}
      {} & \partial_s \vone(s,x, \mu, \hat{\mu}) +\Delta \vone(s,x, \mu, \hat{\mu}) +  b(x, m(s, \mu)) \cdot \nabla \vone(s,x, \mu, \hat{\mu})   &
     \\
      {} & + \ld[b](x, m(s,\mu))(m^{(1)}(s,\mu, \hmu)) \cdot \nabla v(s,x, \mu) & =0, \\
      & &  \\
      {} &  \vone(t,x, \mu, \hat{\mu}) &= 0. \\
\end{array} 
\right. \label{vone linearised} \end{equation} 
We now study the regularity of $\vone$. 
\begin{lemma} \label{representation of v one} 
    Assume \emph{\regb[n]}, where $n \geq 2$. Suppose that $\xi \in W^{n+1,\infty}$.  Then the Cauchy problem $v^{(1)}$ defined in \eqref{vone linearised}  has a unique solution in $L^{\infty} \big([0,t], W^{n+1,\infty}(\mathbb{T}^d)  \big)$. Moreover, $\vone$ satisfies the relation
    $$ \vone(0,x, \mu, \hmu) = \intrd \vone(0,x, \mu, \delta_{z}) \, ( \hmu- \mu)(dz).$$ 
\end{lemma}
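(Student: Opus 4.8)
The proof has two parts: existence, uniqueness and the a priori bound for $\vone$, which come straight from the backward estimate, and the averaging identity, which I would obtain by transferring the analogous identity from the forward level and exploiting that $\hmu$ enters \eqref{vone linearised} only through the source term $\ld[b](x,m(s,\mu))(m^{(1)}(s,\mu,\hmu))\cdot\nabla v(s,x,\mu)$, which is linear in $m^{(1)}(s,\mu,\hmu)$.

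For the first part, observe that \eqref{vone linearised} is exactly \eqref{general backward} with the choices $q(s):=m^{(1)}(s,\mu,\hmu)$ and $\gamma\equiv 0$. Under \regb[n], Lemma~\ref{ m1 regularity} gives $m^{(1)}(\cdot,\mu,\hmu)\in L^{\infty}\big([0,t],(W^{n,\infty}(\bT^d))'\big)$, so Theorem~\ref{lions paper pde result second} applies and produces a unique solution $\vone(\cdot,\cdot,\mu,\hmu)\in L^{\infty}\big([0,t],W^{n+1,\infty}(\bT^d)\big)$ together with the bound $\sup_{s}\|\vone(s,\cdot,\mu,\hmu)\|_{n+1,\infty}\le C\sup_{s}\|m^{(1)}(s,\mu,\hmu)\|_{-(n,\infty)}$.

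For the averaging identity I would first establish, for every $s\in[0,t]$ and in $(W^{n,\infty}(\bT^d))'$,
$$ m^{(1)}(s,\mu,\hmu)=\intrd m^{(1)}(s,\mu,\delta_{z})\,(\hmu-\mu)(dz). $$
Since $\hmu,\mu\in\cP(\bT^d)$ one has $\|\delta_{z}-\mu\|_{-(n,\infty)}\le 2$ for all $z$, so Theorem~\ref{lions paper pde result} (with $q_{0}=\delta_{z}-\mu$, $r\equiv 0$) bounds $\sup_{s}\|m^{(1)}(s,\mu,\delta_{z})\|_{-(n,\infty)}$ uniformly in $z\in\bT^d$, which makes the right-hand side a well-defined element of $L^{\infty}\big([0,t],(W^{n,\infty})'\big)$; since \eqref{PDE linearised} is linear in its unknown and $\intrd(\delta_{z}-\mu)(\hmu-\mu)(dz)=\hmu-\mu$, integrating the weak formulation of \eqref{PDE linearised} for $m^{(1)}(\cdot,\mu,\delta_{z})$ against $(\hmu-\mu)(dz)$ shows the right-hand side solves \eqref{PDE linearised} with initial datum $\hmu-\mu$, and the uniqueness part of Theorem~\ref{lions paper pde result} yields the identity. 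Then I would set $\tilde{h}(s,x):=\intrd\vone(s,x,\mu,\delta_{z})\,(\hmu-\mu)(dz)$, which lies in $L^{\infty}\big([0,t],W^{n+1,\infty}(\bT^d)\big)$ by Theorem~\ref{lions paper pde result second} and the uniform-in-$z$ control of $\|m^{(1)}(\cdot,\mu,\delta_{z})\|_{-(n,\infty)}$; it has zero terminal value, and integrating \eqref{vone linearised} (written for $\delta_{z}$) against $(\hmu-\mu)(dz)$, using linearity of $m_{1}\mapsto\ld[b](x,m(s,\mu))(m_{1})$ and the identity just proved for $m^{(1)}$, shows that $\tilde h$ solves \eqref{vone linearised} with the same source as $\vone(\cdot,\cdot,\mu,\hmu)$. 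The uniqueness in Theorem~\ref{lions paper pde result second} then forces $\tilde h=\vone(\cdot,\cdot,\mu,\hmu)$ on $[0,t]$, and evaluating at $s=0$ gives the claim. (Alternatively, one can use the Feynman--Kac representation of $\vone$ along the decoupled SDE \eqref{eq:decoupled}, as in the proof of Proposition~\ref{thm schauder}, and pass the $z$-integral through the expectation by Fubini.)

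The only routine input is the two Schauder bounds, which are already available. The main obstacle is the rigorous justification of the recurring step ``the $z$-integral of the parametrized family of solutions against $\hmu-\mu$ is again a solution of the corresponding linear Cauchy problem'': one must check joint measurability of $z\mapsto m^{(1)}(\cdot,\mu,\delta_{z})$ and $z\mapsto\vone(\cdot,\cdot,\mu,\delta_{z})$ into the relevant Banach spaces (which follows from the Lipschitz-in-$W_{1}$ continuous dependence of these solution maps, a by-product of the estimates behind Theorems~\ref{lions paper pde result} and~\ref{lions paper pde result second}), the uniform-in-$z$ integrability needed to interchange the $z$-integral with the weak spatial and temporal derivatives and with the duality pairings in the weak formulations, and that the averaged object stays within the uniqueness classes; all of this is supplied by the a priori bounds once $\|\delta_{z}-\mu\|_{-(n,\infty)}\le 2$ is invoked.
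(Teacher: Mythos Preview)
Your proof is correct and follows essentially the same route as the paper: invoke Theorem~\ref{lions paper pde result second} for existence and uniqueness, then integrate the equation for $\vone(\cdot,\cdot,\mu,\delta_{z})$ against $(\hmu-\mu)(dz)$ and conclude by uniqueness. You are in fact more explicit than the paper, which writes the integrated equation only at $s=0$ (where the normalisation of $\ld[b]$ makes the source term collapse) and then defers to a stability argument from \cite{cardaliaguet2015master}, whereas you first establish the forward identity $m^{(1)}(s,\mu,\hmu)=\intrd m^{(1)}(s,\mu,\delta_{z})\,(\hmu-\mu)(dz)$ for all $s$ and thereby match the full Cauchy problem on $[0,t]$ before appealing to uniqueness.
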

\begin{proof}
{The first part of the lemma follows directly from Theorem \ref{lions paper pde result second}.} For the second part, we note that $\vone(0,x, \mu, \delta_{z})$ satisfies 
 \begin{equation*}
\begin{array}{rrl}
      {} & \partial_s \vone(0,x, \mu, \delta_{z}) +\Delta \vone(0,x, \mu, \delta_{z}) +  b(x, m(0, \mu)) \cdot \nabla \vone(0,x, \mu, \delta_{z})   &
     \\
      {} & + \ld[b](x, m(0,\mu))(z) \cdot \nabla v(0,x, \mu) & =0, \\
     \end{array} 
\end{equation*} 
where the final term uses the normalisation condition of $\ld[b]$. Integrating both sides w.r.t. $z$ with measure $\hmu- \mu$, we have
\begin{equation}
\begin{array}{rl}
       \partial_s \Big[ \intrd \vone(0,x, \mu, \delta_{z}) \, (\hmu-\mu)(dz) \Big]  +\Delta_x \Big[ \intrd \vone(0,x, \mu, \delta_{z}) \, (\hmu-\mu)(dz) \Big] & \\
       +  b(x, m(0, \mu)) \cdot \nabla_x \Big[ \intrd \vone(0,x, \mu, \delta_{z}) \, (\hmu-\mu)(dz) \Big] + \ld[b](x, m(0,\mu))(m^{(1)}(0,\mu, \hmu)) \cdot \nabla v(0,x, \mu)  & =0. \\
     \end{array} \label{eq: v1 integral system}
\end{equation} 
By comparing \eqref{vone linearised} and \eqref{eq: v1 integral system}, the result follows by an argument of stability similar to Corollary 3.4.2 of \cite{cardaliaguet2015master}.
\end{proof}
As before, we consider the difference
$$ \Gamma (s,x, \mu, \hmu):= v(s,x, \hmu) - v(s,x, \mu) - \vone(s,x, \mu, \hmu). $$ 
Then $\Gamma$ satisfies the Cauchy problem 
 \begin{equation}
\left\{
\begin{array}{rrl}
      {} & \partial_s \Gamma(s,x, \mu, \hat{\mu}) +\Delta \Gamma(s,x, \mu, \hat{\mu}) +  b(x, m(s, \mu)) \cdot \nabla \Gamma(s,x, \mu, \hat{\mu})  \\
      & +\ld[b](x, m(s,\mu))(\rho(s,\mu, \hmu)) \cdot \nabla v(s,x, \mu) + F(s,x, \mu, \hmu) & =0, \\
      & &  \\
      {} &  \Gamma(t,x, \mu, \hat{\mu}) &= 0, \\
\end{array} 
\right. \label{gamma linearised} \end{equation} 
where
\begin{eqnarray}
F(s,x, \mu, \hmu) & := &  \Big( b(x, m(s, \hmu)) - b(x, m(s, \mu)) \Big) \cdot \nabla v(s,x, \hat{\mu})  \nonumber \\
&& -\bigg[\ld[b](x, m(s,\mu))(\rho(s,\mu, \hmu)) +\ld[b](x, m(s,\mu))(m^{(1)}(s,\mu, \hmu)) \bigg]  \cdot \nabla v(s,x, \mu) \nonumber \\
& = & \Big( b(x, m(s, \hmu)) - b(x, m(s, \mu)) \Big) \cdot \Big( \nabla v(s,x, \hat{\mu})  - \nabla v(s,x, {\mu}) \Big) \nonumber \\
&& + \bigg[ \Big( b(x, m(s, \hmu)) - b(x, m(s, \mu)) \Big) - \ld[b](x, m(s,\mu)) \big( m(s, \hmu) - m(s, \mu) \big)  \bigg] \cdot \nabla v(s,x, \mu) \nonumber \\
& = & \Big( b(x, m(s, \hmu)) - b(x, m(s, \mu)) \Big) \cdot \Big( \nabla v(s,x, \hat{\mu})  - \nabla v(s,x, {\mu}) \Big) \nonumber \\
&& + \int_0^1 \intrd \bigg[ \ld[b] \big( x, u m(s, \hmu) + (1-u) m(s,\mu) \big)(y) \nonumber \\
&& \quad \quad  \quad \quad \quad  - \ld[b] (x,m(s, \mu))(y) \bigg] \, \big( m(s, \hmu)- m(s, \mu) \big)(dy) \, du. \label{eq: F explicit form} 
\end{eqnarray}
The following result is immediate.
\begin{theorem} \label{thm derivative of v explicit form} 
 Assume \emph{\regb[n]}  and \emph{\lipb[0]}, where $n \geq 2$. Suppose that $\xi \in W^{n+1,\infty}$. Then $ \ld[v](0,x, \mu,y)$ exists and is given by
 $$\ld[v](0,x, \mu,y) = \vone (0,x , \mu, \delta_{y}). $$ 
\end{theorem}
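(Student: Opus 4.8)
The plan is to mimic the proof of Theorem~\ref{thm m m-one}, with the backward Cauchy problem \eqref{gamma linearised} for $\Gamma(s,x,\mu,\hmu):=v(s,x,\hmu)-v(s,x,\mu)-\vone(s,x,\mu,\hmu)$ playing the role of the forward equation for $\rho$. That Cauchy problem is driven by the measure $q(s):=\rho(s,\mu,\hmu)$, which enters through the term $\frac{\delta b}{\delta m}(\cdot,m(s,\mu))(q(s))\cdot\nabla v(s,\cdot,\mu)$, and by the inhomogeneity $\gamma(s,\cdot):=F(s,\cdot,\mu,\hmu)$ of \eqref{eq: F explicit form}. The one substantial step is the quadratic bound $\sup_{s\in[0,t]}\|F(s,\cdot,\mu,\hmu)\|_{n,\infty}\le C\,W_1(\mu,\hmu)^2$. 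For the first summand of \eqref{eq: F explicit form} I would write $b(\cdot,m(s,\hmu))-b(\cdot,m(s,\mu))$ as a $\frac{\delta b}{\delta m}$-integral and bound its $\|\cdot\|_{n,\infty}$-norm by $C\,W_1(m(s,\hmu),m(s,\mu))\le C\,W_1(\mu,\hmu)$, using \regb[n], \eqref{eq: bound buckdahn} and $\|\nu_1-\nu_2\|_{-(n,\infty)}\le W_1(\nu_1,\nu_2)$ for probability measures; pairing it with $\nabla v(s,\cdot,\hmu)-\nabla v(s,\cdot,\mu)$, which is $O(W_1(\mu,\hmu))$ in $\|\cdot\|_{n,\infty}$ by Lemma~\ref{reg v lemma}, and using that $W^{n,\infty}(\bT^d)$ is a Banach algebra gives $O(W_1(\mu,\hmu)^2)$ for this summand. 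The second summand is a second-order Taylor remainder of $b$ in the measure variable and is handled exactly as in \eqref{ct estimate second part}: \lipb[0] produces one factor of $W_1$ (Lipschitz-in-measure of $\frac{\delta b}{\delta m}$), and a Kantorovich--Rubinstein estimate of the $y$-integration against $m(s,\hmu)-m(s,\mu)$ produces the second.

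Granting this, I would apply Theorem~\ref{lions paper pde result second} with $q=\rho(\cdot,\mu,\hmu)$ and $\gamma=F(\cdot,\mu,\hmu)$ and use Theorem~\ref{thm m m-one}(i), namely $\sup_s\|\rho(s,\mu,\hmu)\|_{-(n,\infty)}\le C\,W_1(\mu,\hmu)^2$, to conclude $\sup_{s\in[0,t]}\|\Gamma(s,\cdot,\mu,\hmu)\|_{n+1,\infty}\le C\,W_1(\mu,\hmu)^2$; in particular $|v(0,x,\hmu)-v(0,x,\mu)-\vone(0,x,\mu,\hmu)|\le C\,W_1(\mu,\hmu)^2$ uniformly in $x$. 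Finally, to identify $y\mapsto\vone(0,x,\mu,\delta_y)$ with $\frac{\delta v}{\delta m}(0,x,\mu,\cdot)$, I would run the rescaling argument of Theorem~\ref{thm m m-one}(iii): writing $\mu_s:=(1-s)\mu+s\hmu$, the reparametrization $\mu_{s+\eps}=(1-\tfrac{\eps}{1-s})\mu_s+\tfrac{\eps}{1-s}\hmu$ (with its analogue in terms of $\mu$ for $\eps<0$), the bound just obtained, and Lemma~\ref{representation of v one} — which also gives $\vone(0,x,\mu_s,\mu_{s+\eps})=\eps\int_{\bT^d}\vone(0,x,\mu_s,\delta_y)(\hmu-\mu)(dy)$ — yield $\frac{d}{ds}v(0,x,\mu_s)=\int_{\bT^d}\vone(0,x,\mu_s,\delta_y)(\hmu-\mu)(dy)$ for $s\in(0,1)$; since this is continuous in $s$ and $s\mapsto v(0,x,\mu_s)$ is continuous, integrating over $[0,1]$ is precisely the defining relation \eqref{eq de first order deriv} for $\frac{\delta v}{\delta m}(0,x,\mu,y)=\vone(0,x,\mu,\delta_y)$. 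Joint continuity in $(\mu,y)$ follows from the stability estimates already available for $v$ and $\vone$, and the normalisation $\int_{\bT^d}\vone(0,x,\mu,\delta_y)\,\mu(dy)=0$ follows by integrating \eqref{vone linearised} against $\mu$ and using the normalisation of $\frac{\delta b}{\delta m}$.

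I expect the only real work to be the $W^{n,\infty}$-estimate on $F$, in particular extracting the second power of $W_1$ from the second-order remainder of $b$ out of the mere Lipschitz-in-measure hypothesis \lipb[0]; once that is granted, the remaining assembly of Theorems~\ref{lions paper pde result second}, \ref{thm m m-one} and Lemma~\ref{representation of v one} is mechanical, which is why the result is announced as immediate.
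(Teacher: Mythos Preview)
Your proposal is correct and follows essentially the same route as the paper: bound $F$ in $W^{n,\infty}$ by $C\,W_1(\mu,\hmu)^2$ via \regb[n], \lipb[0], \eqref{eq: bound buckdahn} and Lemma~\ref{reg v lemma}, feed $q=\rho$ and $\gamma=F$ into Theorem~\ref{lions paper pde result second} together with Theorem~\ref{thm m m-one}(i), and obtain $\|\Gamma\|_{n+1,\infty}\le C\,W_1(\mu,\hmu)^2$.

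The only difference is in the final identification of $\frac{\delta v}{\delta m}$. The paper combines the $\Gamma$ bound with Lemma~\ref{representation of v one} to get
\[
\Bigl\| v(0,\cdot,\hmu)-v(0,\cdot,\mu)-\int_{\bT^d} v^{(1)}(0,\cdot,\mu,\delta_z)\,(\hmu-\mu)(dz)\Bigr\|_{n+1,\infty}\le C\,W_1(\mu,\hmu)^2
\]
and then simply cites the characterisation of linear functional derivatives in Remark~5.47 of \cite{carmona2017probabilistic}. You instead unfold that characterisation by hand via the interpolation $\mu_s=(1-s)\mu+s\hmu$ and explicitly compute $\frac{d}{ds}v(0,x,\mu_s)$, then integrate to recover \eqref{eq de first order deriv}. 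This is exactly what the cited remark does internally, so the two arguments are equivalent; yours is more self-contained, the paper's is shorter. One small correction: your justification of the normalisation $\int v^{(1)}(0,x,\mu,\delta_y)\,\mu(dy)=0$ is not really ``the normalisation of $\frac{\delta b}{\delta m}$'' but rather the linearity of $\hmu\mapsto m^{(1)}(s,\mu,\hmu)$, which gives $\int m^{(1)}(s,\mu,\delta_y)\,\mu(dy)=m^{(1)}(s,\mu,\mu)=0$; the source term in the integrated backward equation then vanishes and uniqueness concludes.
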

\begin{proof}
We proceed in the same way as in the proof of Lemma \ref{verify regularity of c}. By {\regb[n]}, \lipb[0], \eqref{eq: bound buckdahn}, \eqref{W1 W2 estimates linear interpolation} and  Lemma \ref{reg v lemma}, we deduce from \eqref{eq: F explicit form}  that
$$ \sup_{s \in [0,t]} \| F(s, \cdot, \mu, \hat{\mu})  \|_{n,\infty} \leq C W_1(\mu, \hmu)^2, $$
for some constant $C>0$ depending on $\xi$. Therefore, by Theorem \ref{lions paper pde result second}, 
$$ \sup_{s \in [0,t]} \| \Gamma(s, \cdot, \mu, \hat{\mu})  \|_{n+1,\infty} \leq C W_1(\mu, \hmu)^2. $$
Therefore, by Lemma \ref{representation of v one},
$$ \bigg\| v(0,\cdot, \hmu) - v(0,\cdot,\mu) - \intrd v^{1}(0,\cdot, \mu,  \delta_z) \, (\hmu-\mu)(dz) \bigg\|_{n+1,\infty} \leq C W_1(\mu, \hmu)^2.$$ 
We conclude the result by the characterisation of linear functional derivatives in Remark 5.47 of \cite{carmona2017probabilistic}.
\end{proof}
\begin{corollary}[Existence of the first order linear derivative] \label{Corr existence first order} 
Assume \emph{\regb[n]}, \emph{\lipb[0]}, \emph{\hlipphi[1]} and \emph{\hregphi{n+1}{1}}, where $n \geq 2$. Then $\ld[\cU]$ exists and is given by
$$ \ld[\cU](t,\mu)(x) = v \Big(0,x, \mu; \ld[\Phi] (m(t, \mu))(\cdot) ,t \Big) + \int_{\bT^d}  \frac{\delta v}{\delta m} \Big(0,z, \mu,x; \ld[\Phi] (m(t, \mu))(\cdot) ,t \Big)   \, \mu(dz), $$ 
for every $\mu \in \cP(\bT^d)$. 
\end{corollary}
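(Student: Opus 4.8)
The plan is to identify the right-hand side of the claimed formula---call it
\[
\Theta_{t,\mu}(x):=v\Big(0,x,\mu;\frac{\delta \Phi}{\delta m}(m(t,\mu))(\cdot),t\Big)+\int_{\bT^d}\frac{\delta v}{\delta m}\Big(0,z,\mu,x;\frac{\delta \Phi}{\delta m}(m(t,\mu))(\cdot),t\Big)\,\mu(dz)
\]
---with $\frac{\delta \cU}{\delta m}(t,\mu)(x)$, by combining the forward estimate of Theorem~\ref{thm m m-one}(ii) with the backward result of Theorem~\ref{thm derivative of v explicit form} and then invoking the characterisation of linear functional derivatives via a first-order expansion with quadratic remainder. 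First I would record that, by \hregphi{n+1}{1}, the terminal datum $\xi_{\mu}:=\frac{\delta \Phi}{\delta m}(m(t,\mu))(\cdot)$ lies in $W^{n+1,\infty}(\bT^d)$ with norm bounded uniformly in $\mu$, so that Lemma~\ref{reg v lemma}, Theorem~\ref{lions paper pde result second} and Theorem~\ref{thm derivative of v explicit form} all apply with this $\xi$; moreover $n\ge 2$ and \hregphi{n+1}{1} implies \hregphi{n}{1}, so all hypotheses of Theorem~\ref{thm m m-one} are in force.

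Next I would exploit the identity \eqref{v phi connection 2} with $\xi=\xi_{\mu}$: obtained heuristically earlier under the proviso that $\frac{\delta v}{\delta m}$ exists, it is now fully justified because Theorem~\ref{thm derivative of v explicit form} gives $\frac{\delta v}{\delta m}(0,z,\mu,x)=\vone(0,z,\mu,\delta_{x})$, and the differentiation-under-the-integral step in its derivation is controlled by the bound $\|v(0,\cdot,\hmu)-v(0,\cdot,\mu)-\int_{\bT^d}\vone(0,\cdot,\mu,\delta_{z})(\hmu-\mu)(dz)\|_{n+1,\infty}\le CW_1(\mu,\hmu)^{2}$ established inside the proof of that theorem, together with $\xi_{\mu}\in W^{n,\infty}(\bT^d)$ and $m^{(1)}(t,\mu,\hmu)\in(W^{n,\infty}(\bT^d))'$ (Lemma~\ref{ m1 regularity}, and Theorem~\ref{thm m m-one}(i) identifying $m^{(1)}$ as the $(W^{n,\infty})'$-derivative of $\mu'\mapsto m(t,\mu')$) for the left-hand side. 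This yields $\int_{\bT^d}\xi_{\mu}(x)\,m^{(1)}(t,\mu,\hmu)(dx)=\int_{\bT^d}\Theta_{t,\mu}(x)\,(\hmu-\mu)(dx)$, and substituting into Theorem~\ref{thm m m-one}(ii) produces
\[
\Big|\cU(t,\hmu)-\cU(t,\mu)-\int_{\bT^d}\Theta_{t,\mu}(x)\,(\hmu-\mu)(dx)\Big|\le C\,W_1(\mu,\hmu)^{2},\qquad \mu,\hmu\in\cP(\bT^d).
\]

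Then I would verify that $(\mu,x)\mapsto\Theta_{t,\mu}(x)$ is continuous for the product topology on $\cP(\bT^d)\times\bT^d$. Continuity in $x$ is immediate from the $W^{n+1,\infty}$-regularity of $v$ and $\frac{\delta v}{\delta m}$ (Lemma~\ref{reg v lemma}, Theorem~\ref{lions paper pde result second}). Continuity in $\mu$ is the delicate point: $v$ and $\frac{\delta v}{\delta m}$ depend on $\mu$ through two channels---the drift $b(\cdot,m(\cdot,\mu))$ and the terminal datum $\xi_{\mu}$---so one must combine the Lipschitz dependence of $m(t,\cdot)$ coming from \eqref{eq: bound buckdahn}, the continuity of $m\mapsto\frac{\delta \Phi}{\delta m}(m)(\cdot)$ (from \hlipphi[1], the uniform $W^{n+1,\infty}$-bound of \hregphi{n+1}{1}, and the normalisation convention), and the stability of the backward problems \eqref{eq: backward v} and \eqref{vone linearised} with respect to both coefficients and data, in the style of Lemma~\ref{reg v lemma}, Theorem~\ref{lions paper pde result second} and the proof of Theorem~\ref{thm derivative of v explicit form}. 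I expect this continuity verification---in particular the bookkeeping of the two channels through which $\mu$ enters---to be the main obstacle.

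Finally, with the displayed quadratic expansion and the joint continuity of $\Theta_{t,\mu}(x)$ in hand, the characterisation of linear functional derivatives (Remark~5.47 of \cite{carmona2017probabilistic}, used already in the proof of Theorem~\ref{thm derivative of v explicit form}) identifies $\Theta_{t,\mu}(x)$ with $\frac{\delta \cU}{\delta m}(t,\mu)(x)$ up to an additive constant in $x$. To see that no correction is needed I would check the normalisation: $\int_{\bT^d}\frac{\delta v}{\delta m}(0,z,\mu,x;\xi_{\mu},t)\,\mu(dx)=0$ by the normalisation of $\frac{\delta v}{\delta m}$, while $\int_{\bT^d}v(0,x,\mu;\xi_{\mu},t)\,\mu(dx)=\int_{\bT^d}\xi_{\mu}(x)\,m(t,\mu)(dx)=\int_{\bT^d}\frac{\delta \Phi}{\delta m}(m(t,\mu))(x)\,m(t,\mu)(dx)=0$ by the normalisation of $\frac{\delta \Phi}{\delta m}$; hence $\int_{\bT^d}\Theta_{t,\mu}(x)\,\mu(dx)=0$ and $\Theta_{t,\mu}$ is already in normalised form, so it coincides with $\frac{\delta \cU}{\delta m}(t,\mu)(x)$, which is precisely the asserted formula.
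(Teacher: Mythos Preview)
Your proposal is correct and lands on essentially the same argument as the paper, with two cosmetic differences worth noting. First, the paper freezes the terminal datum by introducing an auxiliary measure $\mu_0$, derives \eqref{v phi connection 2} with $\xi=\frac{\delta\Phi}{\delta m}(m(t,\mu_0))(\cdot)$, and only afterwards sets $\mu_0=\mu$; you instead work directly with $\xi_\mu$ at the fixed base point $\mu$, which is equivalent since in \eqref{v phi connection 2} the terminal datum is held fixed while only the perturbation parameter varies. Second, the paper concludes via part~(iii) of Theorem~\ref{thm m m-one} (the directional derivative), whereas you go through part~(ii) (the quadratic estimate) and then invoke the Remark~5.47 characterisation; the latter route is slightly more explicit about the joint continuity of the candidate derivative, which the paper leaves implicit. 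Your normalisation check at the end is a nice addition that the paper omits.
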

\begin{proof}
Fix $\mu_0 \in \cP(\bT^d)$. Firstly, we recall from \eqref{v phi connection 1} that
\begin{equation}  \int_{\bT^d} \ld[\Phi] (m(t, \mu_0))(x) \, m(t,(1- \eps) \mu + \eps \hat{\mu} )(dx) = \int_{\bT^d} v \Big(0,x, (1- \eps) \mu + \eps \hat{\mu} ; \ld[\Phi] (m(t, \mu_0))(\cdot) ,t \Big) \, \big( (1- \eps) \mu + \eps \hat{\mu} \big)(dx) . \label{ Phi v corollary relation} 
\end{equation}
Since $\Phi$ satisfies {\hlipphi[1]} and {\hregphi{n+1}{1}}, the function
$$ \tilde{\Phi}( {\mu}) := \intrd \ld[\Phi] (m(t, \mu_0))(x) \, {\mu}(dx)$$ 
satisfies {\hliptphi} and {\regtphi[n+1]}. Moreover, 
$$ \xi(x):= \ld[\Phi] (m(t, \mu_0))(x)$$ 
lies in $W^{n+1,\infty}$. Therefore, by part (iii) of Theorem \ref{thm m m-one}  and Theorem \ref{thm derivative of v explicit form}, we differentiate \eqref{ Phi v corollary relation} w.r.t. $\eps$ at $0$, which gives (by \eqref{v phi connection 2})
\begin{eqnarray}
&& \int_{\bT^d} \ld[\Phi] (m(t, \mu_0))(x) \, m^{(1)}(t, \mu, \hat{\mu})(dx) \nonumber \\
& = &  \int_{\bT^d}  \bigg[ v \Big(0,x, \mu; \ld[\Phi] (m(t, \mu_0))(\cdot) ,t \Big) + \int_{\bT^d}  \frac{\delta v}{\delta m} \Big(0,z, \mu,x; \ld[\Phi] (m(t, \mu_0))(\cdot) ,t \Big)   \, \mu(dz) \bigg] (\hat{\mu}- \mu)(dx). \nonumber 
\end{eqnarray}
Putting $\mu_0= \mu$, we have
\begin{eqnarray}
&& \int_{\bT^d} \ld[\Phi] (m(t, \mu))(x) \, m^{(1)}(t, \mu, \hat{\mu})(dx) \nonumber \\
& = &  \int_{\bT^d}  \bigg[ v \Big(0,x, \mu; \ld[\Phi] (m(t, \mu))(\cdot) ,t \Big) + \int_{\bT^d}  \frac{\delta v}{\delta m} \Big(0,z, \mu,x; \ld[\Phi] (m(t, \mu))(\cdot) ,t \Big)   \, \mu(dz) \bigg] (\hat{\mu}- \mu)(dx). \nonumber 
\end{eqnarray}
Finally, by part (iii) of Theorem \ref{thm m m-one}, we conclude that $\ld[\cU]$ exists and is given by
$$ \ld[\cU](t,\mu)(x) = v \Big(0,x, \mu; \ld[\Phi] (m(t, \mu))(\cdot) ,t \Big) + \int_{\bT^d}  \frac{\delta v}{\delta m} \Big(0,z, \mu,x; \ld[\Phi] (m(t, \mu))(\cdot) ,t \Big)   \, \mu(dz). $$ 
\end{proof}
\section{Higher order forward and backward Kolmogorov equations}
In this section, we repeat the same procedure in the previous section to establish regularity of higher order Kolmogorov equations. In order to proceed with an iteration argument, we first introduce the following class of multi-indices in the class $\tau_k$.
\subsection{Definitions and notations for iteration in multi-indices in the class \texorpdfstring{$\tau_k$}{tau-k}}
\begin{definition}[Class \texorpdfstring{$\tau_k$}{tau-k} of multi-indices]
For any $k \in \bN$, the class $\tau_k$ contains all multi-indices of the form
\begin{equation} {\lambda}:=  \bigg({\hat{n}}, ( \beta_j )_{j=1}^{{\hat{n}}}, ( {\alpha_{i,j}} )_{\substack{1 \leq i \leq {\hat{n}} \\ 1 \leq j \leq \beta_{i}}}, \hat{\beta}, ( \hat{\alpha}_{\ell} )_{1 \leq \ell \leq \hat{\beta}} \bigg), \label{lambda def} \end{equation}
where ${\hat{n}}$, $\beta_j$ and $\hat{\beta}$ are non-negative integers and $ \alpha_{i,j}$, $ \hat{\alpha}_{\ell} $,  $1 \leq i \leq {\hat{n}}$,  $ 1 \leq j \leq \beta_{i}$, $1 \leq \ell \leq \hat{\beta} $, are positive integers satisfying
\begin{enumerate}[(i)]
\item $ {\hat{n}} \leq k, \quad \quad 1 \leq \alpha_{i,1}< \ldots<  \alpha_{i,\beta_{i}} \leq k, \quad \quad 1 \leq \hat{\alpha}_1< \ldots<  \hat{\alpha}_{\hat{\beta}} \leq k, $
\item $ \beta_1, \ldots, \beta_{\hat{n}}, \hat{\beta} <k,$
\item exactly one of $\alpha_{i,j}$ and $\hat{\alpha}_{\ell}$ is equal to $k$,
\item \begin{equation} \sum_{i=1}^{\hat{n}} \beta_i + \hat{\beta} = k, \label{eq: sum condition}  \end{equation}
\item for any $i,i' \in \{1, \ldots, \hat{n} \},$ \begin{equation} \Big\{ \alpha_{i,1}, \ldots, \alpha_{i, \beta_i} \Big\} \cap  \Big\{ \alpha_{i',1}, \ldots, \alpha_{i', \beta_{i'}} \Big\}  = \emptyset,  \quad \quad   \Big\{ \alpha_{i,1}, \ldots, \alpha_{i, \beta_i} \Big\} \cap \Big\{ \hat{\alpha}_1, \ldots, \hat{\alpha}_{\hat{\beta}}\Big\} = \emptyset. \label{eq: emptyset condition}  \end{equation} 
\end{enumerate}
In particular, $o(\lambda)$ is called the \emph{order} of $\lambda$ defined by
$$ o(\lambda):={\hat{n}}.$$ 
Moreover, for any $(\lambda^{(1)}, \ldots, \lambda^{(q)}) \in (\tau_k)^q$, we define the \emph{magnitude} of $(\lambda^{(1)}, \ldots, \lambda^{(q)})$ by
$$ m\big( (\lambda^{(1)}, \ldots, \lambda^{(q)}) \big) := q.$$ If $\lambda= \lambda^{(i)}$, for some $i \in \{1, \ldots, q \}$, we write
$$ \lambda \in e \big( (\lambda^{(1)}, \ldots, \lambda^{(q)}) \big):= \{ \lambda^{(1)}, \ldots, \lambda^{(q)} \}.$$ 
\end{definition}
\begin{remark}
This definition is modified accordingly when one of ${\hat{n}}$, $\beta_j$ and $\hat{\beta}$ is zero. When ${\hat{n}}=0$, we set $\lambda:= \big(0, \hat{\beta},  (\hat{\alpha}_{\ell} )_{1 \leq \ell \leq \hat{\beta}} \big) $. On the other hand, when $\hat{\beta}=0$, we set $\lambda:= \Big({\hat{n}}, ( \beta_j )_{j=1}^{{\hat{n}}}, ( {\alpha_{i,j}} )_{\substack{1 \leq i \leq {\hat{n}} \\ 1 \leq j \leq \beta_{i}}}, 0 \Big). $ Finally, when $\beta_{j_0}=0$, for some $j_0 \in \{1, \ldots, {\hat{n}}\}$, the column entry of $j_0$ disappears in the array $( {\alpha_{i,j}} )$.
\end{remark}
Next, we introduce the recurrence map $T_k$ for multi-indices, followed by the sequence of multi-dimensional vectors $\lambda_k $ of elements in $\tau_k$.
\begin{definition}[Recurrence map $T_k$] \label{ def Tk} 
Let $\lambda \in \tau_k$ be given by the form \eqref{lambda def}. We define a recurrence map $T_k$ by 
\begin{eqnarray}
(\tau_{k+1})^{o(\lambda)+2} \ni T_k(\lambda) &:=& \bigg( \Big( {\hat{n}}+1, ( \beta_1, \ldots, \beta_{\hat{n}}, 1 ), ({\alpha}_{1,1}, \ldots, {\alpha}_{{\hat{n}}, \beta_{\hat{n}}},k+1), \hat{\beta}, (\hat{\alpha}_{\ell})_{1 \leq \ell \leq \hat{\beta}} \Big), \nonumber \\
&& \Big( {\hat{n}}, ( \beta_1, \ldots, \beta_{p-1}, \beta_p +1, \beta_{p+1}, \ldots, \beta_{\hat{n}}), \nonumber \\
&& ( \alpha_{1,1}, \ldots, \alpha_{p-1, \beta_{p-1}}, \alpha_{p,1}, \ldots, \alpha_{p, \beta_p}, k+1, \alpha_{p+1,1}, \ldots, \alpha_{{\hat{n}}, \beta_{\hat{n}}} ), \hat{\beta}, (\hat{\alpha}_{\ell})_{1 \leq \ell \leq \hat{\beta}} \Big)_{1 \leq p \leq {\hat{n}}}, \nonumber \\
&& \Big({\hat{n}}, ( \beta_j )_{j=1}^{{\hat{n}}}, ( {\alpha_{i,j}} )_{\substack{1 \leq i \leq {\hat{n}} \\ 1 \leq j \leq \beta_{i}}}, \hat{\beta}+1, (\hat{\alpha}_1, \ldots,\hat{\alpha}_{\hat{\beta}}, k+1) \Big) \bigg). \nonumber 
\end{eqnarray}
\end{definition}
\begin{definition}[Multi-dimensional vectors $\lambda_k $ of elements in $\tau_k$] \label{ def  lambda k} 
We first define
\begin{eqnarray}
\lambda_{2} & : = & \Big( \big(1, (1), (2), 1,(1) \big), \nonumber \\
&& \, \, \, \big(2, (1,1), (1, 2), 0 \big), \nonumber \\
&& \, \, \, \big(1, (1), (1), 1, (2) \big) \Big) \in \big( \tau_2)^3. \nonumber
\end{eqnarray}
For every $k \geq 2$,  we define a multi-dimensional vector $\lambda_{k+1}$ of elements in $\tau_{k+1}$ by the recurrence relation
\begin{eqnarray}
\lambda_{k+1} & : = & \Big( \big(1, (1), ( k+1), k,(1, \ldots, k) \big), \nonumber \\
&& \, \, \, \big(2, (k,1), (1, \ldots, k,k+1), 0 \big), \nonumber \\
&& \, \, \, \big(1, (k), (1, \ldots, k), 1, (k+1) \big), \nonumber \\
&& \, \, \,  T_k(\lambda^{(1)}_k), \ldots, T_k(\lambda^{(m(\lambda_k))}_k ) \Big), \nonumber
\end{eqnarray}
for $\lambda_{k} = \big(\lambda^{(1)}_k, \ldots,  \lambda^{(m(\lambda_k))}_k \big)$.
\end{definition}
\subsection{Analysis of higher order forward Kolmogorov equations}
In this subsection, we consider the following Cauchy problem (defined recursively by \eqref{ F lambda one dim}, \eqref{ F lambda multi dim}, Definition \ref{ def Tk}  and Definition \ref{ def  lambda k}):
\begin{equation}
\left\{
\begin{array}{rrl}
      {} & \partial_t m^{(k)} (t, \mu, \mu_1, \ldots, \mu_k)- \Delta m^{(k)} (t, \mu, \mu_1, \ldots, \mu_k)  + \text{div} \big( b(\cdot,m(t, \mu)) m^{(k)}(t, \mu, \mu_1, \ldots, \mu_k) \big)  \\
      & + \text{div} \Big(m(t, \mu) \frac{\delta b}{\delta m} (\cdot,m(t, \mu)) \big(m^{(k)}(t, \mu, \mu_1, \ldots, \mu_k) \big) \Big) - F_{\lambda_k}(t,\mu, \mu_1, \ldots, \mu_k)   & =0, \\
      & &  \\
      {} &  m^{(k)} (0, \mu, \mu_1, \ldots, \mu_k) &= \mu_k- \mu,\\
\end{array} 
\right. \label{PDE linearised k}  \end{equation} 
where, for $k=1$, $F_{\lambda_1}(t,\mu,\mu_1):=0. $
For $\lambda \in \tau_k$ given by \eqref{lambda def}, we define   
\begin{eqnarray}
F_{\lambda} (t, \mu, \mu_1, \ldots, \mu_k) & := &   -\text{div} \bigg[ m^{({\hat{\beta}})}  \Big(t, \mu, \mu_{{\hat{\alpha}}_1}, \ldots, \mu_{{\hat{\alpha}}_{\hat{\beta}}} \Big)  \,   \frac{ \delta^{{{\hat{n}}}} b}{ \delta m^{{{\hat{n}}}}}(\cdot, m(t, \mu)) \bigg( m^{(\beta_1)} \Big(t, \mu, \mu_{\alpha_{1,1}}, \ldots, \mu_{\alpha_{1,\beta_1}} \Big), \ldots, \nonumber \\
& & m^{(\beta_{{\hat{n}}})} \Big(t, \mu, \mu_{\alpha_{{\hat{n}},1}}, \ldots, \mu_{\alpha_{{\hat{n}},\beta_{\hat{n}}}} \Big) \bigg) \bigg]   \,. \label{ F lambda one dim} 
\end{eqnarray}
Note that  $F_{\lambda} (t, \mu, \mu_1, \ldots, \mu_k)$ can be interpreted as an element in the dual space $(W^{n+k-1, \infty}(\bT^d) )'$ (under the assumption \hregb{n+k-1}{k}):
\begin{eqnarray}
&& \lev \xi, F_{\lambda} (t,\mu, \mu_1, \ldots, \mu_k) \rev_{n+k-1, \infty}  \nonumber \\
& := &  \int_{\bT^d} \bigg[ \frac{ \delta^{{{\hat{n}}}} b}{ \delta m^{{{\hat{n}}}}}(x, m(t, \mu)) \bigg( m^{(\beta_1)} \Big(t, \mu, \mu_{\alpha_{1,1}}, \ldots, \mu_{\alpha_{1,\beta_1}} \Big), \ldots, \nonumber \\
& & m^{(\beta_{{\hat{n}}})} \Big(t, \mu, \mu_{\alpha_{{\hat{n}},1}}, \ldots, \mu_{\alpha_{{\hat{n}},\beta_{\hat{n}}}} \Big) \bigg) \cdot \nabla \xi (x) \bigg] \quad \, m^{({\hat{\beta}})}  \Big(t, \mu, \mu_{{\hat{\alpha}}_1}, \ldots, \mu_{{\hat{\alpha}}_{\hat{\beta}}} \Big) (dx)  . \label{F lambda dual}
\end{eqnarray}
For any $(\lambda^{(1)}, \ldots, \lambda^{(q)}) \in (\tau_k)^q$, we define
\begin{equation} F_{ (\lambda^{(1)}, \ldots, \lambda^{(q)})} (t, \mu, \mu_1, \ldots, \mu_k):= \sum_{\ell=1}^q F_{ \lambda^{(\ell)}} (t, \mu, \mu_1, \ldots, \mu_k). \label{ F lambda multi dim} \end{equation}
\begin{theorem} \label{ thm mk lipschitz} 
Let $k \in \bN$. Assume \emph{\hregb{n+k-1}{k}}, where $n \geq 2$. Then \eqref{F lambda dual} is well-defined and the Cauchy problem defined by \eqref{PDE linearised k} has a unique solution in $L^{\infty} \big([0,T], ( W^{n+k-1, \infty}(\mathbb{T}^d) )' \big)$ and satisfies 
\begin{equation}\sup_{\mu,\mu_1, \ldots, \mu_k \in \cP(\bT^d)} \sup_{t \in [0,T]} \Big\| m^{(k)}(t, \mu, \mu_1, \ldots, \mu_k) \Big\|_{-(n+k-1, \infty)} < \infty. \label{ m k bound} \end{equation}
Also, if we assume \emph{\hregb{n+k}{k+1}}, then
\begin{equation} \sup_{t \in [0,T]} \Big\| m^{(k)}(t, \mu_{k+1}, \mu_1, \ldots, \mu_k) - m^{(k)}(t, \mu, \mu_1, \ldots, \mu_k) \Big\|_{-(n+k, \infty)}\leq  C   W_1(\mu,\mu_{k+1}),  \label{eq: W1 mk}  \end{equation}  for any $\mu, \mu_1, \ldots, \mu_{k+1} \in \cP(\bT^d)$, for some constant $C>0$.  
\end{theorem}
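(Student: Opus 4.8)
\emph{The plan is to argue by induction on $k$}, handling the base case $k=1$ (where $F_{\lambda_1}=0$) and the inductive step by one and the same argument: for every $k$ the Cauchy problem \eqref{PDE linearised k}, interpreted weakly as in Theorem \ref{lions paper pde result}, is exactly of the form treated there, with the order $n$ of that theorem replaced by $n+k-1$, initial datum $q_0=\mu_k-\mu$, and source $r=F_{\lambda_k}$. Thus everything reduces to two points: (i) showing that $F_{\lambda_k}$ is a well-defined element of $L^\infty([0,T],(W^{n+k-1,\infty}(\bT^d))')$ with norm bounded uniformly in $\mu,\mu_1,\dots,\mu_k$; and (ii) running a difference argument for the Lipschitz estimate. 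Throughout I would use the elementary monotonicity facts $\|\cdot\|_{a',\infty}\le\|\cdot\|_{a,\infty}$ and $\|\cdot\|_{-(a,\infty)}\le\|\cdot\|_{-(a',\infty)}$ for $a\ge a'$, from which \hregb{n+k-1}{k} entails \hregb{n'+j-1}{j} for all $1\le j\le k$, $2\le n'\le n$, and \hregb{n+k}{k+1} entails the corresponding shifted versions; this is what licenses invoking the induction hypothesis \eqref{ m k bound}--\eqref{eq: W1 mk} for the lower-order building blocks $m^{(\beta_j)},m^{(\hat\beta)}$ of $F_\lambda$, since $\beta_j,\hat\beta<k$ and $\hat n\le k$ by definition of $\tau_k$ (degenerate cases $\hat n,\beta_j$ or $\hat\beta=0$ handled with the convention $m^{(0)}:=m$, and $\hat n\ge 1$ forced by $\sum_i\beta_i+\hat\beta=k$, $\hat\beta<k$).

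\emph{For (i)} I would test \eqref{F lambda dual} against $\xi\in W^{n+k-1,\infty}$. The factor $\frac{\delta^{\hat n}b}{\delta m^{\hat n}}(\cdot,m(t,\mu))(m^{(\beta_1)},\dots,m^{(\beta_{\hat n})})$ lies in $W^{n+k-1,\infty}$ with norm $\le C\prod_j\|m^{(\beta_j)}\|_{-(n+k-1,\infty)}\le C\prod_j\|m^{(\beta_j)}\|_{-(n+\beta_j-1,\infty)}\le C$ by \hregb{n+k-1}{k} (with $\ell=\hat n$) and the induction hypothesis; multiplying by $\nabla\xi\in W^{n+k-2,\infty}$ gives a function in $W^{n+k-2,\infty}\subseteq W^{n+\hat\beta-1,\infty}$ (here $\hat\beta\le k-1$ is used), which then pairs with $m^{(\hat\beta)}(t,\mu,\dots)\in(W^{n+\hat\beta-1,\infty})'$, of uniformly bounded norm by \eqref{ m k bound}. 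Hence $\sup_t\|F_{\lambda_k}(t,\mu,\dots)\|_{-(n+k-1,\infty)}\le C$ uniformly in the $\mu$'s, and Theorem \ref{lions paper pde result} at order $n+k-1$ gives the well-posedness of \eqref{PDE linearised k} and the bound \eqref{ m k bound} at order $k$, using $\|\mu_k-\mu\|_{-(n+k-1,\infty)}\le 2$.

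\emph{For (ii)} assume in addition \hregb{n+k}{k+1} and set $\Delta^{(k)}(t):=m^{(k)}(t,\mu_{k+1},\mu_1,\dots,\mu_k)-m^{(k)}(t,\mu,\mu_1,\dots,\mu_k)$. Subtracting the two instances of \eqref{PDE linearised k} and re-centring the transport/mean-field operator at the base point $\mu$, one finds that $\Delta^{(k)}$ solves a Cauchy problem of the type in Theorem \ref{lions paper pde result} at order $n+k$, with initial datum $\mu-\mu_{k+1}$ (so $\|\Delta^{(k)}(0)\|_{-(n+k,\infty)}\le\|\mu-\mu_{k+1}\|_{-(1,\infty)}\le W_1(\mu,\mu_{k+1})$ by Kantorovich--Rubinstein) and a source $R$ consisting of: (a) the drift discrepancy $\text{div}((b(\cdot,m(t,\mu_{k+1}))-b(\cdot,m(t,\mu)))m^{(k)}(t,\mu_{k+1},\dots))$, (b) the discrepancy of $\frac{\delta b}{\delta m}$ in its measure argument, (c) the prefactor discrepancy coming from $m(t,\mu_{k+1})-m(t,\mu)$, and (d) $F_{\lambda_k}(t,\mu_{k+1},\dots)-F_{\lambda_k}(t,\mu,\dots)$. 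Terms (a)--(c) are handled precisely as in Lemma \ref{verify regularity of c}: one writes the differences of $b$ and of $\frac{\delta b}{\delta m}$ through one extra linear functional derivative (only orders $\le 2$ enter here), uses $\|m(t,\mu_{k+1})-m(t,\mu)\|_{-(1,\infty)}\le W_1(m(t,\mu_{k+1}),m(t,\mu))\le CW_1(\mu,\mu_{k+1})$ from \eqref{eq: bound buckdahn}, together with \hregb{n+k}{k+1} and \eqref{ m k bound} (applied with $n$ shifted to $n+1$ whenever a $(W^{n+k,\infty})'$-bound on $m^{(k)}$ is required), to get $\sup_t\|(\text{a})+(\text{b})+(\text{c})\|_{-(n+k,\infty)}\le CW_1(\mu,\mu_{k+1})$. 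The remaining point, and the real obstacle, is term (d).

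\emph{The main difficulty is the estimate of} $F_{\lambda_k}(t,\mu_{k+1},\dots)-F_{\lambda_k}(t,\mu,\dots)$, where the combinatorial structure of $\tau_k$ is used essentially. For each $\lambda\in e(\lambda_k)$ I would expand $F_\lambda(t,\mu_{k+1},\dots)-F_\lambda(t,\mu,\dots)$ as a telescoping sum, changing one ingredient at a time: the measure argument $m(t,\mu)$ of $\frac{\delta^{\hat n}b}{\delta m^{\hat n}}$ into $m(t,\mu_{k+1})$ (the resulting difference is rewritten through $\frac{\delta^{\hat n+1}b}{\delta m^{\hat n+1}}$, legitimate since $\hat n+1\le k+1$ is covered by \hregb{n+k}{k+1}, and controlled by $\|m(t,\mu_{k+1})-m(t,\mu)\|_{-(n+k-1,\infty)}\le CW_1(\mu,\mu_{k+1})$); each $m^{(\beta_j)}(t,\mu,\dots)$ into $m^{(\beta_j)}(t,\mu_{k+1},\dots)$ (by multilinearity of $\frac{\delta^{\hat n}b}{\delta m^{\hat n}}$ in its $\hat n$ measure slots, contributing a factor $\|m^{(\beta_j)}(t,\mu_{k+1},\dots)-m^{(\beta_j)}(t,\mu,\dots)\|_{-(n+k-1,\infty)}\le CW_1(\mu,\mu_{k+1})$ supplied by \eqref{eq: W1 mk} at order $\beta_j$ with $n$ shifted to $n+k-1-\beta_j\ge 2$, available since $\beta_j+1\le k$); and finally $m^{(\hat\beta)}(t,\mu,\dots)$ into $m^{(\hat\beta)}(t,\mu_{k+1},\dots)$ as the integrating measure (pairing a function in $W^{n+k-1,\infty}\subseteq W^{n+\hat\beta,\infty}$, since $\hat\beta\le k-1$, against the difference $m^{(\hat\beta)}(t,\mu_{k+1},\dots)-m^{(\hat\beta)}(t,\mu,\dots)$, which is $\le CW_1(\mu,\mu_{k+1})$ by \eqref{eq: W1 mk} at order $\hat\beta$, or simply by \eqref{eq: bound buckdahn} when $\hat\beta=0$). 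The delicate bookkeeping point is that the spatial regularity available after one gradient has landed on the test function is always $n+k-1$, and one must check this never drops below the Sobolev index demanded of the perturbed object by the induction hypothesis --- which is exactly what the defining inequalities $\hat n\le k$, $\beta_j<k$, $\hat\beta<k$, $\sum_i\beta_i+\hat\beta=k$ of $\tau_k$ guarantee, the single extra unit of regularity in \hregb{n+k}{k+1} over \hregb{n+k-1}{k} being consumed precisely by the step $\hat n\to\hat n+1$. Summing over $\lambda\in e(\lambda_k)$ gives $\sup_t\|F_{\lambda_k}(t,\mu_{k+1},\dots)-F_{\lambda_k}(t,\mu,\dots)\|_{-(n+k,\infty)}\le CW_1(\mu,\mu_{k+1})$, and Theorem \ref{lions paper pde result} at order $n+k$ then delivers \eqref{eq: W1 mk}, closing the induction.
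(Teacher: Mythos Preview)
Your proposal is correct and follows essentially the same approach as the paper: strong induction on $k$, reduction to Theorem~\ref{lions paper pde result} at the appropriate Sobolev order, and a telescoping decomposition of $F_{\lambda_k}(t,\mu_{k+1},\dots)-F_{\lambda_k}(t,\mu,\dots)$ in which the difference in the measure argument of $\frac{\delta^{\hat n}b}{\delta m^{\hat n}}$ is rewritten through one extra linear derivative (using \hregb{n+k}{k+1}) and the remaining slots are handled by the induction hypothesis at orders $\beta_j,\hat\beta<k$. Your bookkeeping of the Sobolev indices and the role of the constraints $\hat n\le k$, $\beta_j,\hat\beta<k$, $\sum\beta_i+\hat\beta=k$ is in fact more explicit than the paper's, but the substance is the same.
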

\begin{proof}
We proceed by strong induction for \eqref{ m k bound}. The base step  follows clearly from \eqref{PDE linearised} and Theorem \ref{lions paper pde result}, since
$$ \| \mu_1 - \mu \|_{\na}  = \sup_{\| \xi \|_{\na} \leq 1} \bigg| \int_{\bT^d} \xi(x) \, (\mu_1-\mu)(dx) \bigg| \leq 2.  $$ 
Suppose that \eqref{ m k bound} holds for $\{1, \ldots, k-1\}$. Take any $\xi \in W^{{{n}}+k-1,\infty}(\bT^d)$ and $k \geq 2$.  We first show that \eqref{F lambda dual} is well-defined, i.e. $F_{\lambda} (t,\mu, \mu_1, \ldots, \mu_k)$ is indeed in $(W^{{{n}}+k-1,\infty}(\bT^d) )'$, for any $\lambda \in \tau_k$. Note that $\beta_1, \ldots, \beta_{\hat{n}},\hat{\beta} \leq k-1$, which implies by \eqref{eq: sum condition} and \eqref{eq: emptyset condition} that
\begin{eqnarray}
& & \bigg| \int_{\bT^d} \bigg[ \frac{ \delta^{{{\hat{n}}}} b}{ \delta m^{{{\hat{n}}}}}(x, m(t, \mu)) \bigg( m^{(\beta_1)} \Big(t, \mu, \mu_{\alpha_{1,1}}, \ldots, \mu_{\alpha_{1,\beta_1}} \Big), \ldots, \nonumber \\
& & m^{(\beta_{{\hat{n}}})} \Big(t, \mu, \mu_{\alpha_{{\hat{n}},1}}, \ldots, \mu_{\alpha_{{\hat{n}},\beta_{\hat{n}}}} \Big) \bigg) \cdot \nabla \xi(x) \bigg] \quad \, m^{({\hat{\beta}})}  \Big(t, \mu, \mu_{{\hat{\alpha}}_1}, \ldots, \mu_{{\hat{\alpha}}_{\hat{\beta}}} \Big) (dx) \bigg| \nonumber  \\
& \leq & C \bigg\|  \frac{ \delta^{{{\hat{n}}}} b}{ \delta m^{{{\hat{n}}}}}(\cdot, m(t, \mu)) \bigg( m^{(\beta_1)} \Big(t, \mu, \mu_{\alpha_{1,1}}, \ldots, \mu_{\alpha_{1,\beta_1}} \Big), \ldots, \nonumber \\
& & m^{(\beta_{{\hat{n}}})} \Big(t, \mu, \mu_{\alpha_{{\hat{n}},1}}, \ldots, \mu_{\alpha_{{\hat{n}},\beta_{\hat{n}}}} \Big) \bigg) \cdot \nabla \xi(\cdot) \bigg\|_{n+(k-1)-1,\infty} \quad \bigg\|   m^{({\hat{\beta}})}  \Big(t, \mu, \mu_{{\hat{\alpha}}_1}, \ldots, \mu_{{\hat{\alpha}}_{\hat{\beta}}} \Big) \bigg\|_{-(n+\hat{\beta}-1, \infty)} \nonumber \\
& \leq & C \| \xi \|_{n+k-1,\infty}, \label{ bound forward well defined} \end{eqnarray}
where the final step follows from {\hregb{n+k-1}{k}}. Therefore, the first statement that the Cauchy problem has a unique solution in $L^{\infty} \big([0,T], ( W^{n+k-1, \infty}(\mathbb{T}^d) )' \big)$ and \eqref{ m k bound} both follow directly from Theorem \ref{lions paper pde result}, by the assumption of \hregb{n+k-1}{k} and the fact that $F_{\lambda} (\cdot,\mu, \mu_1, \ldots, \mu_k)$ is in $L^{\infty}([0,T], (W^{{{n}}+k-1,\infty}(\bT^d) )')$.

It remains to prove \eqref{eq: W1 mk} under the stronger assumption  {\hregb{n+k}{k+1}}. Let ${\xi} \in W^{n+k,\infty}(\bT^d) $. Again, we proceed by strong induction. The base step is omitted as it is a special case of the procedure of the induction step. Suppose that \eqref{eq: W1 mk} holds for $\{1, \ldots, k\}$. Replacing $\mu$ by $\mu_{k+1}$ in \eqref{PDE linearised k}, we have 
\begin{eqnarray}
&&  \int_{\bT^d} \xi(y)  \, m^{(k)}(t,\mu_{k+1},\mu_1, \ldots, \mu_k ) (dy) - \int_{\bT^d} \xi(y)  \, m^{(k)}(0,\mu_{k+1},\mu_1, \ldots, \mu_k ) (dy)  \nonumber \\
& = & \int_0^t \int_{\bT^d} \Delta \xi(y)  \, m^{(k)}(s,\mu_{k+1},\mu_1, \ldots, \mu_k )(dy)  \,ds  \nonumber \\
&& +\int_0^t \int_{\bT^d} \Big[ b \big( y, m(s, \mu_{k+1}) \big) \cdot \nabla \xi(y) \Big]  \,m^{(k)}(s,\mu_{k+1},\mu_1, \ldots, \mu_k ) (dy)  \,ds \nonumber \\
&& + \int_0^t \int_{\bT^d}  \int_{\bT^d} \bigg[ \frac{\delta b}{\delta m} \big( x, m(s,\mu_{k+1}) \big)(y) \cdot \nabla \xi(x) \bigg] \, \big( m(s, \mu_{k+1}) \big) (dx) \,  m^{(k)}(s,\mu_{k+1},\mu_1, \ldots, \mu_k ) (dy)    \,ds \nonumber \\
&& + \int_0^t {\inp[\Big]{\xi} {F_{\lambda_k} (s,\mu_{k+1}, \mu_1, \ldots, \mu_k) }}_{n+k-1, \infty} \,ds \nonumber \\
& = & {\int_0^t \int_{\bT^d} \Delta \xi(y)  \, m^{(k)}(s,\mu_{k+1},\mu_1, \ldots, \mu_k ) (dy)  \,ds  } \nonumber \\
&& {+ \int_0^t \int_{\bT^d} \Big[ b \big( y, m(s, \mu) \big) \cdot \nabla \xi(y) \Big]  \,m^{(k)}(s,\mu_{k+1},\mu_1, \ldots, \mu_k ) (dy)  \,ds  }\nonumber \\
&& + \int_0^t \int_{\bT^d}  \int_{\bT^d} \bigg[ \frac{\delta b}{\delta m} \big( x, m(s,\mu) \big)(y) \cdot \nabla \xi(x) \bigg] \, \big( m(s, \mu) \big) (dx)  \,  m^{(k)}(s,\mu_{k+1},\mu_1, \ldots, \mu_k ) (dy)   \,ds \nonumber \\
&& + \int_0^t \int_{\bT^d} \Big[ \Big( b \big( y, m(s, \mu_{k+1}) \big) - b \big( y, m(s, \mu) \big) \Big) \cdot \nabla \xi(y) \Big]  \,m^{(k)}(s,\mu_{k+1},\mu_1, \ldots, \mu_k ) (dy)  \,ds \nonumber \\
&& + \int_0^t \int_{\bT^d}  \int_{\bT^d} \bigg[ \bigg(  \frac{\delta b}{\delta m} \big( x, m(s,\mu_{k+1}) \big)(y) \nonumber \\
&& - \frac{\delta b}{\delta m} \big( x, m(s,\mu) \big)(y) \bigg) \cdot \nabla \xi(x) \bigg] \, \big( m(s, \mu_{k+1}) \big) (dx) \,  m^{(k)}(s,\mu_{k+1},\mu_1, \ldots, \mu_k ) (dy)    \,ds \nonumber \\
&& + \int_0^t \int_{\bT^d}  \int_{\bT^d} \bigg[ \frac{\delta b}{\delta m} \big( x, m(s,\mu) \big)(y) \cdot \nabla \xi(x) \bigg] \, \big( m(s, \mu_{k+1}) - m(s, \mu) \big) (dx)  \,  m^{(k)}(s,\mu_{k+1},\mu_1, \ldots, \mu_k ) (dy)   \,ds \nonumber  \\
&& + \int_0^t {\inp[\Big]{\xi} {F_{\lambda_k} (s,\mu_{k+1}, \mu_1, \ldots, \mu_k) }}_{n+k-1, \infty} \,ds. \label{eq: mk mu k+1}
\end{eqnarray}
On the other hand, we have
\begin{eqnarray}
&&  \int_{\bT^d} \xi(y)  \, m^{(k)}(t,\mu,\mu_1, \ldots, \mu_k ) (dy) - \int_{\bT^d} \xi(y)  \, m^{(k)}(0,\mu,\mu_1, \ldots, \mu_k ) (dy)  \nonumber \\
& = & \int_0^t \int_{\bT^d} \Delta \xi(y)  \, m^{(1)}(s, \mu,\mu_1) (dy)  \,ds  \nonumber \\
&& +\int_0^t \int_{\bT^d} \Big[ b \big( y, m(s, \mu) \big) \cdot \nabla \xi(y) \Big]  \,m^{(k)}(s,\mu,\mu_1, \ldots, \mu_k ) (dy)  \,ds \nonumber \\
&& + \int_0^t \int_{\bT^d}  \int_{\bT^d} \bigg[ \frac{\delta b}{\delta m} \big( x, m(s,\mu) \big)(y) \cdot \nabla \xi(x) \bigg] \, \big( m(s, \mu) \big) (dx) \,  m^{(k)}(s,\mu,\mu_1, \ldots, \mu_k ) (dy)    \,ds \nonumber \\
&& + \int_0^t {\inp[\Big]{\xi} {F_{\lambda_k} (s,\mu, \mu_1, \ldots, \mu_k) }}_{n+k-1, \infty} \, ds. \label{eq: mk mu}
\end{eqnarray}
Next, we compute that
\begin{eqnarray}
&&  \, {\inp[\Big]{\xi} {F_{\lambda_k} (s,\mu_{k+1}, \mu_1, \ldots, \mu_k) }}_{n+k-1, \infty} -  \, {\inp[\Big]{\xi} {F_{\lambda_k} (s,\mu, \mu_1, \ldots, \mu_k) }}_{n+k-1, \infty} \nonumber \\
&=& \sum_{\lambda \in e(\lambda_k)} \Bigg[  \int_{\bT^d} \bigg[ \frac{ \delta^{{{\hat{n}}}} b}{ \delta m^{{{\hat{n}}}}}(x, m(s, \mu_{k+1})) \bigg( m^{(\beta_1)} \Big(s, \mu_{k+1}, \mu_{\alpha_{1,1}}, \ldots, \mu_{\alpha_{1,\beta_1}} \Big), \ldots, \nonumber \\
& & m^{(\beta_{{\hat{n}}})} \Big(s, \mu_{k+1}, \mu_{\alpha_{{\hat{n}},1}}, \ldots, \mu_{\alpha_{{\hat{n}},\beta_{\hat{n}}}} \Big) \bigg) \cdot \nabla \xi(x) \bigg] \quad \, m^{({\hat{\beta}})}  \Big(s, \mu_{k+1}, \mu_{{\hat{\alpha}}_1}, \ldots, \mu_{{\hat{\alpha}}_{\hat{\beta}}} \Big) (dx)  \nonumber \\
&& -\int_{\bT^d} \bigg[ \frac{ \delta^{{{\hat{n}}}} b}{ \delta m^{{{\hat{n}}}}}(x, m(s, \mu)) \bigg( m^{(\beta_1)} \Big(s, \mu, \mu_{\alpha_{1,1}}, \ldots, \mu_{\alpha_{1,\beta_1}} \Big), \ldots, \nonumber \\
& & m^{(\beta_{{\hat{n}}})} \Big(s, \mu, \mu_{\alpha_{{\hat{n}},1}}, \ldots, \mu_{\alpha_{{\hat{n}},\beta_{\hat{n}}}} \Big) \bigg) \cdot \nabla \xi(x) \bigg] \quad \, m^{({\hat{\beta}})}  \Big(s, \mu, \mu_{{\hat{\alpha}}_1}, \ldots, \mu_{{\hat{\alpha}}_{\hat{\beta}}} \Big) (dx) \Bigg] \nonumber  \\
& = & \sum_{\lambda \in e(\lambda_k)} \Bigg[ \int_{\bT^d} \bigg[ \bigg( \frac{ \delta^{{{\hat{n}}}} b}{ \delta m^{{{\hat{n}}}}}(x, m(s, \mu_{k+1})) - \frac{ \delta^{{{\hat{n}}}} b}{ \delta m^{{{\hat{n}}}}}(x, m(s, \mu)) \bigg) \bigg( m^{(\beta_1)} \Big(s, \mu, \mu_{\alpha_{1,1}}, \ldots, \mu_{\alpha_{1,\beta_1}} \Big), \ldots, \nonumber \\
& & m^{(\beta_{{\hat{n}}})} \Big(s, \mu, \mu_{\alpha_{{\hat{n}},1}}, \ldots, \mu_{\alpha_{{\hat{n}},\beta_{\hat{n}}}} \Big) \bigg) \cdot \nabla \xi(x) \bigg] \quad \, m^{({\hat{\beta}})}  \Big(s, \mu, \mu_{{\hat{\alpha}}_1}, \ldots, \mu_{{\hat{\alpha}}_{\hat{\beta}}} \Big) (dx) \nonumber \\
&& + \sum_{\ell=1}^{\hat{n}}  \int_{\bT^d} \bigg[ \frac{ \delta^{{{\hat{n}}}} b}{ \delta m^{{{\hat{n}}}}}(x, m(s, \mu_{k+1})) \bigg( m^{(\beta_1)} \Big(s, \mu_{k+1}, \mu_{\alpha_{1,1}}, \ldots, \mu_{\alpha_{1,\beta_1}} \Big), \ldots, \nonumber \\
&& m^{(\beta_{\ell-1})} \Big(s, \mu_{k+1}, \mu_{\alpha_{\ell-1,1}}, \ldots, \mu_{\alpha_{\ell-1,\beta_{\ell-1}}} \Big), \nonumber \\
&& m^{(\beta_{\ell})} \Big(s, \mu_{k+1}, \mu_{\alpha_{\ell,1}}, \ldots, \mu_{\alpha_{\ell,\beta_{\ell}}}\Big)  - m^{(\beta_{\ell})} \Big(s, \mu, \mu_{\alpha_{\ell,1}}, \ldots, \mu_{\alpha_{\ell,\beta_{\ell}}} \Big), m^{(\beta_{\ell+1})} \Big(s, \mu, \mu_{\alpha_{\ell+1,1}}, \ldots, \mu_{\alpha_{\ell+1,\beta_{\ell+1}}} \Big), \nonumber \\
&& \ldots, m^{(\beta_{{\hat{n}}})} \Big(s, \mu, \mu_{\alpha_{{\hat{n}},1}}, \ldots, \mu_{\alpha_{{\hat{n}},\beta_{{\hat{n}}}}} \Big) \bigg) \cdot \nabla \xi(x) \bigg] \quad \, m^{({\hat{\beta}})}  \Big(s, \mu, \mu_{{\hat{\alpha}}_1}, \ldots, \mu_{{\hat{\alpha}}_{\hat{\beta}}} \Big) (dx)  \nonumber \\
& & +  \int_{\bT^d} \bigg[ \frac{ \delta^{{{\hat{n}}}} b}{ \delta m^{{{\hat{n}}}}}(x, m(s, \mu_{k+1})) \bigg( m^{(\beta_1)} \Big(s, \mu_{k+1}, \mu_{\alpha_{1,1}}, \ldots, \mu_{\alpha_{1,\beta_1}} \Big), \ldots, m^{(\beta_{{\hat{n}}})} \Big(s, \mu_{k+1}, \mu_{\alpha_{{\hat{n}},1}}, \ldots, \mu_{\alpha_{{\hat{n}},\beta_{\hat{n}}}} \Big) \bigg) \nonumber \\
&& \cdot \nabla \xi(x) \bigg] \quad \, \bigg( m^{({\hat{\beta}})}  \Big(s, \mu_{k+1}, \mu_{{\hat{\alpha}}_1}, \ldots, \mu_{{\hat{\alpha}}_{\hat{\beta}}} \Big) - m^{({\hat{\beta}})}  \Big(s, \mu, \mu_{{\hat{\alpha}}_1}, \ldots, \mu_{{\hat{\alpha}}_{\hat{\beta}}} \Big) \bigg) (dx) \Bigg]. \label{F lambda k diff}
\end{eqnarray}
Note that the first term in \eqref{F lambda k diff} can be rewritten as 
\begin{eqnarray}
&& \int_{\bT^d} \bigg[ \bigg( \frac{ \delta^{{{\hat{n}}}} b}{ \delta m^{{{\hat{n}}}}}(x, m(s, \mu_{k+1})) - \frac{ \delta^{{{\hat{n}}}} b}{ \delta m^{{{\hat{n}}}}}(x, m(s, \mu)) \bigg) \bigg( m^{(\beta_1)} \Big(s, \mu, \mu_{\alpha_{1,1}}, \ldots, \mu_{\alpha_{1,\beta_1}} \Big), \ldots, \nonumber \\
& & m^{(\beta_{{\hat{n}}})} \Big(s, \mu, \mu_{\alpha_{{\hat{n}},1}}, \ldots, \mu_{\alpha_{{\hat{n}},\beta_{\hat{n}}}} \Big) \bigg) \cdot \nabla \xi(x) \bigg] \quad \, m^{({\hat{\beta}})}  \Big(s, \mu, \mu_{{\hat{\alpha}}_1}, \ldots, \mu_{{\hat{\alpha}}_{\hat{\beta}}} \Big) (dx) \nonumber \\
& = & \int_{\bT^d} \int_0^1 \bigg[ \bigg( \frac{ \delta^{{{\hat{n}+1}}} b}{ \delta m^{{{\hat{n}+1}}}}(x, u m(s, \mu_{k+1}))  + (1-u) m(s, \mu)) \bigg) \bigg( m^{(\beta_1)} \Big(s, \mu, \mu_{\alpha_{1,1}}, \ldots, \mu_{\alpha_{1,\beta_1}} \Big), \ldots, \nonumber \\
& & m^{(\beta_{{\hat{n}}})} \Big(s, \mu, \mu_{\alpha_{{\hat{n}},1}}, \ldots, \mu_{\alpha_{{\hat{n}},\beta_{\hat{n}}}} \Big),  m(s, \mu_{k+1})  - m(s, \mu)  \bigg) \cdot \nabla \xi(x) \bigg] \,  \, du \, \,  m^{({\hat{\beta}})}  \Big(s, \mu, \mu_{{\hat{\alpha}}_1}, \ldots, \mu_{{\hat{\alpha}}_{\hat{\beta}}} \Big) (dx), \nonumber \\
&& \label{eq: linear derivative higher order trick}
\end{eqnarray}
by which we can estimate by the assumption {\hregb{n+k}{k+1}}. For every $\lambda \in \tau_k$, we know that $\beta_1, \ldots, \beta_n, \hat{\beta} < k$ by definition. For $i \in \{1, \ldots, \hat{n} \}$ and $\hat{\mu} \in \{ \mu, \mu_{k+1} \},$
\begin{equation} 
\Big\| m^{(\beta_i)} \Big(s, \hat{\mu}, \mu_{\alpha_{i,1}}, \ldots, \mu_{\alpha_{i,\beta_i}} \Big)  \Big\|_{-(n+k-1,\infty)}
\leq  C \Big\| m^{(\beta_i)} \Big(s, \hat{\mu}, \mu_{\alpha_{i,1}}, \ldots, \mu_{\alpha_{i,\beta_i}} \Big)  \Big\|_{-(n+\beta_i-1,\infty)} \leq C. \label{ m beta i bound} 
\end{equation}
By the induction hypothesis,  for every $\beta_{\ell} < k$,
\begin{eqnarray}
&& \Big\| m^{(\beta_{\ell})} \Big(s, \mu_{k+1}, \mu_{\alpha_{\ell,1}}, \ldots, \mu_{\alpha_{\ell,\beta_{\ell}}}\Big)  - m^{(\beta_{\ell})} \Big(s, \mu, \mu_{\alpha_{\ell,1}}, \ldots, \mu_{\alpha_{\ell,\beta_{\ell}}} \Big) \Big\|_{-(n+k-1,\infty)} \nonumber \\
& \leq & C \Big\| m^{(\beta_{\ell})} \Big(s, \mu_{k+1}, \mu_{\alpha_{\ell,1}}, \ldots, \mu_{\alpha_{\ell,\beta_{\ell}}}\Big)  - m^{(\beta_{\ell})} \Big(s, \mu, \mu_{\alpha_{\ell,1}}, \ldots, \mu_{\alpha_{\ell,\beta_{\ell}}} \Big) \Big\|_{-(n+\beta_{\ell},\infty)} \nonumber \\
& \leq & C W_1(\mu,\mu_{k+1}). \label{ind hypo diff 1} 
\end{eqnarray}
Similarly, by the induction hypothesis, for $\hat{\beta} < k$, 
\begin{eqnarray}
&& \Big\| m^{({\hat{\beta}})}  \Big(s, \mu_{k+1}, \mu_{{\hat{\alpha}}_1}, \ldots, \mu_{{\hat{\alpha}}_{\hat{\beta}}} \Big) - m^{({\hat{\beta}})}  \Big(s, \mu, \mu_{{\hat{\alpha}}_1}, \ldots, \mu_{{\hat{\alpha}}_{\hat{\beta}}} \Big) \Big\|_{-(n+k-1,\infty)} \nonumber \\
& \leq & C  W_1(\mu,\mu_{k+1}). \label{ind hypo diff 2} 
\end{eqnarray}
Hence, by \eqref{F lambda k diff}, \eqref{eq: linear derivative higher order trick}, \eqref{ m beta i bound}, \eqref{ind hypo diff 1},   \eqref{ind hypo diff 2} and the assumption of {\hregb{n+k}{k+1}}, we obtain that 
\begin{equation} \sup_{s \in [0,T]} \bigg|  {\inp[\Big]{\xi} {F_{\lambda_k} (s,\mu_{k+1}, \mu_1, \ldots, \mu_k) }}_{n+k-1,\infty} -  \, {\inp[\Big]{\xi} {F_{\lambda_k} (s,\mu, \mu_1, \ldots, \mu_k) }}_{n+k-1,\infty} \bigg|  \leq C \| \xi \|_{n+ k-1,\infty} W_1(\mu, \mu_{k+1}). \label{ind hypo W1 bound lower order terms} \end{equation}
Let \begin{equation} d^{(k+1)} (t,\mu,\mu_1, \ldots, \mu_k, \mu_{k+1} ):= m^{(k)} (t,\mu_{k+1},\mu_1, \ldots, \mu_k)- m^{(k)} (t,\mu,\mu_1, \ldots, \mu_k ) . \label{d k+1 def} \end{equation}
Subtracting \eqref{eq: mk mu k+1} by \eqref{eq: mk mu} gives
\begin{eqnarray}
&&  \int_{\bT^d} \xi(y)  \, d^{(k+1)} (t,\mu,\mu_1, \ldots, \mu_k, \mu_{k+1} ) (dy) - \int_{\bT^d} \xi(y)  \, d^{(k+1)} (0,\mu,\mu_1, \ldots, \mu_k, \mu_{k+1} )(dy)  \nonumber \\
& = & {\int_0^t \int_{\bT^d} \Delta \xi(y)  \, d^{(k+1)} (s,\mu,\mu_1, \ldots, \mu_k, \mu_{k+1} ) (dy)  \,ds  } \nonumber \\
&& {+ \int_0^t \int_{\bT^d} \Big[ b \big( y, m(s, \mu) \big) \cdot \nabla \xi(y) \Big]  \,d^{(k+1)} (s,\mu,\mu_1, \ldots, \mu_k, \mu_{k+1} )  (dy)  \,ds  }\nonumber \\
&& + \int_0^t \int_{\bT^d}  \int_{\bT^d} \bigg[ \frac{\delta b}{\delta m} \big( x, m(s,\mu) \big)(y) \cdot \nabla \xi(x) \bigg] \, \big( m(s, \mu) \big) (dx)  \,  d^{(k+1)} (s,\mu,\mu_1, \ldots, \mu_k, \mu_{k+1} )  (dy)   \,ds \nonumber \\
&& + \int_0^t \int_{\bT^d} \Big[ \Big( b \big( y, m(s, \mu_{k+1}) \big) - b \big( y, m(s, \mu) \big) \Big) \cdot \nabla \xi(y) \Big]  \,m^{(k)}(s,\mu_{k+1},\mu_1, \ldots, \mu_k ) (dy)  \,ds \nonumber \\
&& + \int_0^t \int_{\bT^d}  \int_{\bT^d} \bigg[ \bigg(  \frac{\delta b}{\delta m} \big( x, m(s,\mu_{k+1}) \big)(y) \nonumber \\
&& - \frac{\delta b}{\delta m} \big( x, m(s,\mu) \big)(y) \bigg) \cdot \nabla \xi(x) \bigg] \, \big( m(s, \mu_{k+1}) \big) (dx) \,  m^{(k)}(s,\mu_{k+1},\mu_1, \ldots, \mu_k ) (dy)    \,ds \nonumber \\
&& + \int_0^t \int_{\bT^d}  \int_{\bT^d} \bigg[ \frac{\delta b}{\delta m} \big( x, m(s,\mu) \big)(y) \cdot \nabla \xi(x) \bigg] \, \big( m(s, \mu_{k+1}) - m(s, \mu) \big) (dx)  \,  m^{(k)}(s,\mu_{k+1},\mu_1, \ldots, \mu_k ) (dy)   \,ds \nonumber  \\
&& + \int_0^t \bigg[ {\inp[\Big]{\xi} {F_{\lambda_k} (s,\mu_{k+1}, \mu_1, \ldots, \mu_k) }}_{n+k-1,\infty} \nonumber \\
&& \, \quad \quad \quad - {\inp[\Big]{\xi} {F_{\lambda_k} (s,\mu, \mu_1, \ldots, \mu_k) }}_{n+k-1,\infty} \bigg] \, ds. \nonumber \\
&& \label{dk+1 formula}
\end{eqnarray}
 Let $\eta^{(k+1)} (t,\mu,\mu_1, \ldots, \mu_k, \mu_{k+1} ) $ be an element in the dual space $(W^{n+k,\infty}(\bT^d) )'$ defined by
\begin{eqnarray}
&& \, {\inp[\Big]{{\xi}}{ \eta^{(k+1)} (t,\mu,\mu_1, \ldots, \mu_k, \mu_{k+1} ) }}_{n+k+\infty}  \nonumber \\
&:=&   \int_{\bT^d} \Big[ \Big( b \big( y, m(t, \mu_{k+1}) \big) - b \big( y, m(t, \mu) \big) \Big) \cdot \nabla {\xi}(y) \Big]  \,m^{(k)}(t,\mu_{k+1},\mu_1, \ldots, \mu_k ) (dy)   \nonumber \\
&& +  \int_{\bT^d}  \int_{\bT^d} \bigg[ \bigg(  \frac{\delta b}{\delta m} \big( x, m(t,\mu_{k+1}) \big)(y) \nonumber \\
&& - \frac{\delta b}{\delta m} \big( x, m(t,\mu) \big)(y) \bigg) \cdot \nabla {\xi}(x) \bigg] \, \big( m(t, \mu_{k+1}) \big) (dx) \,  m^{(k)}(t,\mu_{k+1},\mu_1, \ldots, \mu_k ) (dy)     \nonumber \\
&& +  \int_{\bT^d}  \int_{\bT^d} \bigg[ \frac{\delta b}{\delta m} \big( x, m(t,\mu) \big)(y) \cdot \nabla {\xi}(x) \bigg] \, \big( m(t, \mu_{k+1}) - m(t, \mu) \big) (dx)  \,  m^{(k)}(t,\mu_{k+1},\mu_1, \ldots, \mu_k ) (dy)    \nonumber  \\
&& + \, {\inp[\Big]{{\xi}} {F_{\lambda_k} (t,\mu_{k+1}, \mu_1, \ldots, \mu_k) }}_{n+k-1,\infty} - {\inp[\Big]{{\xi}} {F_{\lambda_k} (t,\mu, \mu_1, \ldots, \mu_k) }}_{n+k-1,\infty}. \nonumber
\end{eqnarray}
Clearly, by  {\hregb{n+k}{k+1}} and \eqref{ind hypo W1 bound lower order terms}, it follows from the same argument as Lemma \ref{verify regularity of c} to deduce that 
\begin{equation}
 \sup_{t \in [0,T]}  \| \eta^{(k+1)} (t,\mu,\mu_1, \ldots, \mu_k, \mu_{k+1} ) \|_{-(n+k ,\infty)} \leq  C  W_1(\mu, \mu_{k+1}). \label{ eta k+1 bound}
\end{equation}
By \eqref{dk+1 formula} (and replacing $\xi$ by arbitrary test functions $\phi \in  C^{\infty}([0,T] \times \bT^d)$) we note that $d^{(k+1)}$ satisfies the Cauchy problem 
\begin{equation}
\left\{
\begin{array}{rrl}
      {} & \partial_t d^{(k+1)} (t,\mu,\mu_1, \ldots, \mu_k, \mu_{k+1} )- \Delta d^{(k+1)} (t,\mu,\mu_1, \ldots, \mu_k, \mu_{k+1} ) \\
      & + \text{div} \big( b(\cdot,m(t, \mu)) d^{(k+1)} (t,\mu,\mu_1, \ldots, \mu_k, \mu_{k+1} )\big)  \\
      & +\text{div}\Big(m(t, \mu) \frac{\delta b}{\delta m} (\cdot,m(t, \mu)) \big(d^{(k+1)} (t,\mu,\mu_1, \ldots, \mu_k, \mu_{k+1} ) \big) \Big)  \\
      &- \eta^{(k+1)} (t,\mu,\mu_1, \ldots, \mu_k, \mu_{k+1} )   & =0, \\
      & &  \\
      {} &  d^{(k+1)} (0,\mu,\mu_1, \ldots, \mu_k, \mu_{k+1} ) &= \mu_{k+1}- \mu.\\
\end{array} 
\right.   \end{equation}
Therefore, by Theorem \ref{lions paper pde result} and \regb[n+k],
\begin{eqnarray} 
&& \sup_{t \in [0,T]} \| d^{(k+1)} (t,\mu,\mu_1, \ldots, \mu_k, \mu_{k+1} )  \|_{-(n+ k, \infty)} \nonumber \\
& \leq & C  \Big(  \| \mu_{k+1}- \mu  \|_{-(n+k, \infty)} +  \sup_{t \in [0,T]} \| \eta^{(k+1)} (t,\mu,\mu_1, \ldots, \mu_k, \mu_{k+1} )  \|_{-(n+ k,\infty)} \Big). \nonumber 
\end{eqnarray}
This completes the proof by \eqref{ eta k+1 bound}.
\end{proof}
\begin{theorem} \label{ mk mk+1 pertub} 
Let  $k \in \bN \cup \{0 \}$. Assume  \emph{\hregb{n+k+1}{k+1}} and \emph{\hlipb{n+k}{k+1}}, where $n \geq 2$. Then 
\begin{eqnarray}  
&& \sup_{t \in [0,T]} \Big\| m^{(k)}(t, \mu_{k+1}, \mu_1, \ldots, \mu_k) - m^{(k)}(t, \mu, \mu_1, \ldots, \mu_k) - m^{(k+1)}(t, \mu, \mu_1, \ldots, \mu_k, \mu_{k+1})\Big\|_{-(n+k+1, \infty)} \nonumber \\
& \leq & C  W_1(\mu, \mu_{k+1})^2,  \label{eq: linearisation formula mk}  \end{eqnarray} for any $\mu, \mu_1, \ldots, \mu_{k+1} \in \cP(\bT^d)$, for some constant $C>0$. 
\end{theorem}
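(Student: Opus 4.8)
The plan is to argue by strong induction on $k$, reducing the statement to a quadratic bound on the forcing term of a linearised forward Kolmogorov equation and then invoking Theorem~\ref{lions paper pde result}. The base case $k=0$ (with $m^{(0)}(t,\mu):=m(t,\mu)$) is, after replacing $n$ by $n+1$, precisely part (i) of Theorem~\ref{thm m m-one}, whose proof uses only the drift hypotheses; here it is covered by \hregb{n+1}{1} and \hlipb{n}{1}. For the inductive step, set $\varrho^{(k+1)}:=d^{(k+1)}-m^{(k+1)}$, where $d^{(k+1)}$ is the difference defined in \eqref{d k+1 def}. Recall from the proof of Theorem~\ref{ thm mk lipschitz} that $d^{(k+1)}$ solves the linearised forward Kolmogorov equation with initial datum $\mu_{k+1}-\mu$ and forcing $\eta^{(k+1)}$, while $m^{(k+1)}$ solves \eqref{PDE linearised k} at level $k+1$, i.e. the same equation with the same initial datum $\mu_{k+1}-\mu$ and forcing $F_{\lambda_{k+1}}$. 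Subtracting, $\varrho^{(k+1)}$ solves that equation with zero initial datum and forcing $\eta^{(k+1)}-F_{\lambda_{k+1}}$, so Theorem~\ref{lions paper pde result}, applied at regularity level $n+k+1$ (legitimate under \hregb{n+k+1}{k+1}), yields (suppressing the measure arguments $\mu,\mu_1,\ldots,\mu_{k+1}$)
\[ \sup_{t\in[0,T]}\big\|\varrho^{(k+1)}(t)\big\|_{-(n+k+1,\infty)}\le C\sup_{t\in[0,T]}\big\|\eta^{(k+1)}(t)-F_{\lambda_{k+1}}(t)\big\|_{-(n+k+1,\infty)} . \]
It therefore suffices to show that the right-hand side is $\le C\,W_1(\mu,\mu_{k+1})^2$.

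To prove this, I would pair $\eta^{(k+1)}-F_{\lambda_{k+1}}$ against an arbitrary $\xi\in W^{n+k+1,\infty}(\bT^d)$ and expand $\langle\xi,\eta^{(k+1)}\rangle_{n+k+1,\infty}$ term by term, using the explicit expression of $\eta^{(k+1)}$ from the proof of Theorem~\ref{ thm mk lipschitz}. The ingredients are: (a) the first-order-in-$\mu$ expansion $b(\cdot,m(s,\mu_{k+1}))-b(\cdot,m(s,\mu))=\frac{\delta b}{\delta m}(\cdot,m(s,\mu))\big(m^{(1)}(s,\mu,\mu_{k+1})\big)+O(W_1^2)$, obtained by combining \hlipb{n+k}{k+1} with part (i) of Theorem~\ref{thm m m-one} applied to $m(s,\mu_{k+1})-m(s,\mu)-m^{(1)}(s,\mu,\mu_{k+1})$, together with its higher-order analogue for $\frac{\delta^{\ell}b}{\delta m^{\ell}}$; (b) the Lipschitz estimate \eqref{eq: W1 mk} and the uniform bound \eqref{ m k bound}; and (c) the induction hypothesis, $m^{(\beta)}(s,\mu_{k+1},\cdot)-m^{(\beta)}(s,\mu,\cdot)-m^{(\beta+1)}(s,\mu,\cdot,\mu_{k+1})=O(W_1^2)$ for all $\beta\le k-1$, which is used to linearise the increment $F_{\lambda_k}(s,\mu_{k+1},\cdot)-F_{\lambda_k}(s,\mu,\cdot)$ after the telescoping already carried out in \eqref{F lambda k diff}--\eqref{eq: linear derivative higher order trick}. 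After these substitutions, every surviving remainder is a product of two independently $O(W_1(\mu,\mu_{k+1}))$-small factors --- a difference of two $m(s,\cdot)$'s, of two $m^{(\beta)}(s,\cdot)$'s, or of two $\frac{\delta^{\ell}b}{\delta m^{\ell}}(\cdot,m(s,\cdot))$'s, multiplied by a factor carrying $\mu_{k+1}$ as its last argument which is therefore itself $O(W_1)$ by the structure of \eqref{PDE linearised k} --- hence contributes $O(W_1^2)\,\|\xi\|_{n+k+1,\infty}$; this is where the extra spatial derivative of the assumption \hregb{n+k+1}{k+1}, relative to the \hregb{n+k}{k+1} used for \eqref{eq: W1 mk} in Theorem~\ref{ thm mk lipschitz}, is spent.

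The genuinely nontrivial part, and the main obstacle, is the combinatorial bookkeeping that identifies the collection of leading ($O(W_1)$) terms produced by this expansion with $F_{\lambda_{k+1}}(s,\mu,\mu_1,\ldots,\mu_{k+1})=\sum_{\lambda\in e(\lambda_{k+1})}F_{\lambda}$. By Definition~\ref{ def  lambda k}, $e(\lambda_{k+1})$ is the union of three distinguished multi-indices and $\bigcup_{\lambda\in e(\lambda_k)}e\big(T_k(\lambda)\big)$. The three distinguished terms must be matched with the three leading contributions coming from perturbing $\mu$ in the transport coefficient $b(\cdot,m(s,\mu))$, in $\frac{\delta b}{\delta m}(\cdot,m(s,\mu))$, and in the weight $m(s,\mu)$ --- precisely the first three lines of $\langle\xi,\eta^{(k+1)}\rangle$. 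For the remaining terms, one fixes $\lambda\in e(\lambda_k)$ and checks, using Definition~\ref{ def Tk}, that the $o(\lambda)+2$ components of $T_k(\lambda)$ correspond one-to-one, with the correct index strings $\alpha_{i,j}$ and $\hat\alpha_{\ell}$, to the $o(\lambda)+2$ leading terms generated by linearising the $\lambda$-summand of $F_{\lambda_k}(s,\mu_{k+1},\cdot)-F_{\lambda_k}(s,\mu,\cdot)$: one term from perturbing the outer argument of $\frac{\delta^{\hat n}b}{\delta m^{\hat n}}(\cdot,m(s,\mu))$ (which raises $\hat n$ by one and adjoins a new factor $m^{(1)}(s,\mu,\mu_{k+1})$ --- the first component of $T_k(\lambda)$), one from perturbing each inner factor $m^{(\beta_p)}(s,\mu,\cdot)$ (which raises $\beta_p$ by one and inserts the index $k+1$ --- the middle components of $T_k(\lambda)$), and one from perturbing the outer weight $m^{(\hat\beta)}(s,\mu,\cdot)$ (which raises $\hat\beta$ by one --- the last component of $T_k(\lambda)$). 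Summing over $\lambda\in e(\lambda_k)$ and collecting all contributions gives $\big|\langle\xi,\eta^{(k+1)}-F_{\lambda_{k+1}}\rangle_{n+k+1,\infty}\big|\le C\,W_1(\mu,\mu_{k+1})^2\,\|\xi\|_{n+k+1,\infty}$, which is the required forcing bound and closes the induction.
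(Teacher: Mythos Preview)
Your proposal is correct and follows essentially the same approach as the paper: strong induction on $k$, writing $\varrho^{(k+1)}=d^{(k+1)}-m^{(k+1)}$ as the solution of the linearised forward Kolmogorov equation with zero initial datum, applying Theorem~\ref{lions paper pde result} at regularity level $n+k+1$, and then bounding the forcing term by $C\,W_1(\mu,\mu_{k+1})^2$ via the induction hypothesis, \eqref{eq: W1 mk}, and \hlipb{n+k}{k+1}. The paper organises the forcing slightly differently, splitting it explicitly into four pieces $c_1^{(k+1)},\ldots,c_4^{(k+1)}$ (the first three coming from the drift, its derivative, and the weight $m(s,\mu)$; the fourth from $F_{\lambda_k}$), but your description of the combinatorial bookkeeping---matching the three distinguished multi-indices and the $o(\lambda)+2$ components of each $T_k(\lambda)$ to the corresponding linearised contributions---captures exactly the same structure.
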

\begin{proof}
We proceed by strong induction.  The base case is done in Theorem \ref{thm m m-one}.  Assume that the theorem holds for $\{1, \ldots, k-1 \}$, where $k \geq 2$. Then
\begin{eqnarray}  
&& \sup_{t \in [0,T]} \Big\| m^{(\ell)}(t, \mu_{\ell+1}, \mu_1, \ldots, \mu_{\ell}) - m^{(\ell)}(t, \mu, \mu_1, \ldots, \mu_{\ell}) - m^{({\ell}+1)}(t, \mu, \mu_1, \ldots, \mu_{\ell}, \mu_{{\ell}+1})\Big\|_{-(n+\ell + 1,\infty)} \nonumber \\
& \leq & C  W_1(\mu, \mu_{{\ell}+1})^2,   \quad \forall \ell \in \{1, \ldots, k-1 \}. \label{eq: linearisation formula m ell induction hypothesis}  \end{eqnarray}
Take $\xi \in W^{n+k+1, \infty}(\bT^d)$. We first recall from the definition of $\lambda_{k+1}$ (given in Definition \ref{ def  lambda k}) that  the PDE for $m^{(k+1)}$ is given by
\begin{eqnarray}
&&  \int_{\bT^d} \xi(y)  \, m^{(k+1)}(t,\mu,\mu_1,\ldots, \mu_{k+1}) (dy) - \int_{\bT^d} \xi(y)  \, m^{(k+1)}(0,\mu,\mu_1,\ldots, \mu_{k+1}) (dy)  \nonumber \\
& = & {\int_0^t \int_{\bT^d} \Delta \xi(y)  \, m^{(k+1)}(s,\mu,\mu_1,\ldots, \mu_{k+1}) (dy)  \,ds} \nonumber \\
&& {+\int_0^t \int_{\bT^d} \Big[ b \big( y, m(s, \mu) \big) \cdot \nabla \xi(y) \Big]  \,m^{(k+1)}(s,\mu,\mu_1,\ldots, \mu_{k+1})(dy)  \,ds} \nonumber \\
&&  {+ \int_0^t \int_{\bT^d}  \int_{\bT^d} \bigg[ \frac{\delta b}{\delta m} \big( x, m(s,\mu) \big)(y) \cdot \nabla \xi(x) \bigg] \, \big( m(s, \mu) \big) (dx) \,  m^{(k+1)}(s,\mu,\mu_1,\ldots, \mu_{k+1})(dy)    \,ds} \nonumber \\
&& +  \int_0^t \int_{\bT^d} \intrd \bigg[\ld[b] \big( y, m(s, \mu) \big)(z)  \cdot \nabla \xi(y) \bigg] \, m^{(1)}(s,\mu,\mu_{k+1})(dz) \, \,m^{(k)}(s,\mu,\mu_1,\ldots, \mu_{k}) (dy)  \,ds \nonumber \\
&& + \int_0^t \int_{\bT^d}  \int_{\bT^d} \int_{\bT^d} \bigg[ \frac{\delta^2 b}{\delta m^2} \big( x, m(s,\mu) \big)(y,z) \cdot \nabla \xi(x) \bigg] \nonumber \\
&& \, \,m^{(1)}(s,\mu,\mu_{k+1}) (dz) \,  \,m^{(k)}(s,\mu,\mu_1, \ldots, \mu_k) (dy)   \, \big( m(s, \mu) \big) (dx) \,ds \nonumber   \\
&& +  \int_0^t \int_{\bT^d}  \int_{\bT^d} \bigg[ \frac{\delta b}{\delta m} \big( x, m(s,\mu) \big)(y) \cdot \nabla \xi(x) \bigg] \,  m^{(k)}(s,\mu,\mu_1,\ldots, \mu_{k}) (dy)   \,  m^{(1)}(s,\mu,\mu_{k+1})  (dx) \,ds. \nonumber \\
& & + \sum_{\lambda \in e(\lambda_k)} \Bigg[ \int_0^t \int_{\bT^d} \bigg[  \frac{ \delta^{{{\hat{n}}+1}} b}{ \delta m^{{{\hat{n}}+1}}}(x, m(s, \mu)) \bigg( m^{(\beta_1)} \Big(s, \mu, \mu_{\alpha_{1,1}}, \ldots, \mu_{\alpha_{1,\beta_1}} \Big), \ldots, \nonumber \\
& & m^{(\beta_{{\hat{n}}})} \Big(s, \mu, \mu_{\alpha_{{\hat{n}},1}}, \ldots, \mu_{\alpha_{{\hat{n}},\beta_{\hat{n}}}} \Big), m^{(1)}(s, \mu, \mu_{k+1}) \bigg) \cdot \nabla \xi(x) \bigg] \quad \, m^{({\hat{\beta}})}  \Big(s, \mu, \mu_{{\hat{\alpha}}_1}, \ldots, \mu_{{\hat{\alpha}}_{\hat{\beta}}} \Big) (dx) \,ds \nonumber \\
&& +  \sum_{\ell=1}^{\hat{n}} \int_0^t \int_{\bT^d} \bigg[ \frac{ \delta^{{{\hat{n}}}} b}{ \delta m^{{{\hat{n}}}}}(x, m(s, \mu)) \bigg( m^{(\beta_1)} \Big(s, \mu, \mu_{\alpha_{1,1}}, \ldots, \mu_{\alpha_{1,\beta_1}} \Big), \ldots, m^{(\beta_{\ell-1})} \Big(s, \mu, \mu_{\alpha_{\ell-1,1}}, \ldots, \mu_{\alpha_{\ell-1,\beta_{\ell-1}}} \Big), \nonumber \\
&&  m^{(\beta_{\ell}+1)} \Big(s, \mu, \mu_{\alpha_{\ell,1}}, \ldots, \mu_{\alpha_{\ell,\beta_{\ell}}} , \mu_{k+1} \Big), m^{(\beta_{\ell+1})} \Big(s, \mu, \mu_{\alpha_{\ell+1,1}}, \ldots, \mu_{\alpha_{\ell+1,\beta_{\ell+1}}} \Big), \nonumber \\
&& \ldots, m^{(\beta_{{\hat{n}}})} \Big(s, \mu, \mu_{\alpha_{{\hat{n}},1}}, \ldots, \mu_{\alpha_{{\hat{n}},\beta_{{\hat{n}}}}} \Big) \bigg) \cdot \nabla \xi(x) \bigg] \quad \, m^{({\hat{\beta}})}  \Big(s, \mu, \mu_{{\hat{\alpha}}_1}, \ldots, \mu_{{\hat{\alpha}}_{\hat{\beta}}} \Big) (dx) \,ds \nonumber \\
& & +  \int_0^t \int_{\bT^d} \bigg[ \frac{ \delta^{{{\hat{n}}}} b}{ \delta m^{{{\hat{n}}}}}(x, m(s, \mu)) \bigg( m^{(\beta_1)} \Big(s, \mu, \mu_{\alpha_{1,1}}, \ldots, \mu_{\alpha_{1,\beta_1}} \Big), \ldots, m^{(\beta_{{\hat{n}}})} \Big(s, \mu, \mu_{\alpha_{{\hat{n}},1}}, \ldots, \mu_{\alpha_{{\hat{n}},\beta_{\hat{n}}}} \Big) \bigg) \nonumber \\
&& \cdot \nabla \xi(x) \bigg] \quad \, \bigg(  m^{({\hat{\beta}}+1)}  \Big(s, \mu, \mu_{{\hat{\alpha}}_1}, \ldots, \mu_{{\hat{\alpha}}_{\hat{\beta}}},\mu_{k+1} \Big) \bigg) (dx) \,ds \Bigg]. \label{m k+1 expression} \end{eqnarray}
Recalling the definition of $d^{(k+1)}$ in \eqref{d k+1 def}, we define
\begin{equation*} \rho^{(k+1)} (t,\mu,\mu_1, \ldots, \mu_k, \mu_{k+1} ):= d^{(k+1)} (t,\mu,\mu_1, \ldots, \mu_k, \mu_{k+1} )  - m^{(k+1)} (t,\mu,\mu_1, \ldots, \mu_k, \mu_{k+1} ). \end{equation*}
Subtracting \eqref{dk+1 formula} by \eqref{m k+1 expression} (and replacing $\xi$ by arbitrary test functions $\phi \in  C^{\infty}([0,T] \times \bT^d)$), we observe that ${\rho}^{(k+1)}$ satisfies the Cauchy problem 
\begin{equation}
\left\{
\begin{array}{rrl}
      {} & \partial_t {\rho}^{(k+1)} (t,\mu,\mu_1, \ldots, \mu_k, \mu_{k+1} )- \Delta {\rho}^{(k+1)} (t,\mu,\mu_1, \ldots, \mu_k, \mu_{k+1} ) \\
      & + \text{div} \big( b(\cdot,m(t, \mu)) {\rho}^{(k+1)} (t,\mu,\mu_1, \ldots, \mu_k, \mu_{k+1} )\big)  \\
      & + \text{div} \Big(m(t, \mu) \frac{\delta b}{\delta m} (\cdot,m(t, \mu)) \big({\rho}^{(k+1)} (t,\mu,\mu_1, \ldots, \mu_k, \mu_{k+1} ) \big) \Big) \\
      & - c^{(k+1)} (t,\mu,\mu_1, \ldots, \mu_k, \mu_{k+1} )   & =0, \\
      & &  \\
      {} & {\rho}^{(k+1)} (0,\mu,\mu_1, \ldots, \mu_k, \mu_{k+1} ) &= 0,\\
\end{array} 
\right.  \label{rho k+1  problem}  \end{equation}
where $$c^{(k+1)} (t,\mu,\mu_1, \ldots, \mu_k, \mu_{k+1} ):= \sum_{i=1}^4 c_i^{(k+1)} (t,\mu,\mu_1, \ldots, \mu_k, \mu_{k+1} ),$$
and $c_i^{(k+1)} (t,\mu,\mu_1, \ldots, \mu_k, \mu_{k+1} ), i \in \{1, \ldots, 4\}$, are elements in the dual space $(W^{n+k+1,\infty}(\bT^d) )'$ defined by
\begin{eqnarray}
&& \, {\inp[\Big]{\xi}{ c_1^{(k+1)} (t,\mu,\mu_1, \ldots, \mu_k, \mu_{k+1} ) }}_{n+k+1,\infty}  \nonumber \\
&:=&   \int_{\bT^d} \Big[ \Big( b \big( y, m(t, \mu_{k+1}) \big) - b \big( y, m(t, \mu) \big) \Big) \cdot \nabla \xi(y) \Big]  \,m^{(k)}(t,\mu_{k+1},\mu_1, \ldots, \mu_k ) (dy)   \nonumber \\
&& -\int_{\bT^d} \intrd \bigg[\ld[b] \big( y, m(t, \mu) \big)(z) \cdot \nabla \xi(y) \bigg] \, m^{(1)}(t,\mu,\mu_{k+1})(dz) \, \,m^{(k)}(t,\mu,\mu_1,\ldots, \mu_{k}) (dy)\, , \nonumber  \\
&& \nonumber \\
&& \nonumber \\
&& \, {\inp[\Big]{\xi}{ c_2^{(k+1)} (t,\mu,\mu_1, \ldots, \mu_k, \mu_{k+1} ) }}_{n+k+1,\infty}  \nonumber \\
& : = &   \int_{\bT^d}  \int_{\bT^d} \bigg[ \bigg(  \frac{\delta b}{\delta m} \big( x, m(t,\mu_{k+1}) \big)(y) \nonumber \\
&& - \frac{\delta b}{\delta m} \big( x, m(t,\mu) \big)(y) \bigg) \cdot \nabla \xi(x) \bigg] \, \big( m(t, \mu_{k+1}) \big) (dx) \,  m^{(k)}(t,\mu_{k+1},\mu_1, \ldots, \mu_k ) (dy)     \nonumber \\
&& - \int_{\bT^d}  \int_{\bT^d} \int_{\bT^d} \bigg[ \frac{\delta^2 b}{\delta m^2} \big( x, m(t,\mu) \big)(y,z) \cdot \nabla \xi(x) \bigg] \nonumber \\
&& \, \,m^{(1)}(t,\mu,\mu_{k+1}) (dz) \,  \,m^{(k)}(t,\mu,\mu_1, \ldots, \mu_k) (dy)   \, \big( m(t, \mu) \big) (dx) \, , \nonumber \\
&& \nonumber \\
&& \nonumber \\
&& \, {\inp[\Big]{\xi}{ c_3^{(k+1)} (t,\mu,\mu_1, \ldots, \mu_k, \mu_{k+1} ) }}_{n+k+1,\infty}  \nonumber \\
& : = &   \int_{\bT^d}  \int_{\bT^d} \bigg[ \frac{\delta b}{\delta m} \big( x, m(t,\mu) \big)(y) \cdot \nabla \xi(x) \bigg] \, \big( m(t, \mu_{k+1}) - m(t, \mu) \big) (dx)  \,  m^{(k)}(t,\mu_{k+1},\mu_1, \ldots, \mu_k ) (dy)    \nonumber \\
&& - \int_{\bT^d}  \int_{\bT^d} \bigg[ \frac{\delta b}{\delta m} \big( x, m(t,\mu) \big)(y) \cdot \nabla \xi(x) \bigg] \,  m^{(k)}(t,\mu,\mu_1,\ldots, \mu_{k}) (dy)   \,  m^{(1)}(t,\mu,\mu_{k+1})  (dx)\, , \nonumber  \end{eqnarray}
and, by \eqref{F lambda k diff}, 
\begin{eqnarray}
&& \, {\inp[\Big]{\xi}{ c_4^{(k+1)} (t,\mu,\mu_1, \ldots, \mu_k, \mu_{k+1} ) }}_{n+k+1,\infty}  \nonumber \\
& : = &  \sum_{\lambda \in e(\lambda_k)} \Bigg[ \int_{\bT^d} \bigg[ \bigg( \frac{ \delta^{{{\hat{n}}}} b}{ \delta m^{{{\hat{n}}}}}(x, m(t, \mu_{k+1})) - \frac{ \delta^{{{\hat{n}}}} b}{ \delta m^{{{\hat{n}}}}}(x, m(t, \mu)) \bigg) \bigg( m^{(\beta_1)} \Big(t, \mu, \mu_{\alpha_{1,1}}, \ldots, \mu_{\alpha_{1,\beta_1}} \Big), \ldots, \nonumber \\
& & m^{(\beta_{{\hat{n}}})} \Big(t, \mu, \mu_{\alpha_{{\hat{n}},1}}, \ldots, \mu_{\alpha_{{\hat{n}},\beta_{\hat{n}}}} \Big) \bigg) \cdot \nabla \xi(x) \bigg] \quad \, m^{({\hat{\beta}})}  \Big(t, \mu, \mu_{{\hat{\alpha}}_1}, \ldots, \mu_{{\hat{\alpha}}_{\hat{\beta}}} \Big) (dx) \nonumber \\
&& + \sum_{\ell=1}^{\hat{n}}  \int_{\bT^d} \bigg[ \frac{ \delta^{{{\hat{n}}}} b}{ \delta m^{{{\hat{n}}}}}(x, m(t, \mu_{k+1})) \bigg( m^{(\beta_1)} \Big(t, \mu_{k+1}, \mu_{\alpha_{1,1}}, \ldots, \mu_{\alpha_{1,\beta_1}} \Big), \ldots, \nonumber \\
&& m^{(\beta_{\ell-1})} \Big(t, \mu_{k+1}, \mu_{\alpha_{\ell-1,1}}, \ldots, \mu_{\alpha_{\ell-1,\beta_{\ell-1}}} \Big), \nonumber \\
&& m^{(\beta_{\ell})} \Big(t, \mu_{k+1}, \mu_{\alpha_{\ell,1}}, \ldots, \mu_{\alpha_{\ell,\beta_{\ell}}}\Big)  - m^{(\beta_{\ell})} \Big(t, \mu, \mu_{\alpha_{\ell,1}}, \ldots, \mu_{\alpha_{\ell,\beta_{\ell}}} \Big), m^{(\beta_{\ell+1})} \Big(t, \mu, \mu_{\alpha_{\ell+1,1}}, \ldots, \mu_{\alpha_{\ell+1,\beta_{\ell+1}}} \Big), \nonumber \\
&& \ldots, m^{(\beta_{{\hat{n}}})} \Big(t, \mu, \mu_{\alpha_{{\hat{n}},1}}, \ldots, \mu_{\alpha_{{\hat{n}},\beta_{{\hat{n}}}}} \Big) \bigg) \cdot \nabla \xi(x) \bigg] \quad \, m^{({\hat{\beta}})}  \Big(t, \mu, \mu_{{\hat{\alpha}}_1}, \ldots, \mu_{{\hat{\alpha}}_{\hat{\beta}}} \Big) (dx)  \nonumber \\
& & +  \int_{\bT^d} \bigg[ \frac{ \delta^{{{\hat{n}}}} b}{ \delta m^{{{\hat{n}}}}}(x, m(t, \mu_{k+1})) \bigg( m^{(\beta_1)} \Big(t, \mu_{k+1}, \mu_{\alpha_{1,1}}, \ldots, \mu_{\alpha_{1,\beta_1}} \Big), \ldots, m^{(\beta_{{\hat{n}}})} \Big(t, \mu_{k+1}, \mu_{\alpha_{{\hat{n}},1}}, \ldots, \mu_{\alpha_{{\hat{n}},\beta_{\hat{n}}}} \Big) \bigg) \nonumber \\
&& \cdot \nabla \xi(x) \bigg] \quad \, \bigg( m^{({\hat{\beta}})}  \Big(t, \mu_{k+1}, \mu_{{\hat{\alpha}}_1}, \ldots, \mu_{{\hat{\alpha}}_{\hat{\beta}}} \Big) - m^{({\hat{\beta}})}  \Big(t, \mu, \mu_{{\hat{\alpha}}_1}, \ldots, \mu_{{\hat{\alpha}}_{\hat{\beta}}} \Big) \bigg) (dx) \Bigg] \nonumber \\
&& - \sum_{\lambda \in e(\lambda_k)} \Bigg[ \int_{\bT^d} \bigg[  \frac{ \delta^{{{\hat{n}}+1}} b}{ \delta m^{{{\hat{n}}+1}}}(x, m(t, \mu)) \bigg( m^{(\beta_1)} \Big(t, \mu, \mu_{\alpha_{1,1}}, \ldots, \mu_{\alpha_{1,\beta_1}} \Big), \ldots, \nonumber \\
& & m^{(\beta_{{\hat{n}}})} \Big(t, \mu, \mu_{\alpha_{{\hat{n}},1}}, \ldots, \mu_{\alpha_{{\hat{n}},\beta_{\hat{n}}}} \Big), m^{(1)}(t, \mu, \mu_{k+1}) \bigg) \cdot \nabla \xi(x) \bigg] \quad \, m^{({\hat{\beta}})}  \Big(t, \mu, \mu_{{\hat{\alpha}}_1}, \ldots, \mu_{{\hat{\alpha}}_{\hat{\beta}}} \Big) (dx)  \nonumber \\
&& +  \sum_{\ell=1}^{\hat{n}}  \int_{\bT^d} \bigg[ \frac{ \delta^{{{\hat{n}}}} b}{ \delta m^{{{\hat{n}}}}}(x, m(t, \mu)) \bigg( m^{(\beta_1)} \Big(t, \mu, \mu_{\alpha_{1,1}}, \ldots, \mu_{\alpha_{1,\beta_1}} \Big), \ldots, m^{(\beta_{\ell-1})} \Big(t, \mu, \mu_{\alpha_{\ell-1,1}}, \ldots, \mu_{\alpha_{\ell-1,\beta_{\ell-1}}} \Big), \nonumber \\
&&  m^{(\beta_{\ell}+1)} \Big(t, \mu, \mu_{\alpha_{\ell,1}}, \ldots, \mu_{\alpha_{\ell,\beta_{\ell}}} , \mu_{k+1} \Big), m^{(\beta_{\ell+1})} \Big(t, \mu, \mu_{\alpha_{\ell+1,1}}, \ldots, \mu_{\alpha_{\ell+1,\beta_{\ell+1}}} \Big), \nonumber \\
&& \ldots, m^{(\beta_{{\hat{n}}})} \Big(t, \mu, \mu_{\alpha_{{\hat{n}},1}}, \ldots, \mu_{\alpha_{{\hat{n}},\beta_{{\hat{n}}}}} \Big) \bigg) \cdot \nabla \xi(x) \bigg] \quad \, m^{({\hat{\beta}})}  \Big(t, \mu, \mu_{{\hat{\alpha}}_1}, \ldots, \mu_{{\hat{\alpha}}_{\hat{\beta}}} \Big) (dx)  \nonumber \\
& & + \int_{\bT^d} \bigg[ \frac{ \delta^{{{\hat{n}}}} b}{ \delta m^{{{\hat{n}}}}}(x, m(t, \mu)) \bigg( m^{(\beta_1)} \Big(t, \mu, \mu_{\alpha_{1,1}}, \ldots, \mu_{\alpha_{1,\beta_1}} \Big), \ldots, m^{(\beta_{{\hat{n}}})} \Big(t, \mu, \mu_{\alpha_{{\hat{n}},1}}, \ldots, \mu_{\alpha_{{\hat{n}},\beta_{\hat{n}}}} \Big) \bigg) \nonumber \\
&& \cdot \nabla \xi(x) \bigg] \quad \, \bigg(  m^{({\hat{\beta}}+1)}  \Big(t, \mu, \mu_{{\hat{\alpha}}_1}, \ldots, \mu_{{\hat{\alpha}}_{\hat{\beta}}},\mu_{k+1} \Big) \bigg) (dx)  \Bigg]. \nonumber
\end{eqnarray}
Note that the term ${\inp[\Big]{\xi}{ c_1^{(k+1)} (t,\mu,\mu_1, \ldots, \mu_k, \mu_{k+1} ) }}_{n+k+1,\infty} $ can be rewritten as 
\begin{eqnarray}
&& \, {\inp[\Big]{\xi}{ c_1^{(k+1)} (t,\mu,\mu_1, \ldots, \mu_k, \mu_{k+1} ) }}_{n+k+1,\infty}  \nonumber \\
&=&   \int_{\bT^d} \Big[ \Big( b \big( y, m(t, \mu_{k+1}) \big) - b \big( y, m(t, \mu) \big) \Big) \cdot \nabla \xi(y) \Big]  \,\big( m^{(k)}(t,\mu_{k+1},\mu_1, \ldots, \mu_k ) -  m^{(k)}(t,\mu,\mu_1, \ldots, \mu_k ) \big)(dy)   \nonumber \\
&& + \int_{\bT^d} \Big[ \Big( b \big( y, m(t, \mu_{k+1}) \big) - b \big( y, m(t, \mu) \big) \Big) \cdot \nabla \xi(y) \Big]  \, m^{(k)}(t,\mu,\mu_1, \ldots, \mu_k ) (dy)  \nonumber \\
&& -\int_{\bT^d} \intrd \bigg[\ld[b] \big( y, m(t, \mu) \big)(z)  \cdot \nabla \xi(y) \bigg] \, m^{(1)}(t,\mu,\mu_{k+1})(dz) \, \,m^{(k)}(t,\mu,\mu_1,\ldots, \mu_{k}) (dy). \label{c1 k+1} 
\end{eqnarray}
By Theorem \ref{ thm mk lipschitz}, the first term of \eqref{c1 k+1} is controlled by
\begin{eqnarray} &&  \bigg| \int_{\bT^d} \Big[ \Big( b \big( y, m(t, \mu_{k+1}) \big) - b \big( y, m(t, \mu) \big) \Big) \cdot \nabla \xi(y) \Big] \nonumber \\
&& \quad \quad \,\big( m^{(k)}(t,\mu_{k+1},\mu_1, \ldots, \mu_k ) -  m^{(k)}(t,\mu,\mu_1, \ldots, \mu_k ) \big)(dy) \bigg| \nonumber \\
& \leq & C \| \xi \|_{n+k+1,\infty} W_1(\mu, \mu_{k+1})^2. \nonumber 
\end{eqnarray} 
By the same argument as \eqref{eq: linear derivative higher order trick}  and \eqref{ m beta i bound}, the second and third terms of \eqref{c1 k+1} are controlled by 
\begin{eqnarray}
&&  \bigg| \int_{\bT^d} \Big[ \Big( b \big( y, m(t, \mu_{k+1}) \big) - b \big( y, m(t, \mu) \big) \Big) \cdot \nabla \xi(y) \Big]  \, m^{(k)}(t,\mu,\mu_1, \ldots, \mu_k ) (dy)  \nonumber \\
&& -\int_{\bT^d} \intrd \bigg[\ld[b] \big( y, m(t, \mu) \big)(z)  \cdot \nabla \xi(y) \bigg] \, m^{(1)}(t,\mu,\mu_{k+1})(dz) \, \,m^{(k)}(t,\mu,\mu_1,\ldots, \mu_{k}) (dy) \bigg| \nonumber \\
& \leq & \bigg| \int_0^1  \int_{\bT^d} \bigg[ \ld[b] (y, rm(t, \mu_{k+1}) + (1-r) m(t, \mu))(m(t, \mu_{k+1}) - m(t, \mu))  \nonumber \\
&& - \ld[b] (y, m(t, \mu))(m(t, \mu_{k+1}) - m(t, \mu)) \bigg] \cdot \nabla \xi(y) \,  m^{(k)}(t,\mu,\mu_1, \ldots, \mu_k ) (dy) \,dr \bigg| \nonumber \\
&& + \bigg| \int_{\bT^d}  \bigg[\ld[b] \big( y, m(t, \mu) \big)(m(t, \mu_{k+1}) - m(t, \mu)  - m^{(1)}(t,\mu,\mu_{k+1}))  \cdot \nabla \xi(y) \bigg] \,  m^{(k)}(t,\mu,\mu_1, \ldots, \mu_k ) (dy) \bigg| \nonumber \\ 
& \leq & C \| \xi \|_{n+k+1,\infty} W_1(\mu, \mu_{k+1})^2, \nonumber 
\end{eqnarray} 
where the estimate for the first term follows from \hlipb{n+k}{k+1} with the same argument as \eqref{ct estimate second part}.
This shows that
\[ \big\| c_1^{(k+1)} (t,\mu,\mu_1, \ldots, \mu_k, \mu_{k+1} ) \big\|_{-(n+k+1,\infty)}\leq C  W_1(\mu, \mu_{k+1})^2. \nonumber 
\]
Similarly, by  \hregb{n+k}{k+1},   \hlipb{n+k}{k+1} and Theorem \ref{ thm mk lipschitz}, along with a similar argument applied to the induction hypothesis  \eqref{eq: linearisation formula m ell induction hypothesis} (as in estimates \eqref{ m beta i bound}, \eqref{ind hypo diff 1} and \eqref{ind hypo diff 2}),  we can show that, for $i \in \{2,3,4\}$, \[  \big\| c_i^{(k+1)} (t,\mu,\mu_1, \ldots, \mu_k, \mu_{k+1} ) \big\|_{-(n+k+1,\infty)}
 \leq  C  W_1(\mu, \mu_{k+1})^2 .
\]
Therefore,
\[ \big\| c^{(k+1)} (t,\mu,\mu_1, \ldots, \mu_k, \mu_{k+1} ) \big\|_{-(n+k+1,\infty)} 
 \leq  C  W_1(\mu, \mu_{k+1})^2. 
\] 
Finally, by \regb[n+k+1], \eqref{rho k+1  problem} and Theorem \ref{lions paper pde result}, we conclude that
\begin{eqnarray} && \big\| {\rho}^{(k+1)} (t,\mu,\mu_1, \ldots, \mu_k, \mu_{k+1} ) \big\|_{-(n+k+1,\infty)}\nonumber \\
& \leq &  C \big\| c^{(k+1)} (t,\mu,\mu_1, \ldots, \mu_k, \mu_{k+1} ) \big\|_{-(n+k+1,\infty)} \nonumber \\
& \leq & C  W_1(\mu, \mu_{k+1})^2. \nonumber 
\end{eqnarray}
\end{proof}
\subsection{Analysis of higher order backward Kolmogorov equations}
In this subsection, we fix $t \in [0,T]$ and consider the following Cauchy problem (defined recursively by \eqref{ G lambda}, Definition \ref{ def Tk}  and Definition \ref{ def  lambda k}):
 \begin{equation}
\left\{
\begin{array}{rrl}
      {} & \partial_s v^{(k)} (s,x, \mu, \mu_1, \ldots, \mu_k) +\Delta v^{(k)} (s,x, \mu, \mu_1, \ldots, \mu_k) \\
      & + \,  b(x, m(s, \mu)) \cdot \nabla v^{(k)} (s,x, \mu, \mu_1, \ldots, \mu_k)   &
     \\
      {} & + \, \ld[b](x, m(s,\mu))(m^{(k)}(s,\mu,\mu_1, \ldots, \mu_k)) \cdot \nabla v(s,x, \mu) + G_{\lambda_k} (s,x,\mu,\mu_1, \ldots, \mu_k) & =0, \\
      & &  \\
      {} &  v^{(k)} (t,x, \mu, \mu_1, \ldots, \mu_k) &= 0, \\
\end{array} 
\right. \label{vk linearised} \end{equation} 
where
\begin{eqnarray}
G_{\lambda_k} (s,x,\mu,\mu_1, \ldots, \mu_k) &:= &  \sum_{\lambda \in e(\lambda_k)} \bigg[  \frac{ \delta^{{{\hat{n}}}} b}{ \delta m^{{{\hat{n}}}}}(x, m(s, \mu)) \bigg( m^{(\beta_1)} \Big(s, \mu, \mu_{\alpha_{1,1}}, \ldots, \mu_{\alpha_{1,\beta_1}} \Big), \ldots, \nonumber \\
& & m^{(\beta_{{\hat{n}}})} \Big(s, \mu, \mu_{\alpha_{{\hat{n}},1}}, \ldots, \mu_{\alpha_{{\hat{n}},\beta_{\hat{n}}}} \Big) \bigg) \cdot \nabla  v^{({\hat{\beta}})}  \Big(s, x, \mu, \mu_{{\hat{\alpha}}_1}, \ldots, \mu_{{\hat{\alpha}}_{\hat{\beta}}} \Big)   \bigg]. \label{ G lambda}
\end{eqnarray}
The following theorem gives the regularity of $v^{(k)}$ by Schauder estimates. 
\begin{theorem} \label{ thm vk reg} 
Let $k \in \bN$. Assume  \emph{\hregb{ n+k-1}{k}}, where $n \geq 2$. Suppose that $\xi \in W^{n+1,\infty}$. Then the Cauchy problem $v^{(k)}$ defined by \eqref{vk linearised} has a unique solution in  $L^{\infty} \big([0,t],  W^{n+1, \infty}(\mathbb{T}^d)  \big)$. 
\end{theorem}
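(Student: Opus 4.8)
The plan is a strong induction on $k$, with all the genuine PDE analysis delegated to Theorem \ref{lions paper pde result second} (equivalently Proposition \ref{thm schauder}); the only real content is the bookkeeping of Sobolev indices. For $k=1$ the equation \eqref{vk linearised} reduces to \eqref{vone linearised} (with the convention $G_{\lambda_1}\equiv 0$), so $v^{(1)}=\vone$ and the statement is precisely Lemma \ref{representation of v one}, whose hypothesis \regb[n] is contained in \hregb{n}{1}. Assume now that the conclusion holds at every order strictly less than $k$, with the same $n$ and $\xi$. The first observation is that \hregb{n+k-1}{k} implies \hregb{n+j-1}{j} for every $j\le k$: imposing the bounds for more linear derivatives $\ell\in\{1,\dots,j\}\subseteq\{1,\dots,k\}$ and at a higher Sobolev index $n+k-1\ge n+j-1$ is a stronger requirement (recall that a larger Sobolev index yields a larger $W^{\cdot,\infty}$-norm and a smaller dual norm). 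Consequently, by the induction hypothesis every $v^{(\hat\beta)}$ with $\hat\beta<k$ lies in $L^{\infty}([0,t],W^{n+1,\infty}(\bT^d))$, and by Theorem \ref{ thm mk lipschitz} every $m^{(\beta)}$ with $\beta\le k$ satisfies the uniform bound \eqref{ m k bound} at its own index $n+\beta-1$.

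The main idea is then to recast \eqref{vk linearised} as a plain backward Kolmogorov equation by freezing the datum "$q$'' of Theorem \ref{lions paper pde result second} to $0$ and putting everything into the source term: set
$$\gamma(s,x):=\ld[b]\big(x,m(s,\mu)\big)\big(m^{(k)}(s,\mu,\mu_1,\dots,\mu_k)\big)\cdot\nabla v(s,x,\mu)+G_{\lambda_k}(s,x,\mu,\mu_1,\dots,\mu_k),$$
so that $v^{(k)}$ solves $\partial_s v^{(k)}+\Delta v^{(k)}+b(x,m(s,\mu))\cdot\nabla v^{(k)}+\gamma=0$ on $[0,t]$ with $v^{(k)}(t,\cdot)=0$. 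The crucial estimate to establish is $\gamma\in L^{\infty}\big([0,t],W^{n,\infty}(\bT^d)\big)$. For the first summand, the $\ell=1$ part of \hregb{n+k-1}{k} gives $\|\ld[b](\cdot,m(s,\mu))(m^{(k)}(s,\cdot))\|_{n+k-1,\infty}\lesssim\|m^{(k)}(s,\cdot)\|_{-(n+k-1,\infty)}$, which is uniformly bounded by Theorem \ref{ thm mk lipschitz}; multiplying by $\nabla v(s,\cdot,\mu)\in W^{n,\infty}$ (Lemma \ref{reg v lemma}, which is where $\xi\in W^{n+1,\infty}$ and \regb[n] are used) keeps us in $W^{n,\infty}$ by the Leibniz rule, since $n\le n+k-1$. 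For $G_{\lambda_k}$, each term indexed by $\lambda\in e(\lambda_k)$ is a finite product: the defining properties of $\tau_k$ force $\hat n\le k$ and $\beta_1,\dots,\beta_{\hat n},\hat\beta<k$, so \hregb{n+k-1}{k} together with the bounds on the $m^{(\beta_j)}$ (transported from index $n+\beta_j-1$ up to $n+k-1$, using again that a higher Sobolev index shrinks the dual norm) shows $\frac{\delta^{\hat n}b}{\delta m^{\hat n}}(\cdot,m(s,\mu))\big(m^{(\beta_1)},\dots,m^{(\beta_{\hat n})}\big)\in W^{n+k-1,\infty}$ uniformly, while $\nabla v^{(\hat\beta)}(s,\cdot,\mu,\dots)\in W^{n,\infty}$ by the induction hypothesis; the product lies in $W^{n,\infty}$, and summing over the finitely many $\lambda\in e(\lambda_k)$ preserves this.

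Having $\gamma\in L^{\infty}([0,t],W^{n,\infty})$ and trivially $0\in L^{\infty}([0,t],(W^{n,\infty})')$, an application of Theorem \ref{lions paper pde result second} with $q\equiv 0$ (so that the term $\ld[b](x,m(s,\mu))(q(s))\cdot\nabla v(s,x,\mu)$ vanishes) yields a unique solution $v^{(k)}\in L^{\infty}([0,t],W^{n+1,\infty}(\bT^d))$, which closes the induction. The only step requiring genuine care is precisely this index bookkeeping: one has to check that \hregb{n+k-1}{k} is exactly strong enough to control simultaneously all the lower-order objects $m^{(\beta)}$ and $v^{(\hat\beta)}$ entering $G_{\lambda_k}$ and to land $\gamma$ in $W^{0,n,\infty}$. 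No new PDE estimates beyond Proposition \ref{thm schauder} / Theorem \ref{lions paper pde result second} are needed, and uniqueness comes for free from the stochastic-representation uniqueness built into those results.
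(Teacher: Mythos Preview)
Your argument is correct and matches the paper's proof essentially line for line: strong induction on $k$, the base case handled by Lemma \ref{representation of v one}, and the induction step consisting of showing that the source term lies in $L^{\infty}([0,t],W^{n,\infty})$ via \hregb{n+k-1}{k} together with the induction hypothesis on the lower-order $v^{(\hat\beta)}$ and Theorem \ref{ thm mk lipschitz} on the $m^{(\beta)}$, before invoking Theorem \ref{lions paper pde result second}. The only cosmetic difference is that you absorb the $\ld[b](\cdot)(m^{(k)})\cdot\nabla v$ term into $\gamma$ and set $q\equiv 0$, whereas the paper keeps that term as the ``$q$'' of Theorem \ref{lions paper pde result second}; both are valid uses of that theorem and yield the same conclusion.
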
 
\begin{proof}
We proceed by strong induction. The base step is proven in Lemma \ref{representation of v one}. For the induction step, we assume that the statement is true for $1, \ldots, k-1$, where $k \geq 2$. For each $\lambda \in e(\lambda_k)$, by \hregb{ n+k-1}{k},
\begin{eqnarray}
&& \sup_{s \in [0,t]}  \bigg\| \frac{ \delta^{{{\hat{n}}}} b}{ \delta m^{{{\hat{n}}}}}(\cdot, m(s, \mu)) \bigg( m^{(\beta_1)} \Big(s, \mu, \mu_{\alpha_{1,1}}, \ldots, \mu_{\alpha_{1,\beta_1}} \Big), \ldots, \nonumber \\
& & m^{(\beta_{{\hat{n}}})} \Big(s, \mu, \mu_{\alpha_{{\hat{n}},1}}, \ldots, \mu_{\alpha_{{\hat{n}},\beta_{\hat{n}}}} \Big) \bigg) \cdot \nabla  v^{({\hat{\beta}})}  \Big(s, \cdot, \mu, \mu_{{\hat{\alpha}}_1}, \ldots, \mu_{{\hat{\alpha}}_{\hat{\beta}}} \Big) \bigg\|_{n,\infty} \nonumber \\
& \leq & C \sup_{s \in [0,t]} \bigg\| \frac{ \delta^{{{\hat{n}}}} b}{ \delta m^{{{\hat{n}}}}}(\cdot, m(s, \mu)) \bigg( m^{(\beta_1)} \Big(s, \mu, \mu_{\alpha_{1,1}}, \ldots, \mu_{\alpha_{1,\beta_1}} \Big), \ldots, m^{(\beta_{{\hat{n}}})} \Big(s, \mu, \mu_{\alpha_{{\hat{n}},1}}, \ldots, \mu_{\alpha_{{\hat{n}},\beta_{\hat{n}}}} \Big) \bigg) \bigg\|_{\na} \nonumber \\
&& \times \, \sup_{s \in [0,t]} \Big\| v^{({\hat{\beta}})}  \Big(s, \cdot, \mu, \mu_{{\hat{\alpha}}_1}, \ldots, \mu_{{\hat{\alpha}}_{\hat{\beta}}} \Big) \Big\|_{n + 1,\infty} \nonumber \\
& \leq &  C \sup_{s \in [0,t]} \bigg\| \frac{ \delta^{{{\hat{n}}}} b}{ \delta m^{{{\hat{n}}}}}(\cdot, m(s, \mu)) \bigg( m^{(\beta_1)} \Big(s, \mu, \mu_{\alpha_{1,1}}, \ldots, \mu_{\alpha_{1,\beta_1}} \Big), \ldots, m^{(\beta_{{\hat{n}}})} \Big(s, \mu, \mu_{\alpha_{{\hat{n}},1}}, \ldots, \mu_{\alpha_{{\hat{n}},\beta_{\hat{n}}}} \Big) \bigg) \bigg\|_{n+k-1,\infty} \nonumber \\
&& \times \, \sup_{s \in [0,t]} \Big\| v^{({\hat{\beta}})}  \Big(s, \cdot, \mu, \mu_{{\hat{\alpha}}_1}, \ldots, \mu_{{\hat{\alpha}}_{\hat{\beta}}} \Big) \Big\|_{n +1 ,\infty} < +\infty, \nonumber
\end{eqnarray}
which implies that $G_{\lambda_k}(\cdot,\cdot, \mu, \mu_1, \ldots, \mu_k) \in L^{\infty}( [0,t], W^{n,\infty}(\bT^d))$. This completes the induction step by repeating the same argument as in the proof of Theorem \ref{lions paper pde result second}. 
\end{proof}
The following theorem is an analogue of Theorem \ref{ mk mk+1 pertub} for backward Kolmogorov equations. The computations in the proof follow the same ideas as those in the previous subsection, i.e. Theorem \ref{ thm mk lipschitz} and Theorem \ref{ mk mk+1 pertub}. Consequently, the proof is omitted for brevity. 
\begin{theorem} \label{ thm vk vk+1} 
Let  $k \in \bN $. Assume  \emph{\hregb{n+k+1}{k+1}} and \emph{\hlipb{n+k}{k+1}}, where $n \geq 2$. Suppose that $\xi \in W^{n+1,\infty}$.  Then 
\begin{eqnarray}  
&& \sup_{s \in [0,t]} \Big\| v^{(k)}(s, \cdot ,\mu_{k+1}, \mu_1, \ldots, \mu_k) - v^{(k)}(s, \cdot ,\mu, \mu_1, \ldots, \mu_k) - v^{(k+1)}(s,\cdot , \mu, \mu_1, \ldots, \mu_k, \mu_{k+1})\Big\|_{n+1,\infty} \nonumber \\
& \leq & C W_1(\mu, \mu_{k+1})^2,  \label{eq: linearisation formula vk}  \end{eqnarray} for any $\mu, \mu_1, \ldots, \mu_{k+1} \in \cP(\bT^d)$, for some constant $C>0$. 
\end{theorem}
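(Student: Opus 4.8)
The plan is to carry over to the backward equations the same two-layer argument used for the forward ones: a first-order Lipschitz estimate for $v^{(k)}$ in the measure $\mu$ (the analogue of Theorem~\ref{ thm mk lipschitz}) followed by the genuine second-order estimate (the analogue of Theorem~\ref{ mk mk+1 pertub}). Both statements would be proved simultaneously by strong induction on $k$, the starting point being Theorem~\ref{thm derivative of v explicit form}, which is the ``$k=0$'' instance: taking $v=v^{(0)}$ with no perturbation measures it already gives $\sup_{s\in[0,t]}\|v(s,\cdot,\mu_2)-v(s,\cdot,\mu)-v^{(1)}(s,\cdot,\mu,\mu_2)\|_{n+1,\infty}\le C\,W_1(\mu,\mu_2)^2$. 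I would then assume, for all orders $1,\ldots,k-1$, both \eqref{eq: linearisation formula vk} and the Lipschitz bound of Step~1 below, and establish them at order $k$.

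\emph{Step 1.} Before linearising I would prove the backward counterpart of \eqref{eq: W1 mk},
\[
\sup_{s\in[0,t]}\|v^{(k)}(s,\cdot,\mu_{k+1},\mu_1,\ldots,\mu_k)-v^{(k)}(s,\cdot,\mu,\mu_1,\ldots,\mu_k)\|_{n+1,\infty}\le C\,W_1(\mu,\mu_{k+1}).
\]
Subtracting the two copies of \eqref{vk linearised} written at $\mu_{k+1}$ and at $\mu$, the difference solves a backward problem of the type \eqref{general backward} whose source is a finite sum of terms: $(b(\cdot,m(s,\mu_{k+1}))-b(\cdot,m(s,\mu)))\cdot\nabla v^{(k)}(\mu_{k+1})$, the difference of $\ld[b](\cdot,m(s,\cdot))(m^{(k)}(\cdot,\ldots))\cdot\nabla v(\cdot)$ between $\mu_{k+1}$ and $\mu$, and the difference $G_{\lambda_k}(s,\cdot,\mu_{k+1},\ldots)-G_{\lambda_k}(s,\cdot,\mu,\ldots)$, which splits into differences of $\frac{\delta^{\hat n}b}{\delta m^{\hat n}}$, of the $m^{(\beta)}$'s and of the $\nabla v^{(\hat\beta)}$'s with $\hat n,\beta,\hat\beta<k$. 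Each such difference is $O(W_1(\mu,\mu_{k+1}))$ by \eqref{eq: bound buckdahn}, \hlipb{n+k}{k+1}, Lemma~\ref{reg v lemma}, the bound \eqref{eq: W1 mk} for the $m^{(\beta)}$'s, and the induction hypothesis (which, combined with Theorem~\ref{ thm vk reg}, controls $\|v^{(\hat\beta)}(\mu_{k+1})-v^{(\hat\beta)}(\mu)\|_{n+1,\infty}$). Theorem~\ref{lions paper pde result second} then gives the claimed bound.

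\emph{Step 2.} Set
\[
\Gamma^{(k+1)}(s,x):=v^{(k)}(s,x,\mu_{k+1},\mu_1,\ldots,\mu_k)-v^{(k)}(s,x,\mu,\mu_1,\ldots,\mu_k)-v^{(k+1)}(s,x,\mu,\mu_1,\ldots,\mu_k,\mu_{k+1}).
\]
Subtracting from the $v^{(k)}(\mu_{k+1})-v^{(k)}(\mu)$ equation of Step~1 the equation \eqref{vk linearised} for $v^{(k+1)}$ (whose inhomogeneity is built from $\lambda_{k+1}$, i.e. from the three base multi-indices together with $T_k(\lambda_k)$, cf. Definitions~\ref{ def Tk} and~\ref{ def  lambda k}), one sees that $\Gamma^{(k+1)}$ solves a backward problem of the form \eqref{general backward} with zero terminal data and a source $\Psi^{(k+1)}$. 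Exactly as in the proof of Theorem~\ref{ mk mk+1 pertub}, I would split $\Psi^{(k+1)}=\sum_{i=1}^4\Psi^{(k+1)}_i$ into the contribution of the transport drift $b$, the contribution of $\ld[b](\cdot)(m^{(k)})\cdot\nabla v$, the mixed terms that produce $m^{(1)}(\mu,\mu_{k+1})$, and the terms matching $G_{\lambda_k}(\mu_{k+1},\ldots)-G_{\lambda_k}(\mu,\ldots)$ against the new pieces of $G_{\lambda_{k+1}}$ and against $\ld[b](\cdot)(m^{(k+1)})\cdot\nabla v$. For each $i$ I would prove $\sup_{s\in[0,t]}\|\Psi^{(k+1)}_i(s,\cdot)\|_{n,\infty}\le C\,W_1(\mu,\mu_{k+1})^2$ by the two devices used in the forward case: writing every difference $b(m(\mu_{k+1}))-b(m(\mu))$, $\frac{\delta^j b}{\delta m^j}(m(\mu_{k+1}))-\frac{\delta^j b}{\delta m^j}(m(\mu))$ or $\nabla v(\mu_{k+1})-\nabla v(\mu)$ as one power of $W_1$ times a second difference that is again $O(W_1)$ (by \hlipb{n+k}{k+1}, Lemma~\ref{reg v lemma}, \eqref{eq: W1 mk}, Step~1); and using the Taylor-expansion identity \eqref{eq: linear derivative higher order trick} together with the linearisation bounds of Theorem~\ref{thm m m-one}, Theorem~\ref{ mk mk+1 pertub} (for the $m^{(\beta)}$'s) and the induction hypothesis \eqref{eq: linearisation formula vk} (for the $v^{(\hat\beta)}$'s, $\hat\beta<k$) to turn the surviving first-order combinations into quadratic ones. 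Since $\Psi^{(k+1)}$ enters \eqref{general backward} only through the $\gamma$-slot, Theorem~\ref{lions paper pde result second} (under \regb[n+k+1]) then yields $\sup_{s\in[0,t]}\|\Gamma^{(k+1)}(s,\cdot)\|_{n+1,\infty}\le C\,W_1(\mu,\mu_{k+1})^2$, which is \eqref{eq: linearisation formula vk} at order $k$ and closes the induction.

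\emph{Main obstacle.} The PDE estimates are routine once the forward results (Theorems~\ref{thm m m-one}, \ref{ thm mk lipschitz}, \ref{ mk mk+1 pertub}) and the Schauder bound (Theorem~\ref{lions paper pde result second}) are available; the real work is the combinatorial bookkeeping of $\Psi^{(k+1)}$. One must verify that, once $\lambda_{k+1}=T_k(\lambda_k)$ plus the three base multi-indices is unfolded, the difference $G_{\lambda_k}(\mu_{k+1},\ldots)-G_{\lambda_k}(\mu,\ldots)$ telescopes term by term so that every residual cross term carries either a factor $\|m^{(\beta)}(\mu_{k+1},\ldots)-m^{(\beta)}(\mu,\ldots)-m^{(\beta+1)}(\mu,\ldots,\mu_{k+1})\|$, a factor $\|v^{(\hat\beta)}(\mu_{k+1})-v^{(\hat\beta)}(\mu)-v^{(\hat\beta+1)}(\mu,\ldots,\mu_{k+1})\|$, or a product of two independent $O(W_1)$ differences. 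Tracking which entry of $(\alpha_{i,j})$ or $(\hat\alpha_\ell)$ carries the new index $k+1$ (condition (iii) in the definition of the class $\tau_k$) is exactly what organises this telescoping, and is the only genuinely delicate point.
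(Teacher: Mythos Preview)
Your proposal is correct and follows exactly the approach the paper intends: the paper itself omits the proof, stating only that ``the computations in the proof follow the same ideas as those in the previous subsection, i.e. Theorem~\ref{ thm mk lipschitz} and Theorem~\ref{ mk mk+1 pertub}'', which is precisely the two-step (Lipschitz, then linearisation) strong-induction scheme you outline, with Theorem~\ref{lions paper pde result second} replacing Theorem~\ref{lions paper pde result} and the base case supplied by Theorem~\ref{thm derivative of v explicit form}. Your identification of the combinatorial telescoping of $G_{\lambda_k}$ against $G_{\lambda_{k+1}}$ as the only delicate point is accurate and mirrors the handling of $c_4^{(k+1)}$ in the forward proof.
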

We now establish the $k$th order linear derivative of $v$ in terms of $v^{(k)}$.
\begin{theorem} \label{ kth order derivative of v}
Let  $k \in \bN $. Assume  \emph{\hregb{n+k}{k}} and \emph{\hlipb{n+k-1}{k}}, where $n \geq 2$. Suppose that $\xi \in W^{n+1,\infty}$.  Then 
$$ v^{(k)} (0,x, \mu, \mu_1, \ldots, \mu_{k-1}, \delta_{y_k}) = \ld[v^{(k-1)}] (0,x, \mu, \mu_1, \ldots, \mu_{k-1}, y_k), $$ 
where the linear derivative $\ld[v^{(k-1)}]$ is taken with respect to $\mu$. Consequently, $\frac{ \delta^k v}{\delta m^k} (0,x, \mu, y_1, \ldots,  y_k)$ exists and is given by
\begin{equation*}  \frac{ \delta^k v}{\delta m^k} (0,x, \mu, y_1, \ldots,  y_k)=  v^{(k)} (0,x, \mu, \delta_{y_1}, \ldots, \delta_{y_k}).  \end{equation*}
\end{theorem}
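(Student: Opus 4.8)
\noindent\emph{Proof plan.} The plan is to argue by strong induction on $k$, faithfully imitating the first-order analysis of Section~\ref{ backward first order } (Theorem~\ref{thm derivative of v explicit form} and Lemma~\ref{representation of v one}). The base case $k=1$ is exactly Theorem~\ref{thm derivative of v explicit form}, which gives $\ld[v](0,x,\mu,y)=\vone(0,x,\mu,\delta_y)=v^{(1)}(0,x,\mu,\delta_y)$ and hence $\frac{\delta v}{\delta m}(0,x,\mu,y_1)=v^{(1)}(0,x,\mu,\delta_{y_1})$. For the induction step (so $k\geq 2$) I would carry, in addition to the two displayed conclusions at all levels $j\leq k-1$, the \emph{affine representations}
\begin{align*}
m^{(j)}(s,\mu,\mu_1,\ldots,\mu_{j-1},\mu_j)&=\int_{\bT^d}m^{(j)}(s,\mu,\mu_1,\ldots,\mu_{j-1},\delta_z)\,(\mu_j-\mu)(dz),\\
v^{(j)}(s,x,\mu,\mu_1,\ldots,\mu_{j-1},\mu_j)&=\int_{\bT^d}v^{(j)}(s,x,\mu,\mu_1,\ldots,\mu_{j-1},\delta_z)\,(\mu_j-\mu)(dz),
\end{align*}
each established, as at the first level, by a stability argument for the linear Cauchy problems \eqref{PDE linearised k} and \eqref{vk linearised}. (The hypotheses needed at level $j<k$ follow from \hregb{n+k}{k} and \hlipb{n+k-1}{k} by the monotonicity of the Sobolev scale.)

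The first and main task is to propagate these affine representations to level $k$. The structural point is that, by Definition~\ref{ def  lambda k} and the recurrence $T_k$ of Definition~\ref{ def Tk}, the maximal index $k$ occurs in exactly one slot of each summand of $F_{\lambda_k}$ (see \eqref{ F lambda one dim}) and of $G_{\lambda_k}$ (see \eqref{ G lambda}), and always as the \emph{last} measure argument of a factor $m^{(\beta)}$, $m^{(\hat\beta)}$ or $v^{(\hat\beta)}$ of order strictly less than $k$. Using this, I would integrate the Cauchy problem satisfied by $m^{(k)}(s,\mu,\mu_1,\ldots,\mu_{k-1},\delta_z)$ against $(\mu_k-\mu)(dz)$; the induction hypothesis then shows the resulting integral solves \eqref{PDE linearised k}, so by uniqueness (Theorem~\ref{ thm mk lipschitz}) it equals $m^{(k)}(s,\mu,\mu_1,\ldots,\mu_k)$. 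With this in hand, the same computation applied to \eqref{vk linearised} — whose coefficients now involve only $m^{(\beta)}$ of order $\leq k$ and $v^{(\hat\beta)}$ of order $<k$, all affine in their last slot — yields, via Theorem~\ref{ thm vk reg}, the affine representation for $v^{(k)}$, and in particular
$$v^{(k)}(0,x,\mu,\mu_1,\ldots,\mu_{k-1},\hmu)=\int_{\bT^d}v^{(k)}(0,x,\mu,\mu_1,\ldots,\mu_{k-1},\delta_z)\,(\hmu-\mu)(dz).$$

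Next I would apply Theorem~\ref{ thm vk vk+1} with $k$ replaced by $k-1$ (its assumptions \hregb{n+k}{k} and \hlipb{n+k-1}{k} being precisely those of the present theorem), which gives
$$\sup_{s\in[0,t]}\Big\|v^{(k-1)}(s,\cdot,\hmu,\mu_1,\ldots,\mu_{k-1})-v^{(k-1)}(s,\cdot,\mu,\mu_1,\ldots,\mu_{k-1})-v^{(k)}(s,\cdot,\mu,\mu_1,\ldots,\mu_{k-1},\hmu)\Big\|_{n+1,\infty}\leq C\,W_1(\mu,\hmu)^2.$$
Evaluating at $s=0$, substituting the affine representation obtained above, and invoking the characterisation of the linear functional derivative (Remark~5.47 of \cite{carmona2017probabilistic}, as in the proof of Theorem~\ref{thm derivative of v explicit form}) identifies $\ld[v^{(k-1)}](0,x,\mu,\mu_1,\ldots,\mu_{k-1},y_k)=v^{(k)}(0,x,\mu,\mu_1,\ldots,\mu_{k-1},\delta_{y_k})$. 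Combining this with the induction hypothesis $\frac{\delta^{k-1}v}{\delta m^{k-1}}(0,x,\mu,y_1,\ldots,y_{k-1})=v^{(k-1)}(0,x,\mu,\delta_{y_1},\ldots,\delta_{y_{k-1}})$, specialising $\mu_i=\delta_{y_i}$, and checking that the normalisation \eqref{eq: normalisation linear functional deriatives} is respected at each stage, I would obtain $\frac{\delta^{k}v}{\delta m^{k}}(0,x,\mu,y_1,\ldots,y_k)=v^{(k)}(0,x,\mu,\delta_{y_1},\ldots,\delta_{y_k})$.

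The hard part is the propagation step above: one must check carefully that the affine-in-the-last-measure-argument property is reproduced by the recursions \eqref{PDE linearised k}, \eqref{ F lambda one dim}, \eqref{vk linearised} and \eqref{ G lambda}. This amounts to a bookkeeping argument on where the index $k$ appears inside $F_{\lambda_k}$ and $G_{\lambda_k}$, but it is exactly what licenses pushing $(\mu_k-\mu)(dz)$ through each term, and it has to interlock correctly with the stability and uniqueness statements. Once it is in place, the remaining ingredients — the Schauder and duality estimates and the quadratic Taylor-type remainders — are a transcription of arguments already carried out earlier in the paper.
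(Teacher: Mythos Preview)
Your proposal is correct and follows essentially the same route as the paper: apply Theorem~\ref{ thm vk vk+1} with $k$ replaced by $k-1$ to obtain the quadratic remainder, establish the affine representation $v^{(k)}(0,x,\mu,\mu_1,\ldots,\mu_{k-1},\mu_k)=\int_{\bT^d}v^{(k)}(0,x,\mu,\mu_1,\ldots,\mu_{k-1},\delta_z)\,(\mu_k-\mu)(dz)$ by the stability argument of Lemma~\ref{representation of v one}, and conclude via the characterisation of the linear functional derivative and induction. Your write-up is in fact more explicit than the paper's about the bookkeeping needed for the affine representation (tracking where the index $k$ sits inside $F_{\lambda_k}$ and $G_{\lambda_k}$), which the paper compresses into the phrase ``a similar argument as Lemma~\ref{representation of v one}''.
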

\begin{proof}
Replacing $k$ by $k-1$ in Theorem \ref{ thm vk vk+1} gives
\begin{eqnarray}  
&& \sup_{s \in [0,t]} \Big\| v^{(k-1)}(s, \cdot ,\mu_{k}, \mu_1, \ldots, \mu_{k-1}) - v^{({k-1})}(s, \cdot ,\mu, \mu_1, \ldots, \mu_{k-1}) - v^{(k)}(s,\cdot , \mu, \mu_1, \ldots, \mu_{k-1}, \mu_{k})\Big\|_{n+1,\infty} \nonumber \\
& \leq & C W_1(\mu, \mu_{k})^2.  \nonumber  \end{eqnarray}
It follows from a similar argument as Lemma \ref{representation of v one} to show that
$$ v^{(k)} (0, x, \mu, \mu_1, \ldots, \mu_{k-1}, \mu_k) = \intrd v^{(k)} (0, x, \mu, \mu_1, \ldots, \mu_{k-1}, \delta_z) \, (\mu_k - \mu)(dz). $$
This proves the first equality. For the second equality, an inductive argument gives
\begin{eqnarray}
  v^{(k)} (0,x, \mu, \delta_{y_1}, \ldots, \delta_{y_k}) & = &    \ld[v^{(k-1)}] (0,x, \mu, \delta_{y_1}, \ldots, \delta_{y_{k-1}}, y_k) \nonumber \\
  &  = &  \frac{\delta^2 v^{(k-2)}}{\delta m^2} (0,x, \mu, \delta_{y_1}, \ldots, \delta_{y_{k-2}}, {y_{k-1}}, y_k) \nonumber \\
  & \vdots & \nonumber \\
  & = & \frac{ \delta^k v}{\delta m^k} (0,x, \mu, y_1, \ldots,  y_k). \nonumber 
\end{eqnarray}
\end{proof}
\subsection{Connection between higher order forward and backward equations} 
In this section, we follow the same approach as Section \ref{ backward first order } to show that integrals with respect to the signed measure $m^{(k)}(t, \mu, \mu_1, \ldots, \mu_k )$ can be re-expressed in terms of the signed measure $\mu_k- \mu$. 
\begin{theorem} \label{thm Ij} 
Let  $k \in \bN $. Assume  \emph{\hregb{n+k}{k}} and \emph{\hlipb{n+k-1}{k}}, where $n \geq 2$. Suppose that $\xi \in W^{ n+k,\infty}$. We define a sequence of functions $I^{(j)}(x, \mu, \mu_1, \ldots, \mu_{j-1}; \xi, t)$, $j \in \{1, \ldots, k\}$, by the following iteration:
\begin{equation}  I^{(1)}(x, \mu; \xi, t) := v(0,x, \mu; \xi ,t) + \int_{\bT^d}  \frac{\delta v}{\delta m}(0,z, \mu,x; \xi ,t)   \, \mu(dz),  \label{I1 def}  \end{equation} 
 \begin{eqnarray} I^{(j)}(x, \mu, \mu_1, \ldots, \mu_{j-1}; \xi, t) & := & -I^{(j-1)}(x, \mu,\mu_1, \ldots, \mu_{j-2}; \xi ,t) \nonumber \\
 && + \int_{\bT^d}  \frac{\delta I^{(j-1)}}{\delta m}(z, \mu,\mu_1, \ldots, \mu_{j-2}, x; \xi ,t)   \, (\mu_{j-1}- \mu) (dz),  \label{Ij iteration}  
 \end{eqnarray}
for $j \in \{2, \ldots, k \}$, where $\frac{\delta I^{(j-1)}}{\delta m}$ is taken with respect to $\mu$. Then the sequence is well-defined and $$ \intrd \xi(x) \, m^{(k)}(t, \mu, \mu_1, \ldots, \mu_k )(dx) = \intrd I^{(k)} (x, \mu,\mu_1, \ldots, \mu_{k-1}; \xi,t) \, (\mu_k - \mu)(dx). $$ 
\end{theorem}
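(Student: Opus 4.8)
The strategy is to prove the identity
$$
\intrd \xi(x)\, m^{(k)}(t,\mu,\mu_1,\ldots,\mu_k)(dx) = \intrd I^{(k)}(x,\mu,\mu_1,\ldots,\mu_{k-1};\xi,t)\,(\mu_k-\mu)(dx)
$$
by induction on $k$, mirroring exactly the first-order argument from Section \ref{ backward first order } (equations \eqref{v phi connection 1}--\eqref{v phi connection 2} and Corollary \ref{Corr existence first order}) but now iterating the perturbation. First I would settle the base case $k=1$: this is precisely \eqref{v phi connection 2}, which reads $\intrd \xi(x)\, m^{(1)}(t,\mu,\hmu)(dx) = \intrd I^{(1)}(x,\mu;\xi,t)\,(\hmu-\mu)(dx)$ with $I^{(1)}$ as defined in \eqref{I1 def}, valid by Theorem \ref{thm derivative of v explicit form} under \hregb{n+1}{1} and \hlipb[0] (which are implied by the hypotheses \hregb{n+k}{k}, \hlipb{n+k-1}{k} when $k\geq 1$).

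For the inductive step, assume the identity holds at level $k-1$, i.e.
$$
\intrd \xi(x)\, m^{(k-1)}(t,\mu,\mu_1,\ldots,\mu_{k-1})(dx) = \intrd I^{(k-1)}(x,\mu,\mu_1,\ldots,\mu_{k-2};\xi,t)\,(\mu_{k-1}-\mu)(dx),
$$
for all $\mu,\mu_1,\ldots,\mu_{k-1}$. The key move is to perturb $\mu \leadsto (1-\eps)\mu + \eps\mu_k$ in this identity (holding $\mu_1,\ldots,\mu_{k-1}$ fixed), and differentiate at $\eps = 0^+$. On the left-hand side, the object $m^{(k-1)}(t,(1-\eps)\mu+\eps\mu_k,\mu_1,\ldots,\mu_{k-1})$ is differentiable in $\eps$ with derivative $m^{(k)}(t,\mu,\mu_1,\ldots,\mu_{k-1},\mu_k)$: this is exactly what Theorem \ref{ mk mk+1 pertub} (estimate \eqref{eq: linearisation formula mk}) provides, together with the scaling relation $m^{(k)}(t,\mu,\mu_1,\ldots,\mu_{k-1},(1-\eps)\mu+\eps\mu_k) = \eps\, m^{(k)}(t,\mu,\mu_1,\ldots,\mu_{k-1},\mu_k)$ — which itself follows from the same representation-of-$m^{(k)}$ argument used in Lemma \ref{representation of v one} and invoked after \eqref{d k+1 def}. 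On the right-hand side, one differentiates the $\eps$-dependence appearing in three places: in $I^{(k-1)}$ through $\mu$, and in the signed measure $(\mu_{k-1}-(1-\eps)\mu-\eps\mu_k)(dx)$. The product/chain rule then produces precisely the two terms in the iteration \eqref{Ij iteration} defining $I^{(k)}$; here one needs $I^{(k-1)}$ to itself possess a linear functional derivative $\frac{\delta I^{(k-1)}}{\delta m}$ in $\mu$, which should be argued inductively in parallel, using the Schauder-type regularity of $v^{(k)}$ from Theorem \ref{ thm vk reg}, the derivative-of-$v$ formula of Theorem \ref{ kth order derivative of v}, and Lemma \ref{reg v lemma} / Theorem \ref{ thm vk vk+1} for the required Lipschitz-in-measure estimates that justify the differentiation.

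\textbf{Main obstacle.} The delicate point is the justification of interchanging $\frac{d}{d\eps}\big|_{0^+}$ with the spatial integrations on both sides, and of the chain rule that splits the $\eps$-derivative of the right-hand side into the two terms of \eqref{Ij iteration}. This requires (i) uniform-in-$\eps$ bounds and a quadratic remainder estimate $O(W_1(\mu,(1-\eps)\mu+\eps\mu_k)^2) = O(\eps^2 W_1(\mu,\mu_k)^2)$ for the linearisation of $m^{(k-1)}$, supplied by Theorem \ref{ mk mk+1 pertub}; and (ii) a corresponding $O(\eps^2)$ control on the remainder in the expansion of $I^{(k-1)}$, which reduces via Theorem \ref{ kth order derivative of v} and Theorem \ref{ thm vk vk+1} to the established $W_1^2$-linearisation estimates for $v^{(k-1)}$. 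One must also track carefully that the test-function regularity degrades by one order at each stage (hence the hypothesis $\xi \in W^{n+k,\infty}$ and the index $n+k$ in \hregb{n+k}{k}), so that at level $j$ the relevant objects still live in the right Sobolev / dual-Sobolev spaces for the pairings to make sense. Once these estimates are in place, the differentiation is legitimate and the two resulting terms are, by inspection, exactly $-I^{(k-1)}(x,\mu,\mu_1,\ldots,\mu_{k-2};\xi,t)$ and $\int \frac{\delta I^{(k-1)}}{\delta m}(z,\mu,\ldots,x;\xi,t)(\mu_{k-1}-\mu)(dz)$ integrated against $(\mu_k-\mu)(dx)$, i.e. $\int I^{(k)}(x,\mu,\mu_1,\ldots,\mu_{k-1};\xi,t)(\mu_k-\mu)(dx)$, completing the induction.
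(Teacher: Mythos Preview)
Your proposal is correct and follows essentially the same approach as the paper: induction on $k$ with base case \eqref{v phi connection 2}, then perturbing $\mu\leadsto(1-\eps)\mu+\eps\mu_k$ in the level-$(k-1)$ identity and differentiating at $\eps=0^+$, using Theorem \ref{ mk mk+1 pertub} for the left-hand side and the chain rule for the right-hand side to recover the two terms of \eqref{Ij iteration}. The paper's proof is terser on the well-definedness of $\frac{\delta I^{(k-1)}}{\delta m}$ (it simply cites Theorem \ref{ kth order derivative of v}), whereas you correctly identify this as requiring a parallel inductive argument, but the substance is the same.
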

\begin{proof}
By Theorem \ref{ kth order derivative of v}, the sequence $I^{(j)}$ is well-defined. To prove the equality, we proceed via an induction argument. The base step is established in \eqref{v phi connection 2}. For the inductive step, we assume that
$$ \intrd \xi(x) \, m^{(k-1)}(t, \mu, \mu_1, \ldots, \mu_{k-1} )(dx) = \intrd I^{(k-1)} (x, \mu, \mu_1, \ldots, \mu_{k-2}; \xi,t) \, (\mu_{k-1} - \mu)(dx). $$
By replacing $k$ by $k-1$ in Theorem \ref{ mk mk+1 pertub},  we have
\begin{eqnarray}  
&& \sup_{t \in [0,T]} \Big\| m^{(k-1)}(t, \mu_{k}, \mu_1, \ldots, \mu_{k-1}) - m^{(k-1)}(t, \mu, \mu_1, \ldots, \mu_{k-1}) - m^{(k)}(t, \mu, \mu_1, \ldots, \mu_{k-1}, \mu_{k})\Big\|_{-(n+k,\infty)} \nonumber \\
& \leq & C W_1(\mu, \mu_{k})^2 ,  \nonumber  \end{eqnarray} for any $\mu, \mu_1, \ldots, \mu_{k} \in \cP(\bT^d)$, for some constant $C>0$. Since $\xi \in W^{ n+k,\infty}$, it follows from the proof of Theorem \ref{thm m m-one} to observe that
\begin{equation}  \frac{d}{d \eps} \bigg|_{\eps =0^{+}} \intrd \xi(x) \, m^{(k-1)}(t, (1-\eps)\mu + \eps \mu_k, \mu_1, \ldots, \mu_{k-1} )(dx)  =  \intrd \xi(x) \, m^{(k)}(t, \mu, \mu_1, \ldots, \mu_{k-1} , \mu_k)(dx). \label{I k-1 first part} \end{equation} 
On the other hand, by the chain rule of differentiation,
\begin{eqnarray} 
&& \frac{d}{d \eps} \bigg|_{\eps =0^{+}} \intrd I^{(k-1)} (x, (1-\eps)\mu + \eps \mu_k, \mu_1, \ldots, \mu_{k-2}; \xi,t) \, \big(\mu_{k-1} - ( (1-\eps)\mu + \eps \mu_k) \big)(dx) \nonumber \\
& = & \intrd \intrd  \ld[I^{(k-1)}] (x, \mu,\mu_1, \ldots, \mu_{k-2},z; \xi,t) \, (\mu_k- \mu)(dz) \, (\mu_{k-1}- \mu)(dx) \nonumber \\
&& - \intrd I^{(k-1)} (x, \mu,\mu_1, \ldots, \mu_{k-2}; \xi,t) \, (\mu_{k} - \mu )(dx) \nonumber \\
& = & \intrd \bigg[ \intrd  \ld[I^{(k-1)}] (z, \mu,\mu_1, \ldots, \mu_{k-2},x; \xi,t) \, (\mu_{k-1}- \mu)(dz)  \nonumber \\
&& - I^{(k-1)} (x, \mu, \mu_1, \ldots, \mu_{k-2}; \xi,t) \bigg] \, (\mu_{k} - \mu )(dx) \nonumber \\
&=&  \intrd  I^{(k)} (x, \mu,\mu_1, \ldots, \mu_{k-1}; \xi,t) \, (\mu_{k} - \mu )(dx). \label{I k-1 second part} 
\end{eqnarray}
The proof is complete by combining \eqref{I k-1 first part} and \eqref{I k-1 second part}.
\end{proof}
\section{Regularity of higher order derivatives in measure of \texorpdfstring{$\cU$}{U}} 

\subsection{Definitions and notations for iteration in multi-indices in the class \texorpdfstring{$\Delta_k$}{Delta-k}}
In order to obtain a general formula for the $k$th order linear derivative of $\Phi$, we proceed with another iteration argument. Therefore, we need to introduce another class $\Delta_k$ of multi-indices.
\begin{definition}[Class $\Delta_k$ of multi-indices] \label{multi indices delta k} 
For any $k \in \bN$, the class $\Delta_k$ contains all multi-indices of the form
\begin{equation} {\Lambda}:=  \bigg({\hat{n}}, ( \beta_j )_{j=1}^{{\hat{n}}}, ( {\alpha_{i,j}} )_{\substack{1 \leq i \leq {\hat{n}} \\ 1 \leq j \leq \beta_{i}}} \bigg), \label{Lambda def} \end{equation}
where ${\hat{n}}$ and $\beta_j$  are non-negative integers and $ \alpha_{i,j}$,   $1 \leq i \leq {\hat{n}}$,  $ 1 \leq j \leq \beta_{i}$,  are positive integers satisfying
\begin{enumerate}[(i)]
\item $$ {\hat{n}} \leq k, \quad \quad 1 \leq \alpha_{i,1}< \ldots<  \alpha_{i,\beta_{i}} \leq k, \quad \quad \beta_1, \ldots, \beta_{\hat{n}} \leq k,$$
\item \begin{equation} \sum_{i=1}^{\hat{n}} \beta_i  = k, \label{eq: sum condition 2}  \end{equation}
\item for any $i,i' \in \{1, \ldots, \hat{n} \},$ \begin{equation} \Big\{ \alpha_{i,1}, \ldots, \alpha_{i, \beta_i} \Big\} \cap  \Big\{ \alpha_{i',1}, \ldots, \alpha_{i', \beta_{i'}} \Big\}  = \emptyset. \label{eq: emptyset condition 2}  \end{equation}
\end{enumerate}
In particular, $o(\Lambda)$ is called the \emph{order} of $\Lambda$ defined by
$$ o(\Lambda):={\hat{n}}.$$ 
Moreover, for any $(\Lambda^{(1)}, \ldots, \Lambda^{(q)}) \in (\Delta_k)^q$, we define the \emph{magnitude} of $(\Lambda^{(1)}, \ldots, \Lambda^{(q)})$ by
$$ m\big( (\Lambda^{(1)}, \ldots, \Lambda^{(q)}) \big) := q.$$ If $\Lambda= \Lambda^{(i)}$, for some $i \in \{1, \ldots, q \}$, we write
$$ \Lambda \in e \big( (\Lambda^{(1)}, \ldots, \Lambda^{(q)}) \big):= \{ \Lambda^{(1)}, \ldots, \Lambda^{(q)} \}.$$ 
\end{definition}
Next, we introduce the recurrence map $Q_k$ for multi-indices in $\Delta_k$, followed by the sequence of multi-dimensional vectors $\Lambda_k $ of elements in $\Delta_k$.
\begin{definition}[Recurrence map $Q_k$] \label{ def Qk} 
Let $\Lambda \in \Delta_k$ be given by the form \eqref{Lambda def}. We define a recurrence map $Q_k$ by 
\begin{eqnarray}
(\Delta_{k+1})^{o(\Lambda)+1} \ni Q_k(\Lambda) &:=& \bigg( \Big( {\hat{n}}+1, ( \beta_1, \ldots, \beta_{\hat{n}}, 1 ), ({\alpha}_{1,1}, \ldots, {\alpha}_{{\hat{n}}, \beta_{\hat{n}}},k+1)\Big), \nonumber \\
&& \Big( {\hat{n}}, ( \beta_1, \ldots, \beta_{p-1}, \beta_p +1, \beta_{p+1}, \ldots, \beta_{\hat{n}}), \nonumber \\
&& ( \alpha_{1,1}, \ldots, \alpha_{p-1, \beta_{p-1}}, \alpha_{p,1}, \ldots, \alpha_{p, \beta_p}, k+1, \alpha_{p+1,1}, \ldots, \alpha_{{\hat{n}}, \beta_{\hat{n}}} ) \Big)_{1 \leq p \leq {\hat{n}}} \bigg). \nonumber 
\end{eqnarray}
\end{definition}
\begin{definition}[Multi-dimensional vectors $\Lambda_k $ of elements in $\Delta_k$] \label{ def  Lambda k} 
We first define
\begin{eqnarray}
\Lambda_{1} & : = & \Big( 1, (1), (1) \Big) \in \Delta_1. \nonumber
\end{eqnarray}
For every $k \geq 2$,  we define a multi-dimensional vector $\Lambda_{k+1}$ of elements in $\Delta_{k+1}$ by the recurrence relation
\begin{eqnarray}
\Lambda_{k+1} & : = & \Big( Q_k(\Lambda^{(1)}_k), \ldots, Q_k(\Lambda^{(m(\Lambda_k))}_k ) \Big), \nonumber
\end{eqnarray}
for $\Lambda_{k} = \big(\Lambda^{(1)}_k, \ldots,  \Lambda^{(m(\Lambda_k))}_k \big)$.
\end{definition}
\subsection{Analysis of higher order linear derivatives of \texorpdfstring{ $\cU$}{U}}
We begin by establishing a higher-order analogue of Theorem \ref{thm m m-one}.
\begin{lemma} \label{ lemma differentiation higher order } Let  $k \in \bN \setminus \{1 \}$. Assume  \emph{\hregb{n+k}{k}}, \emph{\hlipb{n+k-1}{k}} and \\ \emph{\hregphi{n+k}{k-1}}, where $n \geq 2$. Then, for $\hat{n}, \beta \leq k-1$ and $i \in \{1, \ldots, \hat{n} \}$, 
\begin{eqnarray}
&& \frac{d}{d \eps} \bigg|_{\eps=0^{+}} \intrd \frac{\delta^{\hat{n}} \Phi}{\delta m^{\hat{n}}} (m)(y_1, \ldots, y_{\hat{n}})  \big( m^{(\beta)} (t, (1- \eps) \mu + \eps {\mu}_{k}, \mu_1, \ldots, \mu_{\beta}) \big)(dy_i) \nonumber \\
& = & \int_{\bT^d}  \frac{\delta^{\hat{n}} \Phi}{\delta m^{\hat{n}}}(m) (y_1, \ldots, y_{\hat{n}})  \, m^{(\beta+1)}(t,\mu,\m_1, \ldots, \mu_{\beta}, \mu_k) (dy_i),
\end{eqnarray} 
for every $m,\mu, \mu_1, \ldots, \mu_{\beta}, \mu_k \in \cP(\bT^d).$
\end{lemma}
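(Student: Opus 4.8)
The plan is to mimic the proof of part (iii) of Theorem \ref{thm m m-one}, now using the higher-order linearisation estimate of Theorem \ref{ mk mk+1 pertub} in place of the first-order one. Write $\mu_\eps := (1-\eps)\mu + \eps\mu_k$. We want to differentiate, in $\eps$ at $0^+$, the quantity
\[
g(\eps) := \int_{\bT^d} \frac{\delta^{\hat n}\Phi}{\delta m^{\hat n}}(m)(y_1,\ldots,y_{\hat n})\,\big(m^{(\beta)}(t,\mu_\eps,\mu_1,\ldots,\mu_\beta)\big)(dy_i),
\]
where, crucially, the function $\frac{\delta^{\hat n}\Phi}{\delta m^{\hat n}}(m)(\cdot)$ is a \emph{fixed} test function in the slot $y_i$ (all other $y_j$, $j\neq i$, are fixed parameters), and $m$ is a fixed measure that does not depend on $\eps$. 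By assumption \hregphi{n+k}{k-1} and since $\hat n \le k-1$, this test function, call it $\zeta(\cdot) := \frac{\delta^{\hat n}\Phi}{\delta m^{\hat n}}(m)(y_1,\ldots,y_{i-1},\cdot,y_{i+1},\ldots,y_{\hat n})$, belongs to $W^{n+k,\infty}(\bT^d)$, uniformly in the frozen parameters. In particular, since $\beta \le k-1$, we have $n+\beta \le n+k-1 < n+k$, so $\zeta \in W^{n+\beta,\infty}(\bT^d)$ and may legitimately be paired with elements of $(W^{n+\beta,\infty}(\bT^d))'$, and with the $\|\cdot\|_{-(n+\beta+1,\infty)}$-error from Theorem \ref{ mk mk+1 pertub}.

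The key step is the elementary difference quotient. Write
\[
\frac{1}{\eps}\big(g(\eps) - g(0)\big) = \frac{1}{\eps}\Big\langle \zeta,\ m^{(\beta)}(t,\mu_\eps,\mu_1,\ldots,\mu_\beta) - m^{(\beta)}(t,\mu,\mu_1,\ldots,\mu_\beta)\Big\rangle_{n+\beta,\infty}.
\]
Apply Theorem \ref{ mk mk+1 pertub} (with $k$ replaced by $\beta$, the "$\mu_{k+1}$" there being $\mu_\eps$, which is legitimate since $\hregb{n+\beta+1}{\beta+1}$ and $\hlipb{n+\beta}{\beta+1}$ both follow from the hypotheses \hregb{n+k}{k} and \hlipb{n+k-1}{k} because $\beta+1 \le k$ and $n+\beta+1 \le n+k$): there is $C>0$ with
\[
\Big\| m^{(\beta)}(t,\mu_\eps,\mu_1,\ldots,\mu_\beta) - m^{(\beta)}(t,\mu,\mu_1,\ldots,\mu_\beta) - m^{(\beta+1)}(t,\mu,\mu_1,\ldots,\mu_\beta,\mu_\eps)\Big\|_{-(n+\beta+1,\infty)} \le C\,W_1(\mu,\mu_\eps)^2.
\]
Then I would use two facts exactly as in the proof of Theorem \ref{thm m m-one}(iii): first, $W_1(\mu,\mu_\eps) \le \eps W_1(\mu,\mu_k)$ by \eqref{W1 W2 estimates linear interpolation}, so the error term divided by $\eps$ is $O(\eps) \to 0$; and second, the linearity of $\mu_{k+1}\mapsto m^{(\beta+1)}(t,\mu,\ldots,\mu_{k+1})$ in its last argument (the initial datum of the linear Cauchy problem \eqref{PDE linearised k} depends linearly on $\mu_{k+1}$ through $\mu_{k+1}-\mu$, and $F_{\lambda_{\beta+1}}$ depends on $\mu_{k+1}$ only via the slot-$(\beta+1)$ measures, again linearly), which gives $m^{(\beta+1)}(t,\mu,\mu_1,\ldots,\mu_\beta,\mu_\eps) = \eps\, m^{(\beta+1)}(t,\mu,\mu_1,\ldots,\mu_\beta,\mu_k)$. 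Dividing by $\eps$ and passing to the limit yields
\[
\lim_{\eps\to 0^+}\frac{1}{\eps}\big(g(\eps)-g(0)\big) = \big\langle \zeta,\ m^{(\beta+1)}(t,\mu,\mu_1,\ldots,\mu_\beta,\mu_k)\big\rangle_{n+\beta+1,\infty} = \int_{\bT^d}\frac{\delta^{\hat n}\Phi}{\delta m^{\hat n}}(m)(y_1,\ldots,y_{\hat n})\,m^{(\beta+1)}(t,\mu,\mu_1,\ldots,\mu_\beta,\mu_k)(dy_i),
\]
which is the claim.

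The main obstacle, and the only point requiring care, is the bookkeeping that the linearity claim $m^{(\beta+1)}(t,\mu,\ldots,\mu_\beta,\mu_\eps) = \eps\,m^{(\beta+1)}(t,\mu,\ldots,\mu_\beta,\mu_k)$ is genuinely correct: one must check that in the recursive definition of $\lambda_{\beta+1}$ and $F_{\lambda_{\beta+1}}$, the index $\beta+1$ (playing the role of "$k+1$" in Definition \ref{ def lambda k}) appears in exactly one measure slot in each term, so every summand is linear in $\mu_{\beta+1}$; this is precisely condition (iii) in the definition of the class $\tau_k$ ("exactly one of $\alpha_{i,j}$ and $\hat\alpha_\ell$ is equal to $k$") together with an induction showing it is preserved by $T_k$. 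Alternatively, one can avoid this by arguing directly from the Cauchy problem: $\eps^{-1}m^{(\beta+1)}(t,\mu,\ldots,\mu_\beta,\mu_\eps)$ and $m^{(\beta+1)}(t,\mu,\ldots,\mu_\beta,\mu_k)$ solve the same linear PDE with the same data (using that the lower-order inputs $m^{(\gamma)}(t,\mu,\ldots)$ with $\gamma \le \beta$ do not involve $\mu_{\beta+1}$ at all, so the source term $F_{\lambda_{\beta+1}}$ is linear in the slot-$(\beta+1)$ measure), and then invoke the uniqueness statement of Theorem \ref{ thm mk lipschitz}. Everything else — membership of $\zeta$ in the right Sobolev space, admissibility of the pairing, smallness of the quadratic remainder — is immediate from the cited results.
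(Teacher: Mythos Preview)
Your proposal is correct and follows essentially the same approach as the paper's own proof: reduce the hypotheses on $b$ from level $k$ to level $\beta+1\le k$, invoke Theorem~\ref{ mk mk+1 pertub} with $k$ replaced by $\beta$ to get the quadratic remainder, and then argue exactly as in part~(iii) of Theorem~\ref{thm m m-one}. The paper compresses all of this into the sentence ``the rest of the proof is identical to the proof of Theorem~\ref{thm m m-one}''; you have simply unpacked that sentence, and in particular you spell out the linearity $m^{(\beta+1)}(t,\mu,\mu_1,\ldots,\mu_\beta,\mu_\eps)=\eps\,m^{(\beta+1)}(t,\mu,\mu_1,\ldots,\mu_\beta,\mu_k)$ (via condition~(iii) of the class $\tau_k$ or via uniqueness in Theorem~\ref{ thm mk lipschitz}), which the paper uses implicitly.
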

\begin{proof}
Since $\beta \leq k-1$, the condition \hintb{n+k}{k} implies \hintb{n+\beta+1}{\beta+1}. Similarly, the condition \hlipb{n+k-1}{k} implies \hlipb{n+\beta}{\beta+1}. By Theorem \ref{ mk mk+1 pertub},  we have
\begin{eqnarray}  
&& \sup_{t \in [0,T]} \Big\| m^{(\beta)}(t, \mu_{k}, \mu_1, \ldots, \mu_{\beta}) - m^{(\beta)}(t, \mu, \mu_1, \ldots, \mu_{\beta}) - m^{(\beta+1)}(t, \mu, \mu_1, \ldots, \mu_{\beta}, \mu_{k})\Big\|_{-(n+\beta+1,\infty)} \nonumber \\
& \leq & C W_1(\mu, \mu_{k})^2,  \nonumber  \end{eqnarray} for any $\mu, \mu_1, \ldots, \mu_{\beta}, \mu_{k} \in \cP(\bT^d)$, for some constant $C>0$. On the other hand, the condition  \hregphi{n+k}{k-1} implies \hregphi{n+\beta+1}{k-1}. The rest of the proof is identical to the proof of Theorem \ref{thm m m-one}. 
\end{proof}
We are now in a position to prove the main result of the paper. Clearly, one can obtain the minimal condition by setting $n=2$ (as in the introduction).
\begin{theorem} \label{main result} 
Let  $k \in \bN$ and $n \geq 2$. Assume \emph{\hregb{n+k}{k}}, \emph{\hlipb{n+k-1}{k}}, \emph{\hlipphi[k]} and \emph{\hregphi{n+k}{k}}. 
Then $\frac{\delta^k \cU}{\delta m^k}$ exists and is given by
\begin{eqnarray}
     &&    \frac{\delta^k \cU}{\delta m^k} ( t,\mu)(z_1, \ldots , z_k)  \nonumber \\
     & = &  \sum_{\Lambda = \big( \hat{n}, (\beta_j), (\alpha_{i,j}) \big) \in e(\Lambda_k)} \bigg[ \frac{ \delta^{{{\hat{n}}}} \Phi}{ \delta m^{{{\hat{n}}}}}( m(t, \mu)) \bigg( m^{(\beta_1)} \Big(t, \mu, \delta_{z_{\alpha_{1,1}}}, \ldots, \delta_{z_{\alpha_{1,\beta_1}}} \Big), \ldots, m^{(\beta_{{\hat{n}}})} \Big(t, \mu, \delta_{z_{\alpha_{{\hat{n}},1}}}, \ldots, \delta_{z_{\alpha_{{\hat{n}},\beta_{\hat{n}}}}} \Big) \bigg) \bigg].   \nonumber 
       \end{eqnarray}
        In particular, if we also assume \emph{\hintphi{n+k-1}{k}}, then
$$\sup_{z_1, \ldots, z_k \in \bT^d}  \sup_{\mu \in \cP(\bT^d)} \sup_{t \in [0,T]} \bigg|  \frac{\delta^k \cU}{\delta m^k} ( t,\mu)(z_1, \ldots , z_k) \bigg| <+ \infty. $$
\end{theorem}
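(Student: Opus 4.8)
The plan is to prove the formula by strong induction on $k$, following the same forward/backward scheme used for the first-order case (Corollary \ref{Corr existence first order}), but now pushing the iteration through the higher-order forward and backward Kolmogorov equations that have already been set up. The base case $k=1$ is exactly Corollary \ref{Corr existence first order} (note $\Lambda_1 = (1,(1),(1))$, so $e(\Lambda_1)$ gives the single term $\frac{\delta \Phi}{\delta m}(m(t,\mu))(m^{(1)}(t,\mu,\delta_{z_1}))$, which matches). For the induction step, suppose the formula holds at level $k-1$; I would fix $\mu_0 \in \cP(\bT^d)$, write $\tilde\Phi(\cdot) := $ the $(k-1)$-th derivative of $\Phi$ evaluated at $m(t,\mu_0)$ with $(z_1,\dots,z_{k-1})$ frozen, and differentiate the level-$(k-1)$ identity along the line $(1-\eps)\mu + \eps\hat\mu$ at $\eps = 0^+$.

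The key step is to differentiate, term by term over $\Lambda = (\hat n, (\beta_j),(\alpha_{i,j})) \in e(\Lambda_{k-1})$, the expression
\[
\frac{\delta^{\hat n}\Phi}{\delta m^{\hat n}}(m(t,\mu))\Bigl(m^{(\beta_1)}(t,\mu,\delta_{z_{\alpha_{1,1}}},\dots), \dots, m^{(\beta_{\hat n})}(t,\mu,\dots)\Bigr)
\]
with respect to $\mu$ in the direction $\hat\mu - \mu$. The dependence on $\mu$ enters in $\hat n + 1$ places: through the outer argument $m(t,\mu)$ of $\frac{\delta^{\hat n}\Phi}{\delta m^{\hat n}}$, and through the base measure $\mu$ in each of the $\hat n$ factors $m^{(\beta_i)}(t,\mu,\dots)$. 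Differentiating the outer slot produces a term with $\frac{\delta^{\hat n + 1}\Phi}{\delta m^{\hat n + 1}}$ paired against an extra factor $m^{(1)}(t,\mu,\delta_{z_k})$ — this uses Theorem \ref{thm m m-one}(iii) applied to the test function $\frac{\delta^{\hat n+1}\Phi}{\delta m^{\hat n+1}}$ with the other slots frozen, which is legitimate under \hregphi{n+k}{k}. Differentiating the $i$-th inner factor replaces $m^{(\beta_i)}(t,\mu,\dots)$ by $m^{(\beta_i+1)}(t,\mu,\dots,\delta_{z_k})$ via Lemma \ref{ lemma differentiation higher order } (whose hypotheses \hregb{n+k}{k}, \hlipb{n+k-1}{k}, \hregphi{n+k}{k-1} are all implied by the standing assumptions). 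Collecting these $\hat n + 1$ outcomes is precisely the definition of $Q_k(\Lambda)$ in Definition \ref{ def Qk}, so summing over $\Lambda \in e(\Lambda_{k-1})$ and applying Definition \ref{ def Lambda k} ($\Lambda_k = (Q_{k-1}(\Lambda^{(1)}_{k-1}),\dots)$) yields exactly the claimed level-$k$ formula. The point $\mu_0$ is then set equal to $\mu$, and one invokes the characterisation of linear functional derivatives (Remark 5.47 in \cite{carmona2017probabilistic}) together with the $W_1^2$-quadratic remainder estimates from Theorems \ref{thm m m-one}, \ref{ mk mk+1 pertub}, \ref{ thm vk vk+1} to conclude that the derivative genuinely exists and the interchange of $\frac{d}{d\eps}$ with the sum and integrals is valid.

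For the uniform bound, under the additional assumption \hintphi{n+k-1}{k} I would estimate each summand of the explicit formula directly: $m^{(\beta_i)}(t,\mu,\delta_{z_{\alpha_{i,1}}},\dots,\delta_{z_{\alpha_{i,\beta_i}}})$ is bounded in $(W^{n+\beta_i-1,\infty})'$ uniformly in $t, \mu$ and the points by Theorem \ref{ thm mk lipschitz}, equation \eqref{ m k bound}, using $\|\delta_z\|_{-(n,\infty)} \le 1$; and $\frac{\delta^{\hat n}\Phi}{\delta m^{\hat n}}(m(t,\mu))(\cdot)$, when tested against a tuple of measures each of dual norm $\le$ const, is bounded by \hintphi{n+k-1}{k} (noting $\beta_i \le k$ so $n + \beta_i - 1 \le n + k - 1$, which matches the order in the \text{TInt}-$\Phi$ assumption). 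Since $e(\Lambda_k)$ is a finite set depending only on $k$, the triangle inequality gives the claimed supremum bound.

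The main obstacle I anticipate is the bookkeeping: verifying that the combinatorial output of differentiating one generic $\Lambda$-term matches $Q_k(\Lambda)$ entry by entry — in particular tracking which slot gets the new index $k$ (or $k+1$ in the recursion's labelling), checking the disjointness conditions \eqref{eq: emptyset condition 2} are preserved, and making sure no term is double-counted or dropped when passing from $e(\Lambda_{k-1})$ to $e(\Lambda_k)$. A secondary technical point is justifying the differentiation of a product of $\hat n$ measure-dependent factors by a Leibniz-type rule in the Wasserstein setting; this requires the joint $W_1$-Lipschitz control of each $m^{(\beta_i)}$ in $\mu$ from \eqref{eq: W1 mk} together with the quadratic remainders, so that the cross terms in the expansion of the product are $O(W_1(\mu,\hat\mu)^2)$ and drop out in the limit. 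Both of these are essentially routine given the machinery already developed, but the index-tracking is where an error is most likely to hide.
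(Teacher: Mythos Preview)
Your proposal is essentially correct and follows the same approach as the paper: induction on $k$, with the base case given by Corollary \ref{Corr existence first order}, the formal differentiation of the level-$(k-1)$ formula via the chain rule splitting into an outer slot (producing the $\frac{\delta^{\hat n+1}\Phi}{\delta m^{\hat n+1}}$ term with an extra $m^{(1)}$) and the $\hat n$ inner slots (each handled by Lemma \ref{ lemma differentiation higher order }), matching the output to the $Q_{k-1}$ recursion, and then estimating the summands under \hintphi{n+k-1}{k} via Theorem \ref{ thm mk lipschitz}. One point where you are less explicit than the paper: after computing the directional derivative \eqref{ kth der U}, the resulting expression still involves $m^{(1)}(t,\mu,\mu_k)$ and $m^{(\beta_i+1)}(t,\mu,\dots,\mu_k)$, which depend on $\mu_k$ in a non-obvious way; to conclude that $\frac{\delta^k\cU}{\delta m^k}$ genuinely exists (and not merely the directional derivative), the paper packages the remaining factors into test functions $\Theta^{(1)},\Theta^{(2)}_{\Lambda,i}\in W^{n+k,\infty}$ and invokes Theorem \ref{thm Ij} to rewrite each such term as $\int I^{(\cdot)}(x,\mu,\dots)(\mu_k-\mu)(dx)$, after which the normalisation argument \eqref{normalisation argument part i}--\eqref{normalisation argument part ii} applies. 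Your reference to ``the forward/backward scheme'' and to Theorem \ref{ thm vk vk+1} signals you have this in mind, but it is Theorem \ref{thm Ij}, not the quadratic remainder theorems alone, that closes this gap.
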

\begin{proof}
We proceed by induction on $k$. We first prove the statement for $k=1$. By Corollary \ref{Corr existence first order}, we know that $\ld[\cU]$ exists. Therefore, by \eqref{heuristics verified}, 
\begin{equation*} \intrd \ld[\cU](t, \mu)(y) \, (\hmu- \mu)(dy) =  \frac{d}{d \eps} \bigg|_{\eps=0^{+}} \Phi \big( m(t, (1- \eps) \mu + \eps \hat{\mu}) \big) = \int_{\bT^d} \frac{\delta \Phi}{\delta m}(m(t, \mu))(y) \, m^{(1)}(t,\mu,\hat{\mu}) (dy).
         \end{equation*}
         By the normalisation convention of $\ld[\cU]$, 
        \begin{equation} \intrd \ld[\cU](t, \mu)(y) \, \hmu(dy) = \int_{\bT^d} \frac{\delta \Phi}{\delta m}(m(t, \mu))(y) \, m^{(1)}(t,\mu,\hat{\mu}) (dy).   \label{normalisation argument part i} \end{equation} 
         Therefore, putting $\hmu:= \delta_{z_1}$ gives
           \begin{equation} \ld[\cU](t, \mu)(z_1) = \int_{\bT^d} \frac{\delta \Phi}{\delta m}(m(t, \mu))(y) \, m^{(1)}(t,\mu,\delta_{z_1}) (dy).  \label{normalisation argument part ii} \end{equation}
We now assume that this statement holds for $k-1$.  Therefore, for any $\eps >0$, we have
       \begin{eqnarray}
     &&    \frac{\delta^{{k-1}} \cU}{\delta m^{k-1}} ( t,(1-\eps) \mu + \eps \mu_k)(z_1, \ldots , z_{k-1})  \nonumber \\
     & = &  \sum_{\Lambda \in e(\Lambda_{k-1})} \bigg[ \frac{ \delta^{{{\hat{n}}}} \Phi}{ \delta m^{{{\hat{n}}}}}( m(t, (1-\eps) \mu + \eps \mu_k)) \bigg( m^{(\beta_1)} \Big(t, (1-\eps) \mu + \eps \mu_k, \delta_{z_{\alpha_{1,1}}}, \ldots, \delta_{z_{\alpha_{1,\beta_1}}} \Big), \ldots, \nonumber \\
     && m^{(\beta_{{\hat{n}}})} \Big(t, (1-\eps) \mu + \eps \mu_k,  \delta_{z_{\alpha_{{\hat{n}},1}}}, \ldots, 
     \delta_{z_{\alpha_{{\hat{n}},\beta_{\hat{n}}}}} \Big) \bigg) \bigg].   \nonumber 
       \end{eqnarray}
       By the chain rule of differentiation,
       \begin{eqnarray}
      &&  \frac{d}{d  \eps} \bigg[  \frac{\delta^{{k-1}} \cU}{\delta m^{k-1}} ( t,(1-\eps) \mu + \eps \mu_k)(z_1, \ldots , z_{k-1}) \bigg] \nonumber \\
      & = &  \sum_{\Lambda \in e( \Lambda_{k-1})} \bigg( \frac{d}{d  \eps} \bigg[ \frac{ \delta^{{{\hat{n}}}} \Phi}{ \delta m^{{{\hat{n}}}}}( m(t, (1-\eps) \mu + \eps \mu_k)) \bigg( m^{(\beta_1)} \Big(t, (1-\hat{\eps}) \mu + \hat{\eps} \mu_k, \delta_{z_{\alpha_{1,1}}}, \ldots, \delta_{z_{\alpha_{1,\beta_1}}} \Big), \ldots, \nonumber \\
     && m^{(\beta_{{\hat{n}}})} \Big(t, (1-\hat{\eps}) \mu + \hat{\eps} \mu_k,  \delta_{z_{\alpha_{{\hat{n}},1}}}, \ldots, 
     \delta_{z_{\alpha_{{\hat{n}},\beta_{\hat{n}}}}} \Big) \bigg) \bigg] \bigg) \bigg|_{\hat{\eps}= \eps}    \nonumber \\
     && + \sum_{\Lambda \in e( \Lambda_{k-1})} \sum_{i=1}^{\hat{n}}  \bigg( \frac{d}{d  \eps} \bigg[ \frac{ \delta^{{{\hat{n}}}} \Phi}{ \delta m^{{{\hat{n}}}}}( m(t, (1-\hat{\eps}) \mu + \hat{\eps} \mu_k)) \bigg( m^{(\beta_1)} \Big(t, (1-\hat{\eps}) \mu + \hat{\eps} \mu_k, \delta_{z_{\alpha_{1,1}}}, \ldots, \delta_{z_{\alpha_{1,\beta_1}}} \Big), \ldots, \nonumber \\
     && m^{(\beta_{{i-1}})} \Big(t, (1-\hat{\eps}) \mu + \hat{\eps} \mu_k,  \delta_{z_{\alpha_{{{i-1}},1}}}, \ldots, 
     \delta_{z_{\alpha_{{{i-1}},\beta_{i-1}}}} \Big), m^{(\beta_{{i}})} \Big(t, (1-{\eps}) \mu + {\eps} \mu_k,  \delta_{z_{\alpha_{{{i}},1}}}, \ldots, 
     \delta_{z_{\alpha_{{{i}},\beta_{i}}}} \Big), \nonumber \\
     && m^{(\beta_{{i+1}})} \Big(t, (1-\hat{\eps}) \mu + \hat{\eps} \mu_k,  \delta_{z_{\alpha_{{{i+1}},1}}}, \ldots, 
     \delta_{z_{\alpha_{{{i+1}},\beta_{i+1}}}} \Big), \ldots, m^{(\beta_{{\hat{n}}})} \Big(t, (1-\hat{\eps}) \mu + \hat{\eps} \mu_k,  \delta_{z_{\alpha_{{\hat{n}},1}}}, \ldots, 
     \delta_{z_{\alpha_{{\hat{n}},\beta_{\hat{n}}}}} \Big) \bigg) \bigg] \bigg) \bigg|_{\hat{\eps}= \eps}.    \nonumber \\ 
       \end{eqnarray}
       Therefore, the right-hand derivative at $\eps =0$ exists and is given by
       \begin{eqnarray}
      &&  \frac{d}{d  \eps} \bigg|_{\eps=0^{+}} \bigg[  \frac{\delta^{{k-1}} \cU}{\delta m^{k-1}} ( t,(1-\eps) \mu + \eps \mu_k)(z_1, \ldots , z_{k-1}) \bigg] \nonumber \\
      & = &  \sum_{\Lambda \in e( \Lambda_{k-1})} \bigg( \frac{d}{d  \eps}\bigg|_{\eps=0^{+}}  \bigg[ \frac{ \delta^{{{\hat{n}}}} \Phi}{ \delta m^{{{\hat{n}}}}}( m(t, (1-\eps) \mu + \eps \mu_k)) \bigg( m^{(\beta_1)} \Big(t, (1-\hat{\eps}) \mu + \hat{\eps} \mu_k, \delta_{z_{\alpha_{1,1}}}, \ldots, \delta_{z_{\alpha_{1,\beta_1}}} \Big), \ldots, \nonumber \\
     && m^{(\beta_{{\hat{n}}})} \Big(t, (1-\hat{\eps}) \mu + \hat{\eps} \mu_k,  \delta_{z_{\alpha_{{\hat{n}},1}}}, \ldots, 
     \delta_{z_{\alpha_{{\hat{n}},\beta_{\hat{n}}}}} \Big) \bigg) \bigg] \bigg) \bigg|_{\hat{\eps}= 0}    \nonumber \\
     && + \sum_{\Lambda \in e( \Lambda_{k-1})} \sum_{i=1}^{\hat{n}}  \bigg( \frac{d}{d  \eps} \bigg|_{\eps=0^{+}} \bigg[ \frac{ \delta^{{{\hat{n}}}} \Phi}{ \delta m^{{{\hat{n}}}}}( m(t, (1-\hat{\eps}) \mu + \hat{\eps} \mu_k)) \bigg( m^{(\beta_1)} \Big(t, (1-\hat{\eps}) \mu + \hat{\eps} \mu_k, \delta_{z_{\alpha_{1,1}}}, \ldots, \delta_{z_{\alpha_{1,\beta_1}}} \Big), \ldots, \nonumber \\
     && m^{(\beta_{{i-1}})} \Big(t, (1-\hat{\eps}) \mu + \hat{\eps} \mu_k,  \delta_{z_{\alpha_{{{i-1}},1}}}, \ldots, 
     \delta_{z_{\alpha_{{{i-1}},\beta_{i-1}}}} \Big), m^{(\beta_{{i}})} \Big(t, (1-{\eps}) \mu + {\eps} \mu_k,  \delta_{z_{\alpha_{{{i}},1}}}, \ldots, 
     \delta_{z_{\alpha_{{{i}},\beta_{i}}}} \Big), \nonumber \\
     && m^{(\beta_{{i+1}})} \Big(t, (1-\hat{\eps}) \mu + \hat{\eps} \mu_k,  \delta_{z_{\alpha_{{{i+1}},1}}}, \ldots, 
     \delta_{z_{\alpha_{{{i+1}},\beta_{i+1}}}} \Big), \ldots, m^{(\beta_{{\hat{n}}})} \Big(t, (1-\hat{\eps}) \mu + \hat{\eps} \mu_k,  \delta_{z_{\alpha_{{\hat{n}},1}}}, \ldots, 
     \delta_{z_{\alpha_{{\hat{n}},\beta_{\hat{n}}}}} \Big) \bigg) \bigg] \bigg) \bigg|_{\hat{\eps}= 0}. \nonumber \\ &&    \label{right hand derivative long expression}  
       \end{eqnarray}
      By  the assumptions  \hregphi{n+k}{k} (which implies \hregphi{n}{k-1}) and \hlipphi[k], we can repeat the same argument as in Theorem \ref{thm m m-one}. For any signed measures $m_1, \ldots, m_{\hat{n}}$ and $\hat{n} \leq k-1$, 
      \begin{eqnarray}
        &&    \frac{d}{d  \eps} \bigg|_{\eps=0^{+}} \bigg[  \frac{\delta^{\hat{n}} \Phi}{\delta m^{\hat{n}}} (m(t,(1-{\eps}) \mu + {\eps} \mu_k))(m_1, \ldots, m_{\hat{n}}) \bigg]  \nonumber \\
        & = & \frac{d}{d  \eps} \bigg|_{\eps=0^{+}} \bigg[ \intrd \ldots \intrd \frac{\delta^{\hat{n}} \Phi}{\delta m^{\hat{n}}} (m(t,(1-{\eps}) \mu + {\eps} \mu_k)) (y_1, \ldots, y_{\hat{n}}) \, m_1(dy_1) \ldots \, m_{\hat{n}} (dy_{\hat{n}}) \bigg] \nonumber \\
        & = & \intrd \ldots \intrd  \intrd  \frac{\delta^{\hat{n}+1} \Phi}{\delta m^{\hat{n}+1}} (m(t,\mu)) (y_1, \ldots, y_{\hat{n}},y_{\hat{n}+1}) (m^{(1)}(t, \mu, \mu_k))(dy_{\hat{n}+1}) \, m_1(dy_1) \ldots \, m_{\hat{n}} (dy_{\hat{n}}) \nonumber \\
        & = & \frac{\delta^{\hat{n}+1} \Phi}{\delta m^{\hat{n}+1}} (m(t,\mu)) \Big( m_1, \ldots, m_{\hat{n}} , m^{(1)}(t, \mu, \mu_k) \Big). \label{first part n hat deriv general }
      \end{eqnarray}
      The second part of \eqref{right hand derivative long expression}  can be computed by Lemma \ref{ lemma differentiation higher order }. Therefore, by \eqref{right hand derivative long expression},   \eqref{first part n hat deriv general } and Lemma \ref{ lemma differentiation higher order },  
       \begin{eqnarray}
      &&  \frac{d}{d  \eps} \bigg|_{\eps=0^{+}} \bigg[  \frac{\delta^{{k-1}} \cU}{\delta m^{k-1}} ( t,(1-\eps) \mu + \eps \mu_k)(z_1, \ldots , z_{k-1}) \bigg] \nonumber \\
      & = &  \sum_{\Lambda \in e( \Lambda_{k-1})}    \bigg[ \frac{ \delta^{{{\hat{n}+1}}} \Phi}{ \delta m^{{{\hat{n}+1}}}}( m(t, \mu)) \bigg( m^{(\beta_1)} \Big(t, \mu, \delta_{z_{\alpha_{1,1}}}, \ldots, \delta_{z_{\alpha_{1,\beta_1}}} \Big), \ldots,  m^{(\beta_{{\hat{n}}})} \Big(t, \mu,  \delta_{z_{\alpha_{{\hat{n}},1}}}, \ldots, 
     \delta_{z_{\alpha_{{\hat{n}},\beta_{\hat{n}}}}} \Big) , \nonumber \\
     && m^{(1)}(t, \mu, \mu_k) \bigg) \bigg]  \nonumber \\
     && + \sum_{\Lambda \in e( \Lambda_{k-1})} \sum_{i=1}^{\hat{n}}   \bigg[ \frac{ \delta^{{{\hat{n}}}} \Phi}{ \delta m^{{{\hat{n}}}}}( m(t, \mu)) \bigg( m^{(\beta_1)} \Big(t, \mu, \delta_{z_{\alpha_{1,1}}}, \ldots, \delta_{z_{\alpha_{1,\beta_1}}} \Big), \ldots, \nonumber \\
     && m^{(\beta_{{i-1}})} \Big(t, \mu,  \delta_{z_{\alpha_{{{i-1}},1}}}, \ldots, 
     \delta_{z_{\alpha_{{{i-1}},\beta_{i-1}}}} \Big), m^{(\beta_{{i}}+1)} \Big(t, \mu,  \delta_{z_{\alpha_{{{i}},1}}}, \ldots, 
     \delta_{z_{\alpha_{{{i}},\beta_{i}}}} , \mu_k \Big), \nonumber \\
     && m^{(\beta_{{i+1}})} \Big(t, \mu,  \delta_{z_{\alpha_{{{i+1}},1}}}, \ldots, 
     \delta_{z_{\alpha_{{{i+1}},\beta_{i+1}}}} \Big), \ldots, m^{(\beta_{{\hat{n}}})} \Big(t, \mu,   \delta_{z_{\alpha_{{\hat{n}},1}}}, \ldots, 
     \delta_{z_{\alpha_{{\hat{n}},\beta_{\hat{n}}}}} \Big) \bigg) \bigg] . \label{ kth der U}  
       \end{eqnarray}
     For each $ \Lambda \in e(\Lambda_{k-1}) $ and $i \in \{1, \ldots, \hat{n} \}$, we define functions $\Theta^{(1)}, \Theta^{(2)}_{\Lambda,i} : \bT^d \to \bR$ by 
       \begin{eqnarray}
            \Theta^{(1)}(x) & :=  &  \sum_{\Lambda \in e( \Lambda_{k-1})}   \intrd \ldots \intrd  \frac{ \delta^{{{\hat{n}+1}}} \Phi}{ \delta m^{{{\hat{n}+1}}}}( m(t, \mu))(z_1, \ldots, z_{\hat{n}},x) \nonumber \\
            &&   m^{(\beta_1)} \Big(t, \mu, \delta_{z_{\alpha_{1,1}}}, \ldots, \delta_{z_{\alpha_{1,\beta_1}}} \Big) (dz_1)  \ldots  m^{(\beta_{{\hat{n}}})} \Big(t, \mu,  \delta_{z_{\alpha_{{\hat{n}},1}}}, \ldots, 
     \delta_{z_{\alpha_{{\hat{n}},\beta_{\hat{n}}}}} \Big) (dz_{\hat{n}}) \, , \nonumber \\
     \Theta^{(2)}_{\Lambda,i} (x) & :=  & \intrd \ldots \intrd \frac{ \delta^{{{\hat{n}}}} \Phi}{ \delta m^{{{\hat{n}}}}}( m(t, \mu)) (z_1, \ldots, z_{i-1}, x, z_{i+1}, \ldots, z_{\hat{n}}) \nonumber \\
     && m^{(\beta_1)} \Big(t, \mu, \delta_{z_{\alpha_{1,1}}}, \ldots, \delta_{z_{\alpha_{1,\beta_1}}} \Big)(dz_1) \ldots m^{(\beta_{{i-1}})} \Big(t, \mu,  \delta_{z_{\alpha_{{{i-1}},1}}}, \ldots, 
     \delta_{z_{\alpha_{{{i-1}},\beta_{i-1}}}} \Big)(dz_{i-1}) \nonumber \\
     && m^{(\beta_{{i+1}})} \Big(t, \mu,  \delta_{z_{\alpha_{{{i+1}},1}}}, \ldots, 
     \delta_{z_{\alpha_{{{i+1}},\beta_{i+1}}}} \Big)(dz_{i+1}) \ldots m^{(\beta_{{\hat{n}}})} \Big(t, \mu,  \delta_{z_{\alpha_{{\hat{n}},1}}}, \ldots, 
     \delta_{z_{\alpha_{{\hat{n}},\beta_{\hat{n}}}}} \Big) (dz_{\hat{n}}). \nonumber 
       \end{eqnarray}
  By the assumption  \hregphi{n+k}{k},  it is clear that $\Theta^{(1)}, \Theta^{(2)}_{\Lambda,i} \in W^{n+k,\infty}.$ Therefore, using the notations \eqref{I1 def} and \eqref{Ij iteration}, Theorem \ref{thm Ij} implies that
  \begin{eqnarray}
       &&  \frac{d}{d  \eps} \bigg|_{\eps=0^{+}} \bigg[  \frac{\delta^{{k-1}} \cU}{\delta m^{k-1}} ( t,(1-\eps) \mu + \eps \mu_k)(z_1, \ldots , z_{k-1}) \bigg] \nonumber \\
       & = & \intrd I^{(1)} (x, \mu; \Theta^{(1)},t) \, (\mu_k-\mu)(dx) +  \sum_{\Lambda \in e( \Lambda_{k-1})} \sum_{i=1}^{\hat{n}} \intrd  I^{(\beta_i+1)} (x, \mu, \delta_{z_{\alpha_{i,1}}}, \ldots, \delta_{z_{\alpha_{i,\beta_i}}}; \Theta^{(2)}_{\Lambda,i} ,t) \, (\mu_k- \mu)(dx), \nonumber 
  \end{eqnarray}
  which shows that $\frac{\delta^{{k}} \cU}{\delta m^{k}} $ exists and is given by
  \begin{eqnarray}
       &&  \  \frac{\delta^{{k}} \cU}{\delta m^{k}} ( t,\mu)(z_1, \ldots , z_{k-1},x)  \nonumber \\
       & = &  I^{(1)} (x, \mu; \Theta^{(1)},t) +  \sum_{\Lambda \in e( \Lambda_{k-1})} \sum_{i=1}^{\hat{n}} I^{(\beta_i+1)} (x, \mu, \delta_{z_{\alpha_{i,1}}}, \ldots, \delta_{z_{\alpha_{i,\beta_i}}}; \Theta^{(2)}_{\Lambda,i} ,t). \nonumber 
  \end{eqnarray}
     By adopting the same normalisation argument as \eqref{normalisation argument part i} and \eqref{normalisation argument part ii}, formula \eqref{ kth der U} gives
  \begin{eqnarray}
      &&   \frac{\delta^{{k}} \cU}{\delta m^{k}} ( t, \mu )(z_1, \ldots , z_{k})  \nonumber \\
      & =&  \sum_{\Lambda \in e( \Lambda_{k-1})}    \bigg[ \frac{ \delta^{{{\hat{n}+1}}} \Phi}{ \delta m^{{{\hat{n}+1}}}}( m(t, \mu)) \bigg( m^{(\beta_1)} \Big(t, \mu, \delta_{z_{\alpha_{1,1}}}, \ldots, \delta_{z_{\alpha_{1,\beta_1}}} \Big), \ldots,  m^{(\beta_{{\hat{n}}})} \Big(t, \mu,  \delta_{z_{\alpha_{{\hat{n}},1}}}, \ldots, 
     \delta_{z_{\alpha_{{\hat{n}},\beta_{\hat{n}}}}} \Big) , \nonumber \\
     && m^{(1)}(t, \mu, \delta_{z_k}) \bigg) \bigg]  \nonumber \\
     && + \sum_{\Lambda \in e( \Lambda_{k-1})} \sum_{i=1}^{\hat{n}}   \bigg[ \frac{ \delta^{{{\hat{n}}}} \Phi}{ \delta m^{{{\hat{n}}}}}( m(t, \mu)) \bigg( m^{(\beta_1)} \Big(t, \mu, \delta_{z_{\alpha_{1,1}}}, \ldots, \delta_{z_{\alpha_{1,\beta_1}}} \Big), \ldots, \nonumber \\
     && m^{(\beta_{{i-1}})} \Big(t, \mu,  \delta_{z_{\alpha_{{{i-1}},1}}}, \ldots, 
     \delta_{z_{\alpha_{{{i-1}},\beta_{i-1}}}} \Big), m^{(\beta_{{i}}+1)} \Big(t, \mu,  \delta_{z_{\alpha_{{{i}},1}}}, \ldots, 
     \delta_{z_{\alpha_{{{i}},\beta_{i}}}} , \delta_{z_k} \Big), \nonumber \\
     && m^{(\beta_{{i+1}})} \Big(t, \mu,  \delta_{z_{\alpha_{{{i+1}},1}}}, \ldots, 
     \delta_{z_{\alpha_{{{i+1}},\beta_{i+1}}}} \Big), \ldots, m^{(\beta_{{\hat{n}}})} \Big(t, \mu,   \delta_{z_{\alpha_{{\hat{n}},1}}}, \ldots, 
     \delta_{z_{\alpha_{{\hat{n}},\beta_{\hat{n}}}}} \Big) \bigg) \bigg]  \nonumber  \\
     & = &  \sum_{\Lambda \in e(\Lambda_k)} \bigg[ \frac{ \delta^{{{\hat{n}}}} \Phi}{ \delta m^{{{\hat{n}}}}}( m(t, \mu)) \bigg( m^{(\beta_1)} \Big(t, \mu, \delta_{z_{\alpha_{1,1}}}, \ldots, \delta_{z_{\alpha_{1,\beta_1}}} \Big), \ldots, m^{(\beta_{{\hat{n}}})} \Big(t, \mu, \delta_{z_{\alpha_{{\hat{n}},1}}}, \ldots, \delta_{z_{\alpha_{{\hat{n}},\beta_{\hat{n}}}}} \Big) \bigg) \bigg].   \nonumber 
       \end{eqnarray}
Finally, if we also assume \hintphi{n+k-1}{k}, then by  Theorem \ref{ thm mk lipschitz}, for any $\Lambda \in e(\Lambda_k)$, 
\begin{eqnarray} && \sup_{z_1, \ldots, z_k \in \bT^d} \sup_{\mu \in \cP(\bT^d)} \sup_{t \in [0,T]} \bigg|  \frac{ \delta^{{{\hat{n}}}} \Phi}{ \delta m^{{{\hat{n}}}}}( m(t, \mu)) \bigg( m^{(\beta_1)} \Big(t, \mu, \delta_{z_{\alpha_{1,1}}}, \ldots, \delta_{z_{\alpha_{1,\beta_1}}} \Big), \ldots, m^{(\beta_{{\hat{n}}})} \Big(t, \mu, \delta_{z_{\alpha_{{\hat{n}},1}}}, \ldots, \delta_{z_{\alpha_{{\hat{n}},\beta_{\hat{n}}}}} \Big) \bigg) \bigg| \nonumber \\
& \leq & C \sup_{z_1, \ldots, z_k \in \bT^d}  \sup_{\mu \in \cP(\bT^d)}  \sup_{t \in [0,T]} \prod_{i=1}^{\hat{n}} \Big\| m^{(\beta_i)} \Big(t, \mu, \delta_{z_{\alpha_{i,1}}}, \ldots, \delta_{z_{\alpha_{i,\beta_i}}} \Big) \Big\|_{-(n+k-1,\infty)} \nonumber \\
& \leq & C \sup_{z_1, \ldots, z_k \in \bT^d}  \sup_{\mu \in \cP(\bT^d)}  \sup_{t \in [0,T]} \prod_{i=1}^{\hat{n}} \Big\| m^{(\beta_i)} \Big(t, \mu, \delta_{z_{\alpha_{i,1}}}, \ldots, \delta_{z_{\alpha_{i,\beta_i}}} \Big) \Big\|_{-(n+\beta_i-1,\infty)}  < +\infty. \nonumber 
\end{eqnarray}
       \end{proof}
       \section*{Acknowledgements}
The author is indebted to Prof. Pierre Cardaliaguet and  Dr. {\L}ukasz Szpruch for useful suggestions in various occasions, and to Prof. Fran\c{c}ois Delarue for the help in developing the proof of Proposition 2.2.

\end{document}